\def\cC{{\cal C}}
\def\cD{{\cal D}}
\def\cL{{\cal L}}
\def\cM{{\cal M}}
\def\cN{{\cal N}}
\def\cR{{\cal R}}
\def\cU{{\cal U}}
\def\be{\begin{eqnarray}}
\def\ee{\end{eqnarray}}
\def\ba{\begin{array}}
\def\ea{\end{array}}
\def\x{\times}
\def\R{\mathbb{R}}
\begin{document}

\newtheorem{lemma}{Lemma}[section]
\newtheorem{remark}{Remark}[section]
\newtheorem{example}{Example}[section]
\newtheorem{theorem}{Theorem}[section]
\newtheorem{corollary}{Corollary}[section]
\newtheorem{conjecture}{Conjecture}[section]
\newtheorem{definition}{Definition}[section]
\newtheorem{proposition}{Proposition}[section]
\newtheorem{condition}{Condition}[section]
\newtheorem{assumption}{Assumption}[section]

\numberwithin{equation}{section}

\begin{frontmatter}

\title{Discrete-time Approximation of Stochastic Optimal Control with Partial Observation}

\author{Yunzhang Li}
\ead{li\_yunzhang@fudan.edu.cn}
\address{Research Institute of Intelligent Complex Systems, Fudan University.}

\author{Xiaolu Tan}
\ead{xiaolu.tan@cuhk.edu.hk}
\address{Department of Mathematics, The Chinese University of Hong Kong.}

\author{Shanjian Tang}
\ead{sjtang@fudan.edu.cn}
\address{Department of Finance and Control Sciences, School of Mathematical Sciences, Fudan University.}

\begin{abstract}
We consider a class of stochastic optimal control problems with partial observation, and study their approximation by discrete-time control problems. We establish a convergence result by using weak convergence technique of Kushner and Dupuis [\textit{Numerical Methods for Stochastic Control Problems in Continuous Time} (2001), Springer-Verlag, New York], together with the notion of relaxed control rule introduced by El Karoui, H\.u\`u Nguyen and Jeanblanc-Picqu\'e [\textit{SIAM J. Control Optim.}, \textbf{26} (1988) 1025-1061]. In particular, with a well chosen discrete-time control system, we obtain a first implementable numerical algorithm (with convergence) for the partially observed control problem. Moreover, our discrete-time approximation result would open the door to study convergence of more general numerical approximation methods, such as machine learning based methods. Finally, we illustrate our convergence result by the numerical experiments on a partially observed control problem in a linear quadratic setting.
\end{abstract}

\begin{keyword}
Time discretization \sep stochastic optimal control \sep partial observation \sep compactification method \sep dynamic programming principle

\MSC 65K99 \sep 93E20

\end{keyword}

\end{frontmatter}

\section{Introduction}
\thispagestyle{empty}
The optimal control problem with partial observation has been introduced and studied since decades, and has broad applications in physics, engineering, economy and finance, etc. Let us refer in particular to Fleming \cite{Fleming}, Bensoussan \cite{MR1191160}, and Pardoux \cite{MR705935} among many other pioneer works on the subject.

\vspace{0.5em}

	In this paper, we study a class of partially observed control problems as follows.
	Let $(\Omega,\mathscr{F},\mathbb{P})$ be a complete probability space, equipped with the filtration $\{\mathscr{F}_t\}_{0\leq t \leq T}$, 
	and a standard $\mathbb{R}^m \times \mathbb{R}^k$-valued Brownian motion $(B,W)$,
	$A$ be a compact separable metric space with distance $d_A$.
	Given a $A$-valued progressively measurable control process $u = (u_s)_{s \in [0,T]}$,
	and with the initial condition $x_0 \in \mathbb{R}^d$, 
	and the coefficient functions $\left(b,\sigma\right): [0,T]\times \mathbb{R}^d \times A \longrightarrow \mathbb{R}^d \x \R^{(d\times m)}$ and $p: [0,T]\times \mathbb{R}^d \longrightarrow \mathbb{R}^{k}$, the signal process $X^u$ and the observation process $Y$ in the system are given by
\begin{subequations}\label{strict}
\begin{numcases}{}
\displaystyle \label{strict1} X_t^u = x_0 + \int_0^t b \left( s,X_s^u,u_s \right) ds + \int_0^t \sigma \left( s,X_s^u,u_s \right) dB_s, \\
\displaystyle \label{strict2} Y_t= \int_0^t p \left( s,X_s^u \right)ds + W_t.
\end{numcases}
\end{subequations}
	In the partially observed control problem, an admissible control process $u$ is required to be progressively measurable w.r.t. the observation filtration $\mathbb{F}^Y = (\mathscr{F}^Y_t)_{t \in [0,T]}$ generated by $Y$, i.e. $\mathscr{F}_t^Y:=\sigma(Y_s,\,s\leq t)$.
	Namely, the information available to the controller at time $t$ is the observations $\{Y_s,\,s\leq t\}$.
	Let us denote by $\cU$ the collection of all $A$-valued $\mathbb{F}^Y$-progressively measurable processes,
	we shall consider the following optimal control problem under partial observation:
	\begin{equation}\label{introVS}
		V =\sup_{u\in\cU} J(u), 
		~\mbox{with}~
		J(u):= \mathbb{E}^\mathbb{P} \left[ \int_0^T K \left(s,X_s^u,u_s\right)ds + G\left(X^u_T\right) \right].
	\end{equation}

	The formulation in~\eqref{strict} is somehow ill-posed. 
	In fact, the observation process $Y$ in \eqref{strict2} is defined with a given control process $u$,
	at the same time, the admissible control process $u$ is required to be adapted to the filtration $\mathbb{F}^Y$ generated by $Y$.
	A classical way to solve the problem is the so-called reference probability approach.
	Concretely, let us introduce the reference probability $\mathbb{Q}$ in spirit of Zakai transformation 
	by 
	\begin{equation}\label{Ltu}
		\frac{d \mathbb{Q}}{d \mathbb{P}} := \left( L^u_T \right)^{-1},
		~\mbox{with}~
		L_t^u :=\exp\left[ \int_0^t p(s,X_s^u) d Y_s - \frac{1}{2} \int_0^t \left| p(s,X_s^u) \right|^2 d  s \right] .
	\end{equation}
	Then by Girsanov theorem,
	under the new probability measure $\mathbb{Q}$, $(B,Y)$ is a standard $\mathbb{R}^{m+k}$-valued Brownian motion.
	An admissible control $u \in \cU$ is defined as a $A$-valued process, progressively measurable w.r.t. the filtration generated by the Brownian motion $Y$.
	More importantly, an equivalent formulation of the optimal control under partial observation \eqref{introVS} is
	\begin{equation}\label{Ju1}
		V =\sup_{u\in\cU} J(u), 
		~\mbox{with}~
		J(u)= \mathbb{E}^{\mathbb{Q}} \left[ L_T^u \left(\int_0^T K(s,X_s^u,u_s)\,ds + G(X^u_T)\right) \right].
	\end{equation}

	Based on the formulation \eqref{Ju1}, different approaches have been applied to study the partially observed control problem.
	A first important approach is the Pontryagin’s maximum principle, which provides a first order necessary condition of the optimal control as well as the optimally controlled process by a coupled forward-backward system, see e.g. Bensoussan \cite{MR1191160}, Haussmann \cite{Haussmann}, Li and Tang \cite{MR1363354}, and Tang \cite{MR1626880}, etc.
	 A second important approach is the dynamic programming method, which leads to a characterization of the value function by a Hamilton-Jacobi-Bellman (HJB) equation.
	 Since the signal process $X^u$ is not $\mathbb{F}^Y$-adapted, one needs to take the conditional distribution of $X^u_t$ knowing $Y$ as underlying process to deduce the dynamic programming.
	 In the early literature, one usually assumes the existence of the density function of the conditional distribution of $X^u_t$ knowing $Y$, which can then be described by a controlled stochastic PDE, and the corresponding value function is a (viscosity) solution to an infinitely dimensional HJB equation, see e.g. Lions \cite{MR1019600}, Gozzi and Swiech \cite{Gozzi2000}. 
	More recently, without the density assumption, Bandini, Cosso, Fuhrman and Pham \cite{MR3907014} established the dynamic programming principle and obtains a novel master type HJB equation. Further, in~\cite{MR3809474}, they use the randomization technique to obtain a dual BSDE characterization. 
	We mention in particular the compacification approach developed by El Karoui, Nguyen and Jeanblanc \cite{MR957652}, where a notion of relaxed control rules has been introduced for the partial observation problem,
	which will be recalled and essentially used in our paper.
	
	\vspace{0.5em}
	
	Very few studies are given on the numerical approximation method for partially observed control problems in the literature. In Bensoussan and Runggaldier \cite{MR913557}, the authors considered a problem where only the drift coefficient $b$ is controlled, and studied the time discretization as well as the space discretization, in order to construct  an $\epsilon$-optimal control. It is not clear that the discretized scheme can be easily implemented.
	In Archibald, Bao, Yong and Zhou \cite{archibald2020efficient}, the authors introduced a numerical algorithm for data driven feedback control problem (including the partially observed control problem).
	It is based on a direct computation on the G\^ateaux derivative of the cost function, together with a simple time discretization. However, it stays as heuristic as the error analysis due to time discretization has not been discussed in the paper.

	\vspace{0.5em}
	
	In this paper, we will study the approximation methods for a class of optimal control problems with partial observation based on the formulation in \eqref{Ju1}.
	We follow the main idea of Kushner and Dupuis \cite{MR1800098} to consider a sequence of discrete-time controlled systems.
	Under appropriate conditions, and by considering the martingale problem formulation in \cite{MR957652},
	we deduce that the discrete-time controls converge to a relaxed control rule.
	In contrast to \cite{MR1800098} which considers the finite difference approximation method with a locally consistent approximating controlled Markov chain for a standard optimal control problem,
	we consider a general class of controlled discrete-time systems in order to approximate our optimal control problem under partial observation.
	We investigate the problem to obtain appropriate conditions and prove a general convergence result.
	In particular, when the discrete-time system is chosen to be a locally consistent approximating controlled Markov chain,
	so that both time and space are discretized,
	it leads to an implementable numerical approximation method with dynamic programming principle on the discrete-time system.
	Our general convergence result will also lead to the convergence result of this numerical approximation algorithm.
	We illustrate this by some numerical experiments for a partially observed control problem in a linear-quadratic setting.
	For more general discrete-time system, it will lead to a high dimensional (but discrete-time) control problem, and a potential approximation method would be the machine learning based method as suggested in Han and E \cite{han2016deep}, as well as many further development, see e.g. Han, Jentzen and E \cite{han2018solving}.
	Notice that, in the literature of the machine learning based numerical methods for the optimal control problem (or HJB equations), the formulation of the problem is usually in continuous time, but the algorithm is in discrete-time, and the error analysis due to time discretization is generally omitted.
	Our convergence result would open a door for the convergence analysis of the corresponding methods.

	\vspace{0.5em}

	The rest of the paper is organized as follows. In Section~\ref{sectionconvergenceresult}, we first give the assumptions on the coefficients of the controlled system and the reward functions. 
	We then introduce the time discretization scheme for the partially observable control problem and provide our main convergence result. 
	In Section~\ref{sectionFDtest}, we present an implementable numerical scheme and perform some numerical experiment for a linear-quadratic partially observed control problem. 
	In Section~\ref{sectionformulation}, we revisit El Karoui, Nguyen and Jeanblanc \cite{MR957652},
	and recall the strong, weak, and relaxed formulations of the partially observable control problem, as well as the corresponding equivalence result.
	Finally, in Section~\ref{sectionproofconvergence}, we provide the proof of our main convergence theorem.

	\vspace{0.5em}

Throughout the paper, by saying that a vector-valued or matrix-valued function belongs to a function space, we mean all the components belong to that space. The norm of a $d_1 \times d_2$ matrix $y$ is given by $|y|:= \sqrt{\text{Tr}(y y^{\mathrm{T}})}$. By $C>0$,  we denote a generic constant, which in particular does not depend on the discretization time step $h$ and possibly changes from line to line. When there is no ambiguity, we omit the argument $\omega$ in the proofs for simplicity of notations.

\section{Discrete-time approximation of the partially observed stochastic optimal control problem}\label{sectionconvergenceresult}

	In this section, we introduce our discrete-time scheme and the main convergence theorem. 
	We will start with the formulation \eqref{Ju1} for the optimal control problem with partial observation.
	Concretely, we consider the probability space $(\Omega, \mathcal{F}, \mathbb{Q})$, 
	in which $(B, Y)$ is a standard Brownian motion. Recall that $A$ is a compact separable metric space with distance $d_A$.
	Let $\cU$ denote the set of all admissible control processes $u$, i.e. $u$ is $A$-valued and progressively measurable w.r.t. the filtration $\mathbb{F}^Y$ generated by the Brownian motion $Y$. 
	Given $u \in \cU$, the process $X^u$ is defined by
	\begin{equation} \label{eq:dynamicXu}
		X_t^u = x_0 + \int_0^t b \left( s,X_s^u,u_s \right) ds + \int_0^t \sigma \left( s,X_s^u,u_s \right) dB_s,
		\quad t \in [0,T],
		\quad \mathbb{Q} \mbox{-a.s.}
	\end{equation}
	and the optimal control under partial observation is given by \eqref{Ju1}, i.e.
	\begin{equation} \label{eq:defV_main}
		V =\sup_{u\in\cU} J(u), 
		~\mbox{with}~
		J(u) := \mathbb{E}^{\mathbb{Q}} \left[ L_T^u \left(\int_0^T K(s,X_s^u,u_s)\,ds + G(X^u_T)\right) \right],
	\end{equation}
	where $L^u$ is defined in \eqref{Ltu}.
	To ensure that $X^u$ in \eqref{eq:dynamicXu} is well defined, and also for subsequent convergence analysis, 
	let us formulate the following conditions on the coefficients.

\begin{enumerate}
\renewcommand{\labelenumi}{($\mathbf{A1}$)}
\item
For the controlled dynamic, there exists a constant $C_1>0$ together with a continuity module $\rho$ such that the coefficients $\left(b,\sigma\right): [0,T]\times \mathbb{R}^d \times A \longrightarrow \mathbb{R}^{d\times (d\times m)}$ and $p: [0,T]\times \mathbb{R}^d \longrightarrow \mathbb{R}^{k}$ satisfy the uniform boundedness condition
$$
\left| b\left( t,x,a \right) \right| + \left| \sigma \left( t,x,a \right) \right| + \left| p \left(t,x\right) \right|\leq C_1,
$$
as well as the uniform continuity condition
\be\nonumber
\left| \left( b,\sigma \right) \left( t_1,x_1,a_1 \right) - \left( b,\sigma \right) \left(t_2,x_2,a_2 \right)\right| + \left| p \left(t_1,x_1 \right) - p \left(t_2,x_2\right) \right| \leq\rho\left( \left|t_1-t_2\right| \right) + C_1\left|x_1-x_2\right| + d_A(a_1, a_2) ,
\ee
for all $\left(t,t_1,t_2,x,x_1,x_2,a,a_1,a_2\right)\in [0,T]^3 \times \left( \mathbb{R}^d \right)^3 \times A^3$.

\renewcommand{\labelenumi}{($\mathbf{A2}$)}
\item
The instantaneous reward function $K : [0,T]\times \mathbb{R}^d \times A \longrightarrow \mathbb{R}$ and the terminal reward function $ G : \mathbb{R}^d \longrightarrow \mathbb{R}$ are continuous, and have the exponential growth for a constant $C_2 > 0$:
\begin{equation}\nonumber
\left|K\left(t,x,a\right)\right| + \left|G\left(x\right)\right|\leq C_2 \left( 1+ e^{C_2\left|x\right|}\right).
\end{equation}

\end{enumerate}

\subsection{Discrete-time approximation schemes}

	Based on the partially observable control problem~\eqref{eq:dynamicXu} and~\eqref{eq:defV_main}, 
	we consider the following discrete-time approximation schemes.

	\vspace{0.5em}

	For each $n\ge 1$, let us denote $h:=T/n$, and $t_k:= kh$, $\mathbb{T}_h := \left\{ t_0, t_1, \cdots , t_n \right\}$.
	On a fixed probability space $(\Omega^h,\mathscr{F}^h,\mathbb{Q}^h)$,
	equipped with some independent random variables $\left\{\eta^h_{i}, U_{i}^h \right\}_{1 \leq i \leq n}$,
	we introduce the discrete-time filtration 
	\begin{equation}\label{defscrFh}
		\mathbb{F}^h = \left\{ \mathscr{F}_{i}^h := \sigma \left( \eta^h_{m} \,,\, U_{m}^h : 1 \leq m \leq i \right) \right\}_{0 \leq i \leq n}.
	\end{equation}
	In above, $\eta^h_i$ is a $\mathbb{R}^k$-valued random variable, $U^h_i$ is a $[0,1]$-valued random variable with uniform distribution.
	Moreover, $\eta^h_i$ is independent of $U^h_i$.
	Namely, $(\eta^h_i)_{1 \le i \le n}$ will be used to define discrete-time observation process process $Y^h$,
	and $(U^h_i)_{1 \le i \le n}$ will be used to defined discrete-time controlled process $X^h$.

\subsubsection{Discrete observation $Y^h$, control $u^h$ and signal $X^{h,u^h}$}

	For each $h > 0$, let us define $Y^h$ by
	\begin{equation}\label{defYhi}
		Y^h_0=0, \quad\quad Y_{i+1}^h := Y_i^h + \eta^h_{i+1}, \quad i = 0, 1, \cdots, n-1.
	\end{equation}
	Then a discrete-time $A$-valued control process $\{u^h_i\}_{0\leq i \leq n}$ is admissible if it is adapted to the filtration of $Y^h$, i.e. 
	$u^h_i \in \sigma(Y^h_0, \cdots, Y^h_i)$, for each $i = 0, \cdots, n$.
	Let us denote by $\cU_h$ the collection of all admissible discrete-time control processes with parameter $h > 0$.

	Next, given a kernel function $H_h: \mathbb{T}_h \times \mathbb{R}^d \times A \times [0,1] \rightarrow \mathbb{R}^d $, we define the discrete-time signal process $\{X^{h,u^h}_i\}_{0\leq i \leq n}$ by
	\begin{equation}\label{defXhuhi}
		X_{0}^{h,{u^h}}=x_0,
		\quad
		X_{i+1}^{h,{u^h}} := X_i^{h,{u^h}} + H_h \left( t_i, X_i^{h,{u^h}}, u^h_i, U_{i+1}^h \right) ,
		\quad  i = 0, \cdots, n-1.
	\end{equation}

\begin{assumption}\label{definitionYhi}

	There exist constants $C > 0$, $C(c) > 0$ for each $c > 0$, independent of $h$, such that the following holds.

	\noindent $\mathrm{(i)}$ For each $i =  1, \cdots, n$, the variable $\eta^h_i$ satisfies
	\begin{equation}\label{propertyetahuh}
		\mathbb{E}^{\mathbb{Q}^h} \left[ \eta^h_{i} \right] = 0,
		\quad\quad 
		\mathrm{Var}^{\mathbb{Q}^h} \left[ \eta^h_{i} \right] = h I_k,
		\quad\quad 
		\mathbb{E}^{\mathbb{Q}^h} \left[ \,\left| \eta^h_{i} \right|^3 \right] \leq C h^\frac{3}{2}.
	\end{equation}

	\noindent $\mathrm{(ii)}$ For each $\left( t_i, x, a \right) \in \mathbb{T}_h \times \mathbb{R}^d \times A$,
	let $U$ be a random variable in $(\Omega^h,\mathscr{F}^h,\mathbb{Q}^h)$ with uniform distribution on $[0,1]$, one has
\begin{subequations}\label{propertyxihuh}
\begin{numcases}{}
\displaystyle \label{xih1}
\quad \mathbb{E}^{\mathbb{Q}^h} \left[ H_h \left( t_i, x, a, U \right)  \right] = b\left(t_i,x,a\right)h, \\
\displaystyle \label{xih2}
\quad \mathrm{Var}^{\mathbb{Q}^h} \left[ H_h \left( t_i, x, a, U \right)  \right] = \sigma \sigma^\mathrm{T} \left( t_i,x,a\right) h, \\
\displaystyle \label{xih3}
\quad \mathbb{E}^{\mathbb{Q}^h} \left[ \left| H_h \left( t_i,x,a,U\right) \right|^3 \right] \leq C h^\frac{3}{2},\\
\displaystyle \label{xih4}
\quad \mathbb{E}^{\mathbb{Q}^h} \left[ e^{c \left| H_h \left( t_i,x,a,U\right) \right| } \right] \leq 1 + C(c)\, h ,\quad \mbox{for all}~ c>0.
\end{numcases}
\end{subequations}

\end{assumption}

We give two simple settings for the discrete observation and signal process as follows.
\begin{example}\label{example12}
In the one-dimensional case $d=k=1$, the kernel function $H_h$ and the random variables $\left\{ \eta_i^h \right\}_{1\leq i \leq n}$ can be chosen such that
$$
H_h \left( t_i, x, a, U^h_{i+1} \right) \cong b \left( t_i, x, a\right) h + \sigma \left( t_i, x, a\right) \sqrt{h} \varUpsilon_{i+1}^{h,2}, \quad\quad\quad \eta^h_{i+1} \cong \sqrt{h}\,\varUpsilon_{i+1}^{h,1},
$$
where ``$\cong$'' means that the two random variables have same distribution, and the random variables $\left\{ \varUpsilon_{i}^{h,1} \,,\, \varUpsilon_{i}^{h,2} \right\}_{1\leq i \leq n}$ are independent random variables on $(\Omega^h,\mathscr{F}^h,\mathbb{Q}^h)$ such that 
\begin{equation}\nonumber
\mathbb{Q}^h\left(\varUpsilon_{i}^{h,1} = \pm 1\right) = \mathbb{Q}^h \left(\varUpsilon_{i}^{h,2} = \pm 1\right) =\frac{1}{2}.
\end{equation}
Then one can verify that the conditions~\eqref{propertyetahuh} and~\eqref{propertyxihuh} are satisfied. 

Alternatively, we can also choose the random variables $\left\{ \varUpsilon_{i}^{h,1} \,,\, \varUpsilon_{i}^{h,2} \right\}_{1\leq i \leq n}$ as i.i.d. $\cN(0,1)$-distributed Gaussian random variables on $(\Omega^h,\mathscr{F}^h,\mathbb{Q}^h)$.
\end{example}

\begin{example}\label{example123}
	Let $d=k=1$, and the kernel function $H_h$ satisfy
	\be
		\quad\quad H_h \left( t_i, x_k , a, U \right) = \left\{
		\ba{rcl}
		\displaystyle \Delta x , &&\text{with probability $p^{i,k,a}_+$;  }\\[0.3cm]
		\displaystyle -\Delta x , &&\text{with probability $p^{i,k,a}_- $;  }\\[0.3cm]
		\displaystyle 0, &&\text{with probability $1- p^{i,k,a}_+ - p^{i,k,a}_-$  ,}\ea
		\right.\label{defHh}
	\ee
	where
	$$
	p^{i,k,a}_\pm := \frac{1}{2} \left( \left( \sigma^2 + h b^2 \right) \left( t_i, x_k ,a\right) \frac{h}{\Delta x^2} \pm b \left( t_i, x_k ,a\right) \frac{h}{\Delta x} \right).
	$$	
	Setting $h=C^o \Delta x^2$ with $C^o := 1 / \left( \left\| b \right\|_\infty^2 + \left\| \sigma \right\|_\infty^2 \right)$,  we have that $1- p^{i,k,a}_+ - p^{i,k,a}_- \geq 0$. One can easily verify that $H_h$ defined in~\eqref{defHh} satisfy the conditions~\eqref{propertyxihuh}. 

	Notice that, with the above kernel function $H_h$, the discrete signal $X^{h,u^h}$ takes value in a countable set $\mathbb{Z}_h := \left\{ x_k = k \Delta x \right\}_{k\in \mathbb{Z}}$.
	In particular, this would induce an implementable numerical scheme for the approximation of the initial continuous time control problem (see more details in Section \ref{sectionFDtest}).

\end{example}

\subsubsection{Discrete-time stochastic optimal control problems with partial observation}

	We now introduce a sequence of discrete-time control problem, to approximate the value function $V$ in~\eqref{eq:defV_main}. 
	By~\eqref{Ltu}, let us first introduce the discrete Radon-Nikodym derivative.
	Recall that the Radon-Nikodym derivative $L^u$ is an exponential martingale and satisfies 
	\begin{equation}\label{Ltu1}
		L_t^u = 1+ \int_0^t p \left(s,{X}_s^{u}\right) {L}_s^{u}\, d Y_s.
	\end{equation}
	Let us define $L^{h,u^h}$ by the Euler type scheme for~\eqref{Ltu1}:
	\begin{equation}\label{Lhuhi}
		L^{h,u^h}_{0}=1,
		\quad\quad 
		L^{h,u^h}_{{i+1}} = L^{h,u^h}_{{i}} + L^{h,u^h}_{{i}} p \left( t_i,X_i^{h,{u^h}} \right) \eta^h_{i+1}, 
		\quad i = 0, 1, \cdots, n-1.
	\end{equation}

Then we use the above discrete processes~\eqref{defYhi},~\eqref{defXhuhi} and~\eqref{Lhuhi} to define the discrete-time reward functional
\be\label{defJhuh}
J_h\left(u^h\right) := \mathbb{E}^{\mathbb{Q}^h} \left[ L^{h,u^h}_n \left(\sum_{i=0}^{n-1} K \left( t_i,X_i^{h,{u^h}},u_i^h \right) h + G \left(X_n^{h,{u^h}} \right) \right) \right],
\ee
and define the corresponding value
\begin{equation}\label{defVh}
V_h=\sup_{u^h\in\cU_h} J_h \left(u^h\right).
\end{equation}

\subsection{Main convergence theorem}

	Let us now provide the main convergence result of the paper, whose proof will be reported in Section~\ref{sectionproofconvergence}.

	\begin{theorem}\label{ConvergenceTheorem}
		Let the assumptions $(\mathbf{A1})$ - $(\mathbf{A2})$, and Assumption \ref{definitionYhi} hold true. Then we have
		$$
			\lim_{h\rightarrow 0} V_h = V.
		$$
	\end{theorem}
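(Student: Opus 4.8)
The strategy is to prove the two one-sided bounds $\limsup_{h\to0}V_h\le V$ and $\liminf_{h\to0}V_h\ge V$ separately, following the weak convergence method of Kushner and Dupuis together with the relaxed (compactified) formulation of the partially observed problem recalled in Section~\ref{sectionformulation}. The two facts I would borrow from there are that $V$ coincides with the value computed over relaxed control rules, and that this value can be approximated by strict controls $u\in\cU$ that are continuous functionals of the observation path. Throughout I would work on the canonical space $\Omega^\ast:=C([0,T];\R^d)\times C([0,T];\R^k)\times C([0,T];\R)\times\cV$, where $\cV$ is the compact Polish space of measures $m$ on $[0,T]\times A$ whose time marginal is the Lebesgue measure on $[0,T]$; a relaxed control rule is a probability $\mathbb{P}^\ast$ on $\Omega^\ast$ under which the coordinate processes $(X,Y,L,m)$ solve the relaxed martingale problem attached to \eqref{eq:dynamicXu}--\eqref{Ltu1} and satisfy the partial-observation compatibility constraint tying $m$ to $Y$.

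\textbf{Upper bound.} Fix $h=h_N\downarrow0$ and choose $u^h\in\cU_h$ with $J_h(u^h)\ge V_h-h$. From \eqref{defYhi}, \eqref{defXhuhi}, \eqref{Lhuhi} build the interpolated processes $Y^h,X^h,L^h$ on $[0,T]$ and the random occupation measure $m^h(dt,da):=\sum_{i=0}^{n-1}\mathbbm{1}_{[t_i,t_{i+1})}(t)\,dt\,\delta_{u^h_i}(da)\in\cV$, and regard $(X^h,Y^h,L^h,m^h)$ as a random element of $\Omega^\ast$. The first step is tightness of its laws: the $\cV$-marginal is automatically tight because $A$ is compact; tightness of $X^h$ and $Y^h$ follows from the moment estimates \eqref{propertyetahuh} and \eqref{xih1}--\eqref{xih3} via a Kolmogorov/Aldous-type criterion; and tightness of $L^h$ follows from its discrete martingale structure together with a uniform $L^{1+\epsilon}$-bound on $|L^h_n|$ obtained from the third-moment estimate in \eqref{propertyetahuh} and the boundedness of $p$. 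Passing to a subsequence, $(X^h,Y^h,L^h,m^h)\Rightarrow(X,Y,L,m)$ with some law $\mathbb{P}^\ast$ on $\Omega^\ast$. The main step is then to identify $\mathbb{P}^\ast$ as a relaxed control rule: (a) $Y$ is a $k$-dimensional Brownian motion under $\mathbb{P}^\ast$, by Donsker's invariance principle applied to the increments $\eta^h$ using \eqref{propertyetahuh}; (b) the relaxed martingale problem for $(X,L)$ holds, obtained by writing the discrete Dynkin formulae for the recursions \eqref{defXhuhi} and \eqref{Lhuhi}, matching the drift against $\int_0^t\!\int_A b(s,X_s,a)\,m(ds,da)$ and the covariation against $\int_0^t\!\int_A\sigma\sigma^{\mathrm T}(s,X_s,a)\,m(ds,da)$ by \eqref{xih1}--\eqref{xih2}, and sending the higher-order remainders to $0$ by \eqref{xih3}; (c) the partial-observation constraint, inherited in the limit from the discrete adaptedness $u^h_i\in\sigma(Y^h_0,\dots,Y^h_i)$ and the independence of $(\eta^h_{i+1},U^h_{i+1})$ from $\mathscr{F}^h_i$. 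Finally, combining $(\mathbf{A2})$ with the moment bounds --- which yield $\sup_h\mathbb{E}^{\mathbb{Q}^h}[|L^h_n|^{1+\epsilon}]<\infty$ and, using crucially \eqref{xih4}, $\sup_h\mathbb{E}^{\mathbb{Q}^h}[e^{c|X^h_i|}]<\infty$ for every $c>0$ --- the family $L^h_n\big(\sum_{i=0}^{n-1}K(t_i,X^h_i,u^h_i)h+G(X^h_n)\big)$ is uniformly integrable and is a continuous functional of $(X^h,L^h,m^h)$ up to an error vanishing as $h\to0$; hence $J_h(u^h)\to\mathbb{E}^{\mathbb{P}^\ast}\big[L_T\big(\int_0^T\!\int_A K(s,X_s,a)\,m(ds,da)+G(X_T)\big)\big]\le V$. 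Therefore $\limsup_h V_h\le V$.

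\textbf{Lower bound.} Given $\varepsilon>0$, the equivalence result of Section~\ref{sectionformulation} together with a standard density argument provides a strict control $u\in\cU$ of the form $u_s=\phi(s,Y_{\cdot\wedge s})$, with $\phi$ bounded and continuous (depending, say, on finitely many observation coordinates), such that $J(u)\ge V-\varepsilon$. Set $u^h_i:=\phi(t_i,Y^h_{\cdot\wedge t_i})\in\cU_h$. Since $Y^h\Rightarrow Y$ by Donsker's theorem and hence, jointly, $X^h\Rightarrow X^u$ and $L^h\Rightarrow L^u$ by stability of the Euler-type schemes \eqref{defXhuhi} and \eqref{Lhuhi} under $(\mathbf{A1})$, and since the reward functionals are uniformly integrable as above, $J_h(u^h)\to J(u)\ge V-\varepsilon$, so $\liminf_h V_h\ge V-\varepsilon$. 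Letting $\varepsilon\downarrow0$ and combining with the upper bound gives the claim.

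\textbf{Main difficulty.} The heart of the argument is part (c) of the identification step: unlike the fully observed case, it is not enough to characterize the controlled martingale problem, and one must also check that the limiting control $m$ carries no more information than the observation $Y$, i.e. that $\mathbb{P}^\ast$ satisfies the compatibility/non-anticipativity constraint of \cite{MR957652}. This requires propagating through the weak limit the conditional-independence structure encoded in the discrete filtration $\mathbb{F}^h$, and it is precisely here that the relaxed-control-rule formalism of Section~\ref{sectionformulation} is indispensable. A pervasive secondary technicality is the exponential-integrability bookkeeping for $L^h$ and $e^{c|X^h|}$ needed both for tightness and for passing to the limit in the reward, which is what the estimate \eqref{xih4} is designed to supply.
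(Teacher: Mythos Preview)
Your proposal is correct and follows essentially the same route as the paper: tightness of the discrete control rules on an enlarged canonical space, identification of any cluster point as a relaxed control rule via the two martingale problems of Definition~\ref{defrf}, uniform integrability of the reward (which the paper packages as $\Gamma\in\overline L^1_*$, a completion of $C_b(\overline\Omega)$ under $\sup_{\overline{\mathbb P}}\mathbb E^{\overline{\mathbb P}}[\,|\cdot|\,]$), and a lower bound by plugging discrete-step strict controls $u^{sd,\kappa}$ into the scheme and invoking uniqueness of the associated martingale problem. Two small alignments worth noting: the paper takes the canonical space to be $\cD^d\times\cD^1\times\cD^k\times V$ rather than a continuous-path space, precisely so that the piecewise-constant interpolations sit in the same space as the limit; and the ``compatibility constraint'' in your step~(c) is not that $m$ be $Y$-adapted but that the $X$-martingale problem \eqref{CtfB[x][z]} hold with respect to the enlarged filtration $\overline{\mathscr G}_T\vee\overline{\mathscr F}_t$ (conditioning on \emph{all} of $(Y,q)$), which is exactly what the independence of $U^h_{i+1}$ from $\sigma(Y^h)\vee\mathscr F^h_i$ that you cite delivers.
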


	\begin{remark}\label{explain1}	
		To the best of our knowledge, Theorem~\ref{ConvergenceTheorem} provides a first convergence result for the approximation of the general partially observed control problem.
		Notice that, in the recent literature of the machine learning based numerical methods for control problems (see e.g. \cite{han2016deep, han2018solving}, etc.), 
		the formulation of the problem and the heuristic discussion are usually in continuous time, but the numerical algorithms are generally in discrete-time.
		Our convergence result in Theorem \ref{ConvergenceTheorem} would provide a general approach for the convergence analysis of these methods.

		It would also be more interesting to obtain a convergence rate of the error $\left| V_h-V \right|$ as $h\rightarrow 0$,
		for which we hope to investigate in the future.
	\end{remark}

\section{An implementable scheme and numerical test}\label{sectionFDtest}

	In the setting of Example \ref{example123}, with the choice of kernel function in~\eqref{defHh},
	the discrete signal and observation take values in a countable set. 
	Based on this, we obtain an implementable numerical scheme via the dynamic programming principle. 
	We will also conduct some numerical experiment for a partially observed control problem in a linear-quadratic setting.

\subsection{An implementable numerical scheme}

	For the increment of the discrete observation, let us set $\left\{ \eta_{i}^{h} \right\}_{1\leq i \leq n}$ to be independent random variables such that $\mathbb{Q}^h \left(\eta_{i}^{h} = \pm \sqrt{h} \right) =\frac{1}{2}$ as in the first case of Example~\ref{example12}. Then we can choose small enough $h>0$ satisfying $\left| \eta_{i+1}^h \right| < \left(\left\| p \right\|_\infty \right)^{-1}$, so that we have $L^{h,u^h}_i>0$ for each $i=0,1,\cdots,n$.

	The kernel function $H_h$ is given by~\eqref{defHh}.
	In particular, the process $X^{h, u^h}$ takes value in the discrete space $\{k \Delta x ~: k \in \mathbb{Z}\}$.
	Let us define $\mathbb{P}^{h, u^h}$ by
	$d {\mathbb{P}^{h,{u^h}}}   := {L^{h, {u^h}}_{n}}d \mathbb{Q}^h$ and consider the conditional distribution of ${X}^{h,{u^h}}_{l}$ given the past observation 
$$
{\mathscr{F}}^{{ Y}^h}_{l}=\sigma\left( {Y^h_0}, {Y^h_{1}}, \cdots, {Y^h_{l}} \right)
$$ 
under the probability ${ \mathbb{P}^{h,{u^h}}}$, i.e. the discrete filter process 
$$
{\mu^{h,u^h}_{l}} := \mathscr{L}^{\mathbb{P}^{h,{u^h}}} \left( {X}^{h,u^h}_{l} \,\Big|\, \mathscr{F}^{Y^h}_{l} \right),\quad\quad l=0,1,\cdots,n.
$$ 
Then the discrete reward~\eqref{defJhuh} can be rewritten as
\be\label{defJhuhnew}
J_h\left(u^h\right) := \mathbb{E}^{\mathbb{Q}^h} \left[ \lambda_n^{h,u^h} \times \left( \sum_{i=0}^{n-1} \mu^{h,u^h}_i \left( K \left( t_i,\,\cdot\,,u_i^h \right) \right) h + \mu^{h,u^h}_n \left(G \right) \right) \right],
\ee
where $\lambda_{0}^{h,u^h}=1$, and for $l=0,1,\cdots,n-1$,
\be\nonumber
\displaystyle \lambda_{l+1}^{h,u^h} &:=&\displaystyle \mathbb{E}^{\mathbb{Q}^h} \left[ {L^{h, {u^h}}_{l+1}} \,\Big|\, \mathscr{F}^{Y^h}_{l+1} \right] = \mathbb{E}^{\mathbb{Q}^h} \left[ L^{h, u^h}_{l} \left\{ 1 + p\left(t_{l}, X^{h,u^h}_{l} \right) \eta_{l+1}^h \right\} \,\Big|\, \mathscr{F}^{Y^h}_{l+1} \right] \\\nonumber
&=&\displaystyle \mathbb{E}^{\mathbb{Q}^h} \left[ L^{h, u^h}_{l} \,\Big|\, \mathscr{F}^{Y^h}_{l+1} \right] +  \mathbb{E}^{\mathbb{Q}^h} \left[ L^{h, u^h}_{l} p\left(t_{l}, X^{h,u^h}_{l} \right)  \,\Big|\, \mathscr{F}^{Y^h}_{l+1} \right] \eta_{l+1}^h.
\ee
Since $Y^h_{l+1}-Y^h_{l}$ is independent with $\mathscr{F}^h_{l}$, it follows that
\be\nonumber
\displaystyle \lambda_{l+1}^{h,u^h} &=&\displaystyle \mathbb{E}^{\mathbb{Q}^h} \left[ L^{h, u^h}_{l} \,\Big|\, \mathscr{F}^{Y^h}_{l} \right] +  \frac{ \mathbb{E}^{\mathbb{Q}^h} \left[ L^{h, u^h}_{l} p\left(t_{l}, X^{h,u^h}_{l} \right)  \,\Big|\, \mathscr{F}^{Y^h}_{l} \right] }{\mathbb{E}^{\mathbb{Q}^h} \left[ L^{h, u^h}_{l} \,\Big|\, \mathscr{F}^{Y^h}_{l} \right]} \cdot\mathbb{E}^{\mathbb{Q}^h} \left[ L^{h, u^h}_{l} \,\Big|\, \mathscr{F}^{Y^h}_{l} \right] \cdot \eta_{l+1}^h \\\nonumber
&=&\displaystyle \lambda_{l}^{h,u^h} + \mathbb{E}^{\mathbb{P}^{h,u^h}} \left[ p\left(t_{l}, X^{h,u^h}_{l} \right)  \,\Big|\, \mathscr{F}^{Y^h}_{l} \right] \cdot \lambda_{l}^{h,u^h} \cdot \eta_{l+1}^h = \lambda_{l}^{h,u^h} \left[ 1 + \mu^{h,u^h}_{l} \left( p\left(t_{l}, \cdot \right) \right) \cdot \eta_{l+1}^h \right],
\ee
which indicates that $\lambda_{l+1}^{h,u^h}$ is determined by the past data of discrete filter and observation $\left\{ \mu_i^{h,u^h}, \eta^h_{i+1} \right\}_{0\leq i \leq l}\,$.

Thus for solving the control problem corresponding to~\eqref{defJhuhnew}, the key is to study the dynamic of the discrete filter process $\mu^{h,u^h}$. Note that
\be\nonumber
\displaystyle {\mu^{h,u^h}_{l+1,k}} &:=&\displaystyle {\mathbb{P}^{h,{u^h}}} \left( {X}^{h,u^h}_{l+1} = x_k \,\Big|\, \mathscr{F}^{Y^h}_{l+1} \right) = \frac{ \mathbb{E}^{\mathbb{Q}^h} \left[ {L^{h, {u^h}}_{l+1}} \cdot \mathbb{I}_{\,{X}^{h,u^h}_{l+1} = x_k} \,\Big|\, \mathscr{F}^{Y^h}_{l+1} \right] }{\mathbb{E}^{\mathbb{Q}^h} \left[ {L^{h, {u^h}}_{l+1}} \,\Big|\, \mathscr{F}^{Y^h}_{l+1} \right]} = \frac{ \mathbb{E}^{\mathbb{Q}^h} \left[ {L^{h, {u^h}}_{l+1}} \cdot \mathbb{I}_{\,{X}^{h,u^h}_{l+1} = x_k } \,\Big|\, \mathscr{F}^{Y^h}_{l+1} \right] }{\lambda_{l+1}^{h,u^h}} \\\label{derivemul1}
&=&\displaystyle \frac{ \mathbb{E}^{\mathbb{Q}^h} \left[ {L^{h, {u^h}}_{l+1}} \cdot \mathbb{I}_{\,{X}^{h,u^h}_{l+1} = x_k } \,\Big|\, \mathscr{F}^{Y^h}_{l+1} \right] }{\lambda_{l}^{h,u^h} \left[ 1 + \mu^{h,u^h}_{l} \left( p\left(t_{l}, \cdot \right) \right) \cdot \eta_{l+1}^h \right]} = \frac{ \mathbb{E}^{\mathbb{Q}^h} \left[ L^{h, {u^h}}_{l} \left\{ 1+ p\left( t_l,X^{h,u^h}_l \right)\eta^h_{l+1} \right\} \cdot \mathbb{I}_{\,{X}^{h,u^h}_{l+1} = x_k } \,\Big|\, \mathscr{F}^{Y^h}_{l+1} \right] }{ \mathbb{E}^{\mathbb{Q}^h} \left[ {L^{h, {u^h}}_{l}} \,\Big|\, \mathscr{F}^{Y^h}_{l} \right] \times \left[ 1 + \mu^{h,u^h}_{l} \left( p\left(t_{l}, \cdot \right) \right) \cdot \eta_{l+1}^h \right]}.
\ee
For the numerator of the last term in~\eqref{derivemul1}, we have
\be\nonumber
&&\displaystyle \mathbb{E}^{\mathbb{Q}^h} \left[ L^{h, {u^h}}_{l} \left\{ 1+ p\left( t_l,X^{h,u^h}_l \right)\eta^h_{l+1} \right\} \cdot \mathbb{I}_{\,{X}^{h,u^h}_{l+1} = x_k } \,\Big|\, \mathscr{F}^{Y^h}_{l+1} \right] \\\nonumber
&=&\displaystyle \mathbb{E}^{\mathbb{Q}^h} \left[ L^{h, {u^h}}_{l} \left\{ 1+ p\left( t_l,x_{k-1} \right)\eta^h_{l+1} \right\} \cdot \mathbb{I}_{\,{X}^{h,u^h}_{l} = x_{k-1} } \cdot \mathbb{I}_{\,H_h \left( t_l, x_{k-1} , u^h_l, U^h_{l+1} \right) = \Delta x } \,\Big|\, \mathscr{F}^{Y^h}_{l+1} \right] \\\nonumber
&&\displaystyle + \mathbb{E}^{\mathbb{Q}^h} \left[ L^{h, {u^h}}_{l} \left\{ 1+ p\left( t_l,x_{k} \right) \eta^h_{l+1} \right\} \cdot \mathbb{I}_{\,{X}^{h,u^h}_{l} = x_{k} } \cdot \mathbb{I}_{\,H_h \left( t_l, x_{k} , u^h_l, U^h_{l+1} \right) = 0 } \,\Big|\, \mathscr{F}^{Y^h}_{l+1} \right] \\\label{last2}
&&\displaystyle + \mathbb{E}^{\mathbb{Q}^h} \bigg[ L^{h, {u^h}}_{l} \left\{ 1+ p\left( t_l,x_{k+1} \right) \eta^h_{l+1} \right\} \cdot \mathbb{I}_{\,{X}^{h,u^h}_{l} = x_{k+1} } \cdot \mathbb{I}_{\,H_h \left( t_l, x_{k+1} , u^h_l, U^h_{l+1} \right) = -\Delta x } \,\Big|\, \mathscr{F}^{Y^h}_{l+1} \bigg].
\ee
For the first term of the right hand side of the above equality, it holds that
\be\nonumber
&& \displaystyle \mathbb{E}^{\mathbb{Q}^h} \left[ L^{h, {u^h}}_{l} \left\{ 1+ p\left( t_l,x_{k-1} \right) \eta^h_{l+1} \right\} \cdot \mathbb{I}_{\,{X}^{h,u^h}_{l} = x_{k-1} } \cdot \mathbb{I}_{\,H_h \left( t_l, x_{k-1} , u^h_l, U^h_{l+1} \right) = \Delta x } \,\Big|\, \mathscr{F}^{Y^h}_{l+1} \right] \\\nonumber
&=&\displaystyle \mathbb{E}^{\mathbb{Q}^h} \Bigg[ L^{h, {u^h}}_{l} \left\{ 1+ p\left( t_l,x_{k-1} \right) \eta^h_{l+1} \right\} \cdot \mathbb{I}_{\,{X}^{h,u^h}_{l} = x_{k-1} } \cdot \mathbb{E}^{\mathbb{Q}^h} \left[ \mathbb{I}_{\,H_h \left( t_l, x_{k-1} , u^h_l, U^h_{l+1} \right) = \Delta x } \,\Big|\, \mathscr{F}^{Y^h}_{l+1} \vee \mathscr{F}^h_{l} \right] \,\Bigg|\, \mathscr{F}^{Y^h}_{l+1} \Bigg] \\\nonumber
&=&\displaystyle \mathbb{E}^{\mathbb{Q}^h} \left[ L^{h, {u^h}}_{l} \left\{ 1+ p\left( t_l,x_{k-1} \right) \eta^h_{l+1} \right\} \mathbb{I}_{\,{X}^{h,u^h}_{l} = x_{k-1} } p^{l,k-1,u^h_l}_+ \,\Bigg|\, \mathscr{F}^{Y^h}_{l+1} \right] \\\nonumber
&=&\displaystyle \mathbb{E}^{\mathbb{Q}^h} \left[ L^{h, {u^h}}_{l}  \mathbb{I}_{\,{X}^{h,u^h}_{l} = x_{k-1} } \Bigg| \mathscr{F}^{Y^h}_{l+1} \right] \left\{ 1+ p\left( t_l,x_{k-1} \right) \eta_{l+1}^h \right\} p^{l,k-1,u^h_l}_+ = \mathbb{E}^{\mathbb{Q}^h} \left[ L^{h, {u^h}}_{l}  \mathbb{I}_{\,{X}^{h,u^h}_{l} = x_{k-1} } \Bigg| \mathscr{F}^{Y^h}_{l} \right] \left\{ 1+ p\left( t_l,x_{k-1} \right) \eta_{l+1}^h \right\} p^{l,k-1,u^h_l}_+,
\ee
where $\mathscr{F}^h_i$ is defined in~\eqref{defscrFh} and $p^{l,k-1,u^h_l}_+$ is defined in~\eqref{defHh}. Performing the similar calculation for the last two terms in~\eqref{last2}, we get
\be\nonumber
\mathbb{E}^{\mathbb{Q}^h} \left[ {L^{h, {u^h}}_{l+1}} \cdot \mathbb{I}_{\,{X}^{h,u^h}_{l+1} = x_k } \,\Big|\, \mathscr{F}^{Y^h}_{l+1} \right] 
&=&\displaystyle \mathbb{E}^{\mathbb{Q}^h} \left[ L^{h, {u^h}}_{l}  \cdot \mathbb{I}_{\,{X}^{h,u^h}_{l} = x_{k-1} } \,\Bigg|\, \mathscr{F}^{Y^h}_{l} \right] \left\{ 1+ p\left( t_l,x_{k-1} \right) \eta_{l+1}^h \right\} \cdot p^{l,k-1,u^h_l}_+ \\\nonumber
&&\displaystyle + \mathbb{E}^{\mathbb{Q}^h} \left[ L^{h, {u^h}}_{l}  \cdot \mathbb{I}_{\,{X}^{h,u^h}_{l} = x_{k} } \,\Bigg|\, \mathscr{F}^{Y^h}_{l} \right] \left\{ 1+ p\left( t_l,x_{k} \right) \eta_{l+1}^h \right\} \cdot \left( 1- p^{l,k,u^h_l}_+ - p^{l,k,u^h_l}_- \right) \\\label{numerator}
&&\displaystyle + \mathbb{E}^{\mathbb{Q}^h} \left[ L^{h, {u^h}}_{l}  \cdot \mathbb{I}_{\,{X}^{h,u^h}_{l} = x_{k+1} } \,\Bigg|\, \mathscr{F}^{Y^h}_{l} \right] \left\{ 1+ p\left( t_l,x_{k+1} \right) \eta_{l+1}^h \right\} \cdot p^{l,k+1,u^h_l}_-.
\ee
Combining~\eqref{derivemul1} with~\eqref{numerator}, it holds that
\be\nonumber
\displaystyle {\mu^{h,u^h}_{l+1,k}} &=&\displaystyle \frac{1}{1 + \left[ \sum\limits_{m\in\mathbb{Z} } p\left( t_l , x_m \right)\cdot \mu_{l,m}^{h,u^h} \right] \cdot {\eta}^h_{l+1}} \times \Bigg\{ \left[ 1 + p\left( t_l, x_{k-1} \right){\eta}^h_{l+1} \right] \cdot p^{l,k-1,u^h_l}_+ \cdot {\mu^{h,u^h}_{l,k-1}}
 \\\nonumber
&&\displaystyle  \quad\quad\quad\quad\quad\quad\quad\quad\quad\quad\quad\quad\quad\quad\quad + \left[ 1 + p\left( t_l , x_k \right){\eta}^h_{l+1} \right] \cdot \left( 1 - p^{l,k,u^h_l}_+ - p^{l,k,u^h_l}_- \right) \cdot { \mu^{h,u^h}_{l,k}} \\\nonumber
&&\displaystyle  \quad\quad\quad\quad\quad\quad\quad\quad\quad\quad\quad\quad\quad\quad\quad +\left[ 1 + p\left( t_l, x_{k+1} \right){\eta}^h_{l+1} \right] \cdot p^{l,k+1,u^h_l}_- \cdot { \mu^{h,u^h}_{l,k+1}} \Bigg\} .
\ee
The above implies that, for some function $\mathbf{\Xi}^h $, one has
\begin{equation}\label{markov}
{\mu^{h,u^h}_{l+1}} = \mathbf{\Xi}^h \left( l, u^h_l, \mu^{h,u^h}_l,{\eta}^h_{l+1} \right).
\end{equation}

We denote by ${V}_h \left( t_l, \mu \right)$ the discrete value starting from $t_l$ with the initial distribution ${\mu}$. In view of~\eqref{markov}, the discrete filter satisfies the Markov property and the flow property, which allows us to use DPP (see e.g.~\cite[Theorem 4.1]{MR957652}) to compute the value corresponding to~\eqref{defJhuhnew}, by an inductively backward way: for $0 \leq l \leq n-1$,
\begin{subequations}\label{FD}
\be\nonumber
\displaystyle {V}_h \left( t_l, \mu \right) &=&\displaystyle \sup_{a \in A} \mathbb{E}^{ {\mathbb{Q}^h} } \Big[ \left( 1 + \mu\left( p\left( t_l, \cdot \right) \right) {\eta}^h_{l+1} \right) \times \big\{ \mu\left( K \left( t_l, \cdot, a \right)\right) h + {V}_h \left( t_{l+1}, \mathbf{\Xi}^h \left( l, a, \mu, {\eta}^h_{l+1} \right) \right) \big\} \Big]\\\label{FD1}
&=&\displaystyle \sup_{a \in A} \mathbb{E}^{ {\mathbb{Q}^h} } \Big[ \mu\left( K \left( t_l, \cdot, a \right)\right) h +\left( 1 + \mu \left[ p\left( t_l, \cdot \right) \right] \eta^h_{l+1} \right) {V}_h \left( t_{l+1}, \mathbf{\Xi}^h \left( l, a, \mu, \eta^h_{l+1} \right) \right) \Big],
\ee
with the terminal condition
\begin{equation}\label{FD2}
{V}_h \left( t_n, \mu \right) = \mu \left( G \right) = \int G(x) \mu(dx).
\end{equation}
\end{subequations}
Notice that $\mu$ has support in the discrete space $\{k \Delta x ~: k \in \mathbb{Z} \}$.

\subsection{Numerical test}\label{NumLQsec}

In this subsection, we illustrate our theoretical result by a simple numerical example with a linear quadratic structure in one dimension $d=k=1$, so that the reference value of the problem can be computed explicitly. Concretely, we set 
$$
A=\mathbb{R},\quad b(t,x,a)=a,\quad \sigma(t,x,a)\equiv 1,\quad p(t,x)=x,\quad K(t,x,a)=a^2,\quad G(x)=x^2.
$$ 
In this way, the linear controlled system is given by
\begin{subequations}\label{LQ}
\begin{equation}
X_t^u = x_0 + \int_0^t u_s \, ds + B_t, \quad\quad Y_t= \int_0^t X_s^u\, ds + W_t,\label{LQsde}
\end{equation}
where $(B,W)$ is an $\mathbb{R}^2$-valued standard Brownian motion on $\left(\Omega,\mathscr{F},\mathbb{P}\right)$, with the quadratic reward
\begin{equation}\label{LQcost}
J(u)= \mathbb{E}^\mathbb{P} \left[ \int_0^T \left| u_s \right|^2 ds + \left|X^u_T\right|^2 \right] = \mathbb{E}^{\mathbb{Q}} \left[ L_T^u \left(\int_0^T \left| u_s \right|^2 ds + \left|X^u_T\right|^2 \right) \right],
\end{equation}
\end{subequations}
where $\frac{d \mathbb{Q}}{d \mathbb{P}} := \left( L^u_T \right)^{-1}$ and $L_t^u :=\exp\left[ \int_0^t X_s^u \,d Y_s - \frac{1}{2} \int_0^t \left| X_s^u \right|^2 d s \right]$ such that $(B,Y)$ is an $\mathbb{R}^2$-valued standard Brownian motion on $\left(\Omega,\mathscr{F},\mathbb{Q}\right)$.

\begin{figure}[htp]\centering
    \includegraphics[width=0.7\linewidth]{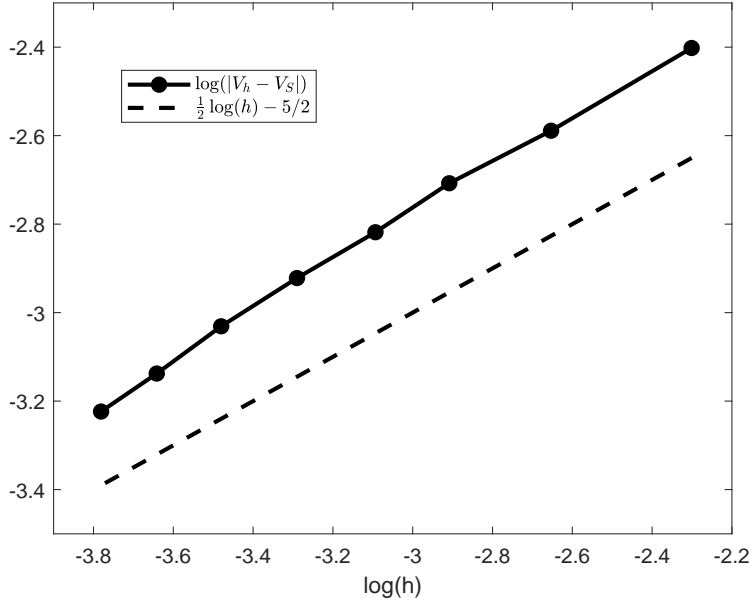}
  \caption{Approximation errors for Example~\eqref{LQ} using the numerical scheme~\eqref{FD}.}
  \label{figLQ}
\end{figure}

Following Bensoussan~\cite[Theorem 2.4.1]{MR1191160}, one obtains the explicit solution of the optimal control for~\eqref{LQ}:
$$
u^*_t= - \Pi_t\,\hat{y}_t,
$$ 
where 
$$
d \hat{y}_t = -\left( \Pi_t + P_t \right) \hat{y}_t\,dt + P_t\,dY_t,\quad\quad \hat{y}_0= x_0,
$$
with $\Pi$ being the solution to the backward Riccati equation
$$
\frac{d \Pi_t}{dt}=\Pi_t^2,\quad\quad \Pi_T=1,
$$
and $P$ being the solution to the forward variance equation
$$
\frac{d P_t}{dt}= 1- P_t^2,\quad\quad P_0=  0.
$$

We implement the numerical scheme~\eqref{FD} with $x_0=0$, $T=0.1$, and $h=\Delta x^2 /10$. The numerical results are presented on Figure~\ref{figLQ}, where the solid line displays the monotonic trend of the log errors $\log(| V_h - V_S|)$ against the log values of the time mesh size $\log(h)$, and the dotted line is the reference line with slope $1/2$. We observe that the empirical rate of convergence is about $1/2$. It should be pointed out that the error in Figure~\ref{figLQ} consists of two sources. The first source of error comes from the time discretization of the scheme~\eqref{FD}. The second source of error comes from the truncation of computational space area, and the linear interpolation of the measure-valued function $\mathbf{\Xi}^h$ which is stated in Remark~\ref{R31}. In particular, when we set the time mesh size to be $h=1.66\times 10^{-4}$ and use $12$ spatial grids from $[-0.49,0.49]$, the error range is no more than $0.6\%$, which indicates that our scheme works well to approximate the exact value.

\begin{remark}\label{R31}
To implement algorithm~\eqref{FD}, one also needs to discretize the measure $\mu$. One alternative way is that, fixing $M\in\mathbb{Z}_+$, we discretize the measure to be the form of $\mu(x_k)=m/M$ for some $m=0,1,\cdots,M$ such that $\sum_{k\in\mathbb{Z}} \mu(x_k) =1$. Moreover, if we use finite spatial girds, there are finite number of measures $\{\mu^j \}_{1 \leq j \leq K}$ to be considered.  Then at each time step $t_l$, we can use proper linear combination to approximate the measure-valued update function $\mathbf{\Xi}^h ( l, a, \mu^j, \eta^h_{l+1} ) \approx \sum_{i=1}^K \alpha_i \,\mu^i$. And the value can be approximated by the first-order linear expansion ${V}_h ( t_{l+1}, \mathbf{\Xi}^h ( l, a, \mu, \eta^h_{l+1} )) \approx \sum_{i=1}^K \alpha_i \,{V}_h ( t_{l+1}, \mu^i )$.
\end{remark}

\section{Partially observed optimal control: different formulations}\label{sectionformulation}

	In preparation of the proof of main convergence result in Theorem \ref{ConvergenceTheorem}, we follow~\cite{MR957652} to introduce here strong, weak, and relaxed formulations of the stochastic optimal control problem under partial observation, and revisit the corresponding equivalence result.
	
	Let us first introduce some canonical spaces.

\begin{enumerate}

	\renewcommand{\labelenumi}{(i)}
	\item Let $\cD^d := D([0,T], \mathbb{R}^d)$
	be the space of all c\`adl\`ag  $\mathbb{R}^d$-valued paths on $[0,T]$ equipped with its canonical filtration $\mathscr{D}^d_t$.
	Notice that $\cD^d$ is a Polish space under the Skorokhod topology.

	\renewcommand{\labelenumi}{(ii)}
	\item 
	Let $\mathcal{C}^d :=C([0,T],\mathbb{R}^d) $ 
	be the space of all continuous  $\mathbb{R}^d$-valued paths on $[0,T]$ equipped with its canonical filtration $\mathscr{C}^d_t$.
	Notice that $\cC^d$ is a Polish space under the uniform convergence topology, which is also a closed subset of $\cD^d$ (under the Skorokhod topology).

	\renewcommand{\labelenumi}{(iii)}
	\item 
	Let $\mathbf{M}([0,T]\times A)$ denote the space of all finite positive measures $q$ on $[0,T]\times A$ such that $q(dt , A)=dt$, which is a Polish space equipped 
	with the weak convergence topology. We will use its subset $V$ whose elements admit disintegration, i.e.,
	$$
		V:=\left\{ q\in \mathbf{M}([0,T]\times A) :  q(dt,da)=q(t,da)dt  ~\text{s.t.}~ \int_A q(t,da) = 1, ~\forall t \right\}.
	$$
	Notice that $V$ is closed under weak convergence topology and hence is also a Polish space. We define the filtration on $V$ by 
	$$
	\mathscr{V}_t:=\sigma \left\{ \int_0^s \varphi(r,a)\,q(dr,da),\quad s\leq t,\quad \varphi\in C_b([0,T]\times A)\right\}.
	$$
	 In particular, $\mathscr{V}_T$ is the Borel $\sigma$-field of $V$. The set of measurable functions $v$ from $[0,T]$ into $A$ is embedded in $V$ in a natural way by defining the atomic measure $q^v(ds,da):=\delta_{v(s)}(da)ds$, where $\delta_z$ is the Dirac measure at $z\in A$. The set of the atomic measures is denoted by $V^0$.
\end{enumerate}

	Next, we define the enlarged canonical space
	\begin{equation}\nonumber
		\overline{\Omega}:=\mathcal{D}^d \times \mathcal{D}^1 \times \mathcal{D}^k \times V,
	\end{equation}
	with the canonical filtration $(\overline{\mathscr{F}}_t)_{t \in [0,T]}$ defined by
	$$
		\overline{\mathscr{F}}_t:= \mathscr{D}_t^d \otimes \mathscr{D}_t^1 \otimes \mathscr{D}_t^k \otimes \mathscr{V}_t.
	$$
	The canonical process is denoted by $(X,L,Y,q)$. 
	We will also use the sub-filtration $(\overline{\mathscr{G}}_t)_{t \in [0,T]}$ generated by the canonical process $(Y,q)$, i.e.
	$$
		\overline{\mathscr{G}}_t:= \mathscr{D}_t^k \otimes \mathscr{V}_t .
	$$
	
	\begin{remark}
		To study the continuous time control problem with partial observation \eqref{eq:defV_main}, it is natural to use the canonical space $\cC^d \x \cC^1\x  \cC^k \x V$.
		We nevertheless use the Skorokhod space of c\`adl\`ag paths as canonical space to include discrete-time problems.
		In fact, in the latter part of the paper, we will consider a discrete-time process as a continuous-time process with piece-wise constant paths.
		
	\end{remark}

\subsection{Strong formulation}

	The strong formulation is that given in Introduction or Section \ref{sectionconvergenceresult}.
	We nevertheless recall it here and then reformulate it equivalently on the canonical space $\overline \Omega$.
	Let us consider the canonical space
\begin{equation}\nonumber
\Omega:=\mathcal{C}^m \times \mathcal{C}^k,
\end{equation}
equipped with its Borel $\sigma$-algebra $\mathscr{F}:=\mathscr{B}(\Omega)$ and canonical element $(B,Y)$. Let $\mathbb{F}:=\left\{ \mathscr{F}_t \right\}_{0\leq t \leq T}$ and $\mathbb{G}:=\left\{ \mathscr{G}_t \right\}_{0\leq t \leq T}$ be the filtrations on $(\Omega,\mathscr{F})$ defined by
\begin{equation}\nonumber
\mathscr{F}_t := \sigma\left\{ \left(B_s,Y_s \right):s\in[0,t]\right\},\quad\text{and}\quad \mathscr{G}_t := \sigma\left\{Y_s:s\in[0,t]\right\},\quad t\in[0,T].
\end{equation}
	Let us denote by $\mathbb{Q}$ the probability measure on $(\Omega,\mathscr{F})$ under which $(B,Y)$ is a standard $\mathbb{R}^{m+k}$-valued Brownian motion. 
	Further, recall that $A$ is a given compact separable metric space, 
	$\cU$ denotes the space of all admissible control processes  $u = \left\{u_t\right\}_{0\leq t \leq T}$,
	i.e. $u$ is $A$-valued and progressively measurable w.r.t. the filtration $\mathbb{G}$. 

	\vspace{0.5em}

	Then, given an admissible control $u\in \mathcal{U}$, the controlled signal SDE and the Radon-Nikodym derivative SDE, $\mathbb{Q}$-a.s.,
		\be
		 \left\{
		\ba{rcl}
		&&\displaystyle X_t^u = x_0 + \int_0^t b \left(s,X_s^u,u_s \right)ds + \int_0^t \sigma \left(s,X_s^u,u_s\right) dB_s\\[0.3cm]
		&&\displaystyle L_t^u = 1 + \int_0^t p \left(s,X_s^u\right) L^u_s \,dY_s,\ea
		\right.\label{eq:dynamicXL}
	\ee
	admit unique strong solution $\left( X^u, L^u \right)$.
	Then the strong formulation of the control problem with partial observation is given by
	\begin{equation}\label{Ju11}
		V_S := \sup_{u\in\mathcal{U}} J(u),
		\quad\mbox{with}\quad
		J(u)= \mathbb{E}^{\mathbb{Q}} \left[ L_T^u \left(\int_0^T K(s,X_s^u,u_s)\,ds + G(X^u_T)\right) \right].
	\end{equation}

Next we reformulate the control problem on the enlarged canonical space. For each $u\in \mathcal{U}$, we define the atomic measure $q^u(dt,da) = q^u(t,da) dt :=\delta_{u_t}(da)dt$. Then we introduce the set of strong control rules $\mathcal{R}_S$, which is a set of probability measures on the canonical space $\overline{\Omega}$ as follows:
\begin{equation}\nonumber
\mathcal{R}_S := \left\{ \overline{\mathbb{R}}_S = \mathbb{Q} \circ \left( X^u, L^u, Y, q^u \right)^{-1} :u\in\mathcal{U} \right\}.
\end{equation}

We can rewrite~\eqref{Ju11} as
\be\label{GammacR}
\displaystyle J \left(\overline{\mathbb{R}}_S\right) = \mathbb{E}^{\overline{\mathbb{R}}_S} \left[ \Gamma \right],
\ee
where the continuous function $\Gamma$ on $\overline{\Omega}$ is defined by
\begin{equation}\label{Gamma}
\Gamma(X,L,q) := L_T \bigg( \int_0^T \int_A K\left(s,X_s,a\right) q \left(s,da\right) ds + G\left(X_T\right) \bigg).
\end{equation}
Thus we can compute the value by
\begin{equation}\nonumber
V_S = \sup_{\overline{\mathbb{R}}_S \in\cR_S} \mathbb{E}^{\overline{\mathbb{R}}_S} \left[ \Gamma \right].
\end{equation}

Next we introduce two subsets of the strictly rules set $\cR_S$.
\begin{definition}
(i). We denote by $\cR^{s}_S$ the set of the strictly step rules $\overline{\mathbb{R}}^{s}_S$ on $\overline{\Omega}$ with
$$
\overline{\mathbb{R}}^{s}_S = \mathbb{Q} \circ \left( X^{u^{s,\kappa}}, L^{u^{s,\kappa}},\, Y\,,\, q^{u^{s,\kappa}} \right)^{-1},
$$ 
where ${u^{s,\kappa}} \in \mathcal{U}$ is a \textbf{step strategies} such that, with some $\kappa>0$,
$$
u^{s,\kappa}_t := w_i^{s,\kappa} \left(Y|_{[0,i\kappa]}\right),\quad\quad t\in[i\kappa,(i+1)\kappa),
$$
where $w_i^{s,\kappa}$ is a measurable functional on $C([0,i\kappa];\mathbb{R}^k)$.

(ii). Similarly, we denote by $\cR^{sd}_{S}$ the set of the strictly discrete step rules $\overline{\mathbb{R}}^{sd}_{S}$ on $\overline{\Omega}$ with
$$
\overline{\mathbb{R}}^{sd}_{S} = \mathbb{Q} \circ \left( X^{u^{sd,\kappa}}, L^{u^{sd,\kappa}},\, Y\,,\, q^{u^{sd,\kappa}} \right)^{-1},
$$ 
for some \textbf{discrete step strategies} ${u^{sd,\kappa}}$ with some $\kappa>0$, defined by
$$
u^{sd,\kappa}_t := {w}_i^{sd,\kappa} \left(Y_{r_j^i} \,,\, j\leq J_i\,,\,r_j^i \leq i\kappa \right),\quad\quad t\in[i\kappa,(i+1)\kappa),
$$
where ${w}_i^{sd,\kappa}$ is a uniformly Lipschitz continuous and bounded function on $\mathbb{R}^{k\times J_i}$ with $J_i\in\mathbb{N}_+$.
\end{definition}

The corresponding values are 
$$
V^s_S = \sup_{\overline{\mathbb{R}}^s_S\in\cR^s_S}\mathbb{E}^{\overline{\mathbb{R}}^s_S} \left[ \Gamma \right],\quad\quad\quad V^{sd}_{S} = \sup_{\overline{\mathbb{R}}^{sd}_{S} \in\cR^{sd}_{S}} \mathbb{E}^{\overline{\mathbb{R}}^{sd}_{S} } \left[ \Gamma \right].
$$

\subsection{A weak formulation}

As in the classical SDE theory, one can consider all possible probability spaces to define a weak solution of the controlled system~\eqref{strict}.

\begin{definition}\label{defwf} (Weak Control) We say that a term
\begin{equation}\nonumber
\gamma = \left( \Omega^\gamma, \mathscr{F}^\gamma, \mathbb{Q}^\gamma, \mathbb{F}^\gamma, \mathbb{G}^\gamma , X^\gamma, L^\gamma, B^\gamma, Y^\gamma, u^\gamma = \left\{ u^\gamma_t \right\}_{0 \leq t \leq T} \right)
\end{equation}
is a weak control if

\begin{enumerate}

\item
The $\left( \Omega^\gamma, \mathscr{F}^\gamma, \mathbb{Q}^\gamma \right)$ is a complete probability space equipped with two right continuous filtrations $\mathbb{F}^\gamma = \left\{ \mathscr{F}_t^\gamma \right\}_{0\leq t \leq T}$ and $\mathbb{G}^\gamma = \left\{ \mathscr{G}_t^\gamma \right\}_{0\leq t \leq T}$ such that $\mathscr{G}_t^\gamma \subseteq \mathscr{F}_t^\gamma$, for all $t\in[0,T]$;

\item
$(B^\gamma,Y^\gamma)$ is an $\mathbb{R}^{m+k}$-valued standard Brownian with respect to $\mathbb{F}^\gamma$ under $\mathbb{Q}^\gamma$; in addition, $Y^\gamma$ is adapted to $\mathbb{G}^\gamma$ and $B^\gamma$ is an $ \mathscr{G}_T^\gamma \vee \mathscr{F}_t^\gamma$ Brownian motion;

\item
$u^\gamma$ is an $A$-valued and $\mathbb{G}^\gamma$-adapted process;

\item
The $\mathbb{F}^\gamma$-adapted process $\left( X^\gamma, L^\gamma \right)$ satisfies that for all $t\in[0,T]$, $\mathbb{Q}^\gamma$-a.s.,
		\be
		 \left\{
		\ba{rcl}
		&&\displaystyle X_t^\gamma = x_0 + \int_0^t b \left(s,X_s^\gamma,u_s^\gamma \right)ds + \int_0^t \sigma \left(s,X_s^\gamma,u_s^\gamma\right) dB_s^\gamma, \\[0.3cm]
		&&\displaystyle L_t^\gamma = 1 + \int_0^t p \left(s,X_s^\gamma\right) L^\gamma_s\,dY^\gamma_s.\ea
		\right.\nonumber
	\ee

\end{enumerate}

\end{definition}

As in the strong formulation, for each weak control $\gamma$, we define the atomic measure $q^\gamma(dt,da) = q^\gamma(t,da) dt :=\delta_{u_t^\gamma}(da)dt$. Then the set of weak control rules is defined by
\begin{equation}\nonumber
\mathcal{R}_W := \left\{ \overline{\mathbb{R}}_W = \mathbb{Q}^\gamma \circ \left( X^\gamma, L^\gamma, Y^\gamma, q^\gamma \right)^{-1} :\text{$\gamma$ is a weak control} \right\},
\end{equation}
and the value of the weak formulation is defined by
\begin{equation}\nonumber
V_W = \sup_{\overline{\mathbb{R}}_W \in\cR_W} \mathbb{E}^{\overline{\mathbb{R}}_W} \left[ \Gamma \right].
\end{equation}

\begin{remark}\label{remarkmartingale}
We state two martingale properties for the weak rules $\overline{\mathbb{R}}_W$. By the It\^o formula, it follows that 

\begin{enumerate}
\item
for each $f\in C_b^{\infty}([0,T]\times \mathbb{R}^d)$, the process
\begin{equation}\nonumber
C_t^{f,X}:=f \left( t,X^\gamma_t \right) - f \left( 0,X^\gamma_0 \right)-\int_0^t \cL^{s,X^\gamma_s,u^\gamma_s} f \left( s,X^\gamma_s \right)ds
\end{equation}
is a $\mathscr{G}^\gamma_T \vee \mathscr{F}^\gamma_t$-martingale under $\mathbb{Q}^\gamma$, where the Fokker-Planck operator $\cL^{t,x,a}$ is defined by
\begin{equation}\label{cLB}
\cL^{t,x,a}f := f_t + b\left(t,x,a\right)\cdot Df + \frac{1}{2} \text{Tr} \left[ \sigma \sigma^\mathrm{T} \left(t,x,a\right) \times D^2f \right];
\end{equation}
\item
for each $g\in C_b^{\infty}([0,T]\times \mathbb{R} \times \mathbb{R}^k)$, the process
\begin{equation}\nonumber
D_t^{g,L}:=g \left( t,L^\gamma_t,Y^\gamma_t \right)-g \left( 0,L^\gamma_0,Y^\gamma_0 \right) - \int_0^t \cM^{s,X^\gamma_s,L^\gamma_s}g \left( s,L^\gamma_s,Y^\gamma_s \right)ds
\end{equation}
is a $\mathscr{F}^\gamma_t$-martingale under $\mathbb{Q}^\gamma$, where the Fokker-Planck operator $\cM^{t,x,l}$ is defined by
\begin{equation}\label{cML}
\cM^{t,x,l}g := g_t + \frac{1}{2}\text{Tr} \left[ \begin{pmatrix} pp^\mathrm{T} \left(t,x\right) l^2 & p^\mathrm{T} \left(t,x\right) l\,  \\ p \left(t,x\right) l & I_k \end{pmatrix} \times D^2g \right].
\end{equation}
\end{enumerate}

Thus the process
\begin{equation}\label{CtfB[x][z]}
C_t^{f}(X,q) :=f\left( t,X_t \right)-f \left( 0,X_0 \right)-\int_0^t \int_A \cL^{s,X_s,a}f\left( s,X_s \right)q\left(s,da\right)ds
\end{equation}
is a $\overline{\mathscr{G}}_T \vee \overline{\mathscr{F}}_t$-martingale under $\overline{\mathbb{R}}_W$; and the process
\begin{equation}\label{Dtg}
D_t^{g}(X,L,Y) := g \left(t,L_t,Y_t\right) - g\left(0,L_0,Y_0\right)-\int_0^t \cM^{s,X_s,L_s}g\left(s,L_s,Y_s\right)ds
\end{equation}
is a $\overline{\mathscr{F}}_t$-martingale under $\overline{\mathbb{R}}_W$.

\end{remark}

\subsection{A relaxed formulation}

Based on the martingale properties of the weak control in Remark~\ref{remarkmartingale}, we give the relaxed formulation. 

\begin{definition}\label{defrf}
(Relaxed Control Rules)
We denote by $\cR$ the set of relaxed control rules $\overline{\mathbb{R}}$, which are probability measures on $(\overline{\Omega},\overline{\mathscr{F}})$ such that $X_0=x_0$ and the followings hold.
\begin{enumerate}

\item
For each $f\in C_b^{\infty}([0,T]\times \mathbb{R}^d)$, the process $\left\{C_t^{f}(X,q)\right\}_{0\leq t\leq T}$ defined by~\eqref{CtfB[x][z]} is a $\left\{\overline{\mathscr{G}}_T \vee \overline{\mathscr{F}}_t \right\}_{0\leq t\leq T}$ -martingale under $\overline{\mathbb{R}}$;

\item
For each $g\in C_b^{\infty}([0,T]\times \mathbb{R} \times \mathbb{R}^k)$, the process $\left\{ D_t^{g}(X,L,Y) \right\}_{0\leq t\leq T}$ defined by~\eqref{Dtg} is a $\left\{\overline{\mathscr{F}}_t \right\}_{0\leq t\leq T}$ -martingale under $\overline{\mathbb{R}}$.

\end{enumerate}
\end{definition}

The value of the relaxed formulation is defined by
\begin{equation}\nonumber
V_R = \sup_{\overline{\mathbb{R}} \in\cR} \mathbb{E}^{\overline{\mathbb{R}}} \left[ \Gamma \right].
\end{equation}

Obviously, we have
\begin{equation}\label{equalSWR}
\cR^{sd}_{S} \subseteq \cR^s_S \subseteq \cR_{S} \subseteq \cR_W \subseteq \cR, \quad\quad \text{and} \quad\quad V^{sd}_{S} \leq V^s_S \leq V_{S} \leq V_W \leq V_R.
\end{equation}

\begin{remark}
	The relaxed formulation in Definition~\ref{defrf} is stated slightly differently with the one in~\cite[Definition 3.4]{MR957652}, but they are mathematically equivalent.
	We just add $L$ as an additional canonical process.
	In particular, the reward function $\Gamma$~\eqref{Gamma} is a continuous function on $\overline{\Omega}$ and then it is easier to prove the convergence result.
\end{remark}

\subsection{Approximating weak control rules by strong control rules}

In this subsection, we prove the equivalence between the weak problem and the strong problem. We first provide a technical lemma. Let
\begin{equation}\label{gammastar}
\gamma^* = \left( \Omega^*, \mathscr{F}^*, \mathbb{Q}^*, \mathbb{F}^*, \mathbb{G}^* , X^*, L^*, B^*, Y^*, u^*= \left\{ u^*_t \right\}_{0 \leq t \leq T} \right)
\end{equation}
be a weak control with a piecewise constant control process over a deterministic time grid $0 = \tau_0 < \tau_1 < \cdots < \tau_N =T$, so that $u^*_t = \mathbf{u}^*_i$ for $t\in[\tau_i, \tau_{i+1})$, where $\mathbf{u}^*_i$ is a $\mathscr{G}_{\tau_i}^*$-measurable random variable.

Further, let us enlarge the space $\Omega^*$ to $\widetilde{\Omega}^* := \Omega^* \times [0,1]^{N}$, on which we obtain an independent sequence of i.i.d. random variables $\left\{ Z_k \right\}_{0 \leq k \leq N-1}$ of uniform distribution on $[0,1]$. Denote the enlarged probability space by $\left( \widetilde{\Omega}^*, \widetilde{\mathscr{F}}^*, \widetilde{\mathbb{Q}}^* \right)$.

\begin{lemma}\label{iidZklemma}
There are measurable functions 
$$
\left\{ \Psi_i : C([0,\tau_i];\mathbb{R}^d) \times [0,1]^{i+1} \longrightarrow U \right\}_{0\leq i \leq N-1}
$$ 
such that
\be\nonumber
\widetilde{\mathbb{Q}}^* \circ \left( B^*, Y^*, \left\{ \Psi_i\left( Y^*_{[0,\tau_i]}, Z_0, \cdots , Z_i \right)\right\}_{0 \leq i \leq N-1} \right)^{-1} = {\mathbb{Q}}^* \circ \left( B^*, Y^*, \left\{ \mathbf{u}^*_i \right\}_{0 \leq i \leq N-1} \right)^{-1}.
\ee
\end{lemma}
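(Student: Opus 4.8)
I would construct the $\Psi_i$ recursively, feeding in one fresh uniform $Z_i$ at the $i$-th stage, and match the joint law one control coordinate at a time. The key preliminary observation is that, although $\mathbf{u}^*_i$ is only $\mathscr{G}^*_{\tau_i}$-measurable, its \emph{conditional} law, given everything relevant, is a functional of the observation path on $[0,\tau_i]$ and of the earlier controls only. Precisely, I would first prove: for each $i$ there is a probability kernel $\kappa_i$ from $C([0,\tau_i];\mathbb{R}^k)\times A^{i}$ to $A$ such that $\kappa_i(\,\cdot\mid Y^*|_{[0,\tau_i]},\mathbf{u}^*_0,\dots,\mathbf{u}^*_{i-1})$ is a version of the regular conditional law of $\mathbf{u}^*_i$ given $\sigma(B^*,Y^*,\mathbf{u}^*_0,\dots,\mathbf{u}^*_{i-1})$ under $\mathbb{Q}^*$. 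To see this, recode $(B^*,Y^*)$ as its restriction to $[0,\tau_i]$ together with its increment process on $[\tau_i,T]$. The tuple $(\mathbf{u}^*_i,\mathbf{u}^*_0,\dots,\mathbf{u}^*_{i-1},(B^*,Y^*)|_{[0,\tau_i]})$ is $\mathscr{F}^*_{\tau_i}$-measurable (the controls because $\mathscr{G}^*_{\tau_j}\subseteq\mathscr{G}^*_{\tau_i}\subseteq\mathscr{F}^*_{\tau_i}$ for $j\le i$, the paths because $(B^*,Y^*)$ is $\mathbb{F}^*$-adapted), whereas the increment of the $\mathbb{F}^*$-Brownian motion $(B^*,Y^*)$ on $[\tau_i,T]$ is independent of $\mathscr{F}^*_{\tau_i}$, so the increment process drops out of the conditioning. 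Next, since $B^*$ is in addition an $(\mathscr{G}^*_T\vee\mathscr{F}^*_t)_t$-Brownian motion (Definition~\ref{defwf}), telescoping over its increments yields $B^*|_{[0,T]}\perp\mathscr{G}^*_T$; as $(\mathbf{u}^*_0,\dots,\mathbf{u}^*_i,Y^*|_{[0,\tau_i]})$ is $\mathscr{G}^*_{\tau_i}$-measurable, the path $B^*|_{[0,\tau_i]}$ is independent of it and drops out as well. Existence of $\kappa_i$ as a genuine kernel is then disintegration on Polish spaces.

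\textbf{Construction and induction.} By a standard functional representation (randomization) lemma — every probability kernel into a Borel space is the image of Lebesgue measure on $[0,1]$ under a jointly measurable map — choose measurable maps $g_i:C([0,\tau_i];\mathbb{R}^k)\times A^{i}\times[0,1]\to A$ with $g_i(y,a_0,\dots,a_{i-1},\cdot)_{\#}\mathrm{Leb}_{[0,1]}=\kappa_i(\,\cdot\mid y,a_0,\dots,a_{i-1})$ for every $(y,a_0,\dots,a_{i-1})$. Set $\Psi_0(y,z_0):=g_0(y|_{[0,\tau_0]},z_0)$ and, inductively,
\[
\Psi_i(y,z_0,\dots,z_i):=g_i\big(y|_{[0,\tau_i]},\Psi_0(y,z_0),\dots,\Psi_{i-1}(y,z_0,\dots,z_{i-1}),z_i\big),
\]
which is indeed a measurable functional of $(y|_{[0,\tau_i]},z_0,\dots,z_i)$. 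Putting $\Psi^*_i:=\Psi_i(Y^*,Z_0,\dots,Z_i)$, I would show by induction on $i$ that $\widetilde{\mathbb{Q}}^*\circ(B^*,Y^*,\Psi^*_0,\dots,\Psi^*_{i-1})^{-1}=\mathbb{Q}^*\circ(B^*,Y^*,\mathbf{u}^*_0,\dots,\mathbf{u}^*_{i-1})^{-1}$; the case $i=0$ holds since $\widetilde{\mathbb{Q}}^*$ extends $\mathbb{Q}^*$ on $\mathscr{F}^*$. For the inductive step, on the enlarged space $Z_i$ is independent of $\mathscr{F}^*\vee\sigma(Z_0,\dots,Z_{i-1})$, so by the choice of $g_i$ the conditional law of $\Psi^*_i$ given $\sigma(B^*,Y^*,\Psi^*_0,\dots,\Psi^*_{i-1})$ equals $\kappa_i(\,\cdot\mid Y^*|_{[0,\tau_i]},\Psi^*_0,\dots,\Psi^*_{i-1})$, while by the previous paragraph the conditional law of $\mathbf{u}^*_i$ given $\sigma(B^*,Y^*,\mathbf{u}^*_0,\dots,\mathbf{u}^*_{i-1})$ equals $\kappa_i(\,\cdot\mid Y^*|_{[0,\tau_i]},\mathbf{u}^*_0,\dots,\mathbf{u}^*_{i-1})$. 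Integrating an arbitrary bounded measurable test functional against the two (equal, by the induction hypothesis) laws of the first $i$ coordinates then gives equality of the laws of the first $i+1$ coordinates; taking $i=N$ yields the claim.

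\textbf{Main difficulty.} The heart of the matter is the first step — guaranteeing that the $\Psi_i$ use only the observation path up to $\tau_i$ and neither $B^*$ nor the future of $Y^*$. This is exactly where the two independence properties of a weak control (that $(B^*,Y^*)$ is an $\mathbb{F}^*$-Brownian motion, and that $B^*$ is also an $(\mathscr{G}^*_T\vee\mathscr{F}^*_t)_t$-Brownian motion) are used; once the kernels $\kappa_i$ are in hand, the remainder is a routine recursive randomization via independent uniforms, together with standard disintegration and functional-representation facts on Polish spaces.
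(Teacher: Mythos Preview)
Your argument is correct. The paper's proof is organized slightly differently: it first invokes an external lemma (\cite[Lemma 4.11]{TanII}) to obtain the $\Psi_i$ matching only the law of $(Y^*,\{\mathbf{u}^*_i\})$, and then appends $B^*$ a posteriori by observing that $B^*\perp\mathscr{G}^*_T$ forces the joint law to factor as a product. You instead build the kernels $\kappa_i$ directly against the full $\sigma$-field $\sigma(B^*,Y^*,\mathbf{u}^*_0,\dots,\mathbf{u}^*_{i-1})$, and use the two independence properties of Definition~\ref{defwf} at that stage to strip out the future of $(B^*,Y^*)$ and then all of $B^*|_{[0,\tau_i]}$. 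The ingredients are identical---$B^*\perp\mathscr{G}^*_T$ via the $(\mathscr{G}^*_T\vee\mathscr{F}^*_t)_t$-Brownian property, and a recursive randomization with independent uniforms---so this is the same proof with a different packaging: yours is self-contained (it effectively reproves the cited lemma inline) and handles $B^*$ in one pass, while the paper's two-step decoupling is shorter once the citation is granted.
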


\begin{proof}
	First, there are measurable functions (see e.g. \cite[Lemma 4.11]{TanII})
	$$
	\left\{ \Psi_i : C([0,\tau_i];\mathbb{R}^d) \times [0,1]^{i+1} \longrightarrow U \right\}_{0\leq i \leq N-1}
	$$ 
	such that
\begin{equation}\nonumber
\widetilde{\mathbb{Q}}^* \circ \left( Y^*, \left\{ \Psi_i\left( Y^*_{[0,\tau_i]}, Z_0, \cdots , Z_i \right)\right\}_{0 \leq i \leq N-1} \right)^{-1} = 
{\mathbb{Q}}^* \circ \left( Y^*, \left\{ \mathbf{u}^*_i \right\}_{0 \leq i \leq N-1} \right)^{-1}.
\end{equation}
Since $B^*$ is independent with $\mathscr{G}^*_T$, and $\left( Y^*, \left\{ \mathbf{u}^*_i \right\}_{0 \leq i \leq N-1} \right)$ is adapted to $\mathbb{G}^*$, we know that the Brownian motion $B^*$ is independent with $\left( Y^*, \left\{ \mathbf{u}^*_i , Z_i \right\}_{0 \leq i \leq N-1} \right)$. It follows that
\be\nonumber
&&\displaystyle \widetilde{\mathbb{Q}}^* \circ \left( B^*, Y^*, \left\{ \Psi_i\left( Y^*_{[0,\tau_i]}, Z_0, \cdots , Z_i \right)\right\}_{0 \leq i \leq N-1} \right)^{-1} = \left[ \widetilde{\mathbb{Q}}^* \circ \left( B^*\right)^{-1} \right] \otimes \left[ \widetilde{\mathbb{Q}}^* \circ \left( Y^*, \left\{ \Psi_i\left( Y^*_{[0,\tau_i]}, Z_0, \cdots , Z_i \right)\right\}_{0 \leq i \leq N-1} \right)^{-1} \right] \\\nonumber
&=&\displaystyle \left[ {\mathbb{Q}}^* \circ \left( B^*\right)^{-1} \right] \otimes \left[ {\mathbb{Q}}^* \circ \left( Y^*, \left\{ \mathbf{u}^*_i \right\}_{0 \leq i \leq N-1} \right)^{-1} \right] = \left[ {\mathbb{Q}}^* \circ \left( B^*, Y^*, \left\{ \mathbf{u}^*_i \right\}_{0 \leq i \leq N-1} \right)^{-1} \right].
\ee
This completes the proof.
\end{proof}

Next, we show that the weak problem is equivalent to the strong problem.

\begin{lemma}\label{equalWSs}
With the piecewise constant control, the weak problem and the strong problem have the same value, i.e., $V^s_W=V_S^s$.
\end{lemma}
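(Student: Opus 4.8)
The plan is to prove $V^s_W = V^s_S$ by showing the two inequalities, with the nontrivial direction being $V^s_W \le V^s_S$ (the reverse inclusion $\cR^s_S \subseteq \cR^s_W$ being immediate from the definitions, since every strictly step rule is a weak control rule with piecewise constant control, so $V^s_S \le V^s_W$). So fix an arbitrary weak control $\gamma^*$ of the form \eqref{gammastar} with piecewise constant control $u^*_t = \mathbf{u}^*_i$ on $[\tau_i,\tau_{i+1})$; the goal is to produce a strong step strategy achieving (asymptotically, or exactly) the same reward $\mathbb{E}^{\overline{\mathbb{R}}_W}[\Gamma]$.

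The key device is Lemma~\ref{iidZklemma}: it gives measurable functions $\Psi_i$ on $C([0,\tau_i];\mathbb{R}^k)\times[0,1]^{i+1}$ so that, on the enlarged space $\widetilde{\Omega}^*$ carrying i.i.d.\ uniforms $\{Z_k\}$ independent of everything, the triple $\bigl(B^*, Y^*, \{\Psi_i(Y^*_{[0,\tau_i]},Z_0,\dots,Z_i)\}_i\bigr)$ has the same joint law as $\bigl(B^*, Y^*, \{\mathbf{u}^*_i\}_i\bigr)$. First I would transport everything to the \emph{strong} canonical setup: on $\Omega = \cC^m\times\cC^k$ enlarged by $N$ independent uniforms $Z_0,\dots,Z_{N-1}$ (which is still, up to a measure-preserving isomorphism, a standard setup, and the $Z_i$ can be realized as measurable functions of an extra Brownian coordinate or simply absorbed — but cleaner is to note El Karoui--Nguyen--Jeanblanc allow randomized strong controls, or to observe that the uniforms can be generated from $Y$ itself on a finer grid; I would follow whichever route the paper has set up, most likely just enlarging $\Omega$). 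Define the candidate strong control by the recursion $\widehat u_t := \Psi_i\bigl(Y_{[0,\tau_i]}, Z_0,\dots,Z_i\bigr)$ for $t\in[\tau_i,\tau_{i+1})$; this is $\mathbb{G}$-adapted (with the enlarged filtration), $A$-valued, piecewise constant on the deterministic grid, hence a step strategy, so its rule lies in $\cR^s_S$. Then solve the strong SDE system \eqref{eq:dynamicXL} with this control to get $(X^{\widehat u}, L^{\widehat u})$.

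The heart of the argument is then a law-identification: I claim $\mathbb{Q}\circ(X^{\widehat u}, L^{\widehat u}, Y, q^{\widehat u})^{-1} = \overline{\mathbb{R}}_W = \mathbb{Q}^\gamma\circ(X^\gamma,L^\gamma,Y^\gamma,q^\gamma)^{-1}$. This follows because on each interval $[\tau_i,\tau_{i+1})$ the pair $(X,L)$ is obtained from the \emph{same} SDE coefficients driven by $(B,Y)$ (resp.\ $(B^*,Y^*)$) with a control that is a fixed $\mathscr{G}_{\tau_i}$-measurable (resp.\ $\mathscr{G}^*_{\tau_i}$-measurable) random element, and by Lemma~\ref{iidZklemma} the inputs $(B,Y,\widehat u)$ and $(B^*,Y^*,u^*)$ agree in law including the joint law with the past. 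One makes this rigorous by induction on $i$: assuming the solutions agree in law on $[0,\tau_i]$ jointly with the relevant driving data, the strong uniqueness of the SDE on $[\tau_i,\tau_{i+1}]$ (under $(\mathbf{A1})$) lets one write $(X,L)|_{[\tau_i,\tau_{i+1}]}$ as a fixed measurable functional of $(X_{\tau_i},L_{\tau_i},\mathbf{u}^*_i, (B,Y)|_{[\tau_i,\tau_{i+1}]})$, and the same functional applies on both sides; since $(B,Y)$-increments on $[\tau_i,\tau_{i+1}]$ are independent of $\overline{\mathscr{F}}_{\tau_i}$ (Brownian property) and likewise on the $\gamma^*$ side, the joint laws propagate. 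Finally, since $\Gamma$ in \eqref{Gamma} is a measurable (indeed continuous) functional of $(X,L,q)$, equality of laws gives $\mathbb{E}^{\mathbb{Q}}[\Gamma(X^{\widehat u},L^{\widehat u},q^{\widehat u})] = \mathbb{E}^{\overline{\mathbb{R}}_W}[\Gamma]$, hence $V^s_S \ge \mathbb{E}^{\overline{\mathbb{R}}_W}[\Gamma]$, and taking the supremum over weak controls $\gamma^*$ yields $V^s_S \ge V^s_W$.

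The main obstacle is the law-identification step, specifically handling the filtration enlargement by the $Z_i$'s cleanly: one must ensure that after adjoining the uniforms, $(B,Y)$ is still a Brownian motion with respect to the enlarged filtration $\mathbb{F}\vee\sigma(Z_i)$ (true by independence), that $Y$ remains $\mathbb{G}\vee\sigma(Z_i)$-adapted and $B$ remains a $\mathscr{G}_T\vee\mathscr{F}_t$-Brownian motion in the enlarged sense, and that the resulting rule genuinely falls in the class $\cR^s_S$ as defined (the step-strategy functional $w_i$ now depends on the extra randomness, so one either needs the definition to permit randomized step strategies or to argue that randomization does not increase the value — the latter via a standard conditioning/Fubini argument showing some deterministic choice of the $Z_i$-argument does at least as well, or via the fact that the $Z_i$ can themselves be encoded into the Brownian path $Y$ at dyadic resolutions). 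I expect the paper resolves this by its convention that strong step rules are built from functionals on the enlarged space, so the bookkeeping reduces to invoking Lemma~\ref{iidZklemma} plus pathwise uniqueness interval by interval.
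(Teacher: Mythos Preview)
Your overall structure is right and you correctly identify Lemma~\ref{iidZklemma} as the key input, and you correctly identify the one real obstacle: the control $\widehat u_t = \Psi_i(Y_{[0,\tau_i]},Z_0,\dots,Z_i)$ depends on the auxiliary uniforms $Z_i$ and is therefore \emph{not} a strong step strategy in the sense of the paper. Your guess at the end --- that the paper resolves this by allowing strong step rules to be built on an enlarged space --- is wrong: the definition of $\cR^s_S$ requires $u^{s,\kappa}_t = w_i^{s,\kappa}(Y|_{[0,i\kappa]})$, a functional of $Y$ alone.

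The paper resolves the obstacle by the conditioning argument you mention only in passing. It stays on the weak control's own space, enlarged to $\widetilde\Omega^* = \Omega^* \times [0,1]^N$, defines $\widetilde u^*_t = \Psi_i(Y^*_{[0,\tau_i]},Z_0,\dots,Z_i)$, solves the SDE there to get $(\widetilde X^*,\widetilde L^*)$, and obtains $\widetilde{\mathbb R}^* = \overline{\mathbb R}^*$ directly from Lemma~\ref{iidZklemma} (no interval-by-interval induction needed, since $(X,L)$ is a measurable functional of $(B,Y,u)$ by strong uniqueness). Then it disintegrates: let $\{\widetilde{\mathbb Q}^*_z\}_{z\in[0,1]^N}$ be the regular conditional probabilities given $\sigma(Z_0,\dots,Z_{N-1})$. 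For a.e.\ $z$, under $\widetilde{\mathbb Q}^*_z$ the process $(B^*,Y^*)$ is still a Brownian motion, the SDE~\eqref{strongweakequal} still holds, and now $\widetilde u^*$ is adapted to the filtration generated by $Y^*$ alone (since $z$ is frozen). Hence $\widetilde{\mathbb R}^*_z \in \cR^s_S$, so $\mathbb E^{\widetilde{\mathbb R}^*_z}[\Gamma]\le V^s_S$, and integrating in $z$ gives $\mathbb E^{\overline{\mathbb R}^*}[\Gamma]\le V^s_S$. This is the missing step in your write-up; the rest of your plan is sound.
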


\begin{proof}
The proof is similar to that of~\cite[Theorem 4.10]{TanII}. For convenience of the reader, we give a full proof here. Let us fix an arbitrary weak control $\gamma^*$ defined in~\eqref{gammastar} with its corresponding weak control rule $\overline{\mathbb{R}}^*$, so that one can construct the functionals $\left\{ \Psi_i \right\}_{0\leq i \leq N-1}$ as in Lemma~\ref{iidZklemma}. Following the notations therein, in the probability space $\left( \widetilde{\Omega}^*, \widetilde{\mathscr{F}}^*, \widetilde{\mathbb{Q}}^* \right)$, let us define a control $\widetilde{u}^*_t = \Psi_i \left( Y^*_{[0,\tau_i]}, Z_0, \cdots, Z_i \right)$ for $t\in[\tau_i,\tau_{i+1})$, and processes $\left( \widetilde{X}^*, \widetilde{L}^* \right)$ by $\widetilde{\mathbb{Q}}^*$-a.s.,
\be
		 \left\{
		\ba{rcl}
		&&\displaystyle \widetilde{X}^*_t = {X}^*_0 + \int_0^t b \left(s,\widetilde{X}^*_s,\widetilde{u}^*_s \right) ds + \int_0^t \sigma \left(s,\widetilde{X}^*_s, \widetilde{u}^*_s \right) dB_s^*, \\[0.3cm]
		&&\displaystyle \widetilde{L}^*_t = 1 + \int_0^t p \left(s,\widetilde{X}^*_s \right)  \widetilde{L}^*_s \,dY^*_s.\ea
		\right.\label{strongweakequal}
	\ee
Note that the law $\widetilde{\mathbb{Q}}^* \circ \left( B^*, Y^*, \widetilde{u}^* \right)^{-1} = {\mathbb{Q}}^* \circ \left( B^*, Y^*, u^* \right)^{-1}$, then 
$$
\widetilde{\mathbb{R}}^* := \widetilde{\mathbb{Q}}^* \circ \left( \widetilde{X}^*, \widetilde{L}^*, Y^*, q^{\widetilde{u}^*} \right)^{-1} = {\mathbb{Q}}^* \circ \left( X^*, L^*, Y^*, q^{u^*} \right)^{-1} = \overline{\mathbb{R}}^*.
$$

Let $\left\{ \widetilde{\mathbb{Q}}^*_z \right\}_{z\in [0,1]^N}$ be a family of regular conditional distribution probability of $\widetilde{\mathbb{Q}}^*$ with respect to the $\sigma$-field generated by $\left\{ Z_i \right\}_{0\leq i \leq N-1}$. Then there is a $\widetilde{\mathbb{Q}}^*$-null set $\mathcal{N} \subset [0,1]^N$ such that for each $z\in [0,1]^N \backslash \mathcal{N}$, under $\widetilde{\mathbb{Q}}^*_z$, $(B^*,Y^*)$ is still a Brownian motion and~\eqref{strongweakequal} holds true. Notice that $\widetilde{u}^*$ is adapted to the (augmented) Brownian filtration generated by $Y^*$ under $\widetilde{\mathbb{Q}}^*_z$, thus $\widetilde{\mathbb{R}}^*_z := \widetilde{\mathbb{Q}}^*_z \circ \left( \widetilde{X}^*, \widetilde{L}^*, Y^*, q^{\widetilde{u}^*} \right)^{-1} \in \mathcal{R}_S^s$. It follows that $\mathbb{E}^{\widetilde{\mathbb{R}}^*_z} \left[ \Gamma \right] \leq V^s_S$ for each $z\in [0,1]^N \backslash \mathcal{N}$. And hence
$$
\mathbb{E}^{\overline{\mathbb{R}}^*} \left[ \Gamma \right] = \mathbb{E}^{\widetilde{\mathbb{R}}^*} \left[ \Gamma \right] = \int_{[0,1]^N} \mathbb{E}^{\widetilde{\mathbb{R}}^*_z} \left[ \Gamma \right] dz = \int_{[0,1]^N \backslash \mathcal{N}} \mathbb{E}^{\widetilde{\mathbb{R}}^*_z} \left[ \Gamma \right] dz \leq \int_{[0,1]^N \backslash \mathcal{N}} V^s_S dz = V^s_S.
$$
Thus we get $V^s_W \leq V^s_S$. This completes the proof.
\end{proof}

\subsection{Approximating relaxed control rules by strong/weak control rules}

	We simply recall the following approximation/equivalence results from~\cite{MR957652}.

	\begin{lemma} \label{V0sd}
		$\mathrm{(i)}$
		The set $V^0$ of atomic measures is dense in $V$. More precisely, there exists a sequence $\left\{ \psi_k \right\}_{k\geq 1}$ of measurable maps from $V$ into $V^0$, adapted (i.e., $\psi_k^{-1}(\mathscr{V}_t) \subseteq \mathscr{V}_t$) such that $\psi_k(q)$ converges weakly to $q$ for any $q\in V$. Moreover, we can choose $\psi_k(q)$ such that $\psi_k(q)$ are step (with respect to the time) measures.
	
		\vspace{0.5em}
		
		\noindent $\mathrm{(ii)}$ 
		Let Assumptions $(\mathbf{A1})$ - $(\mathbf{A2})$, and Assumption \ref{definitionYhi} hold true. The weak problem with piecewise constant control shares the same value with the relaxed problem, i.e., $V_R=V_W^s$.
	\end{lemma}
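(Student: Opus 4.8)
The plan is to establish the two assertions separately; both go back to~\cite{MR957652}, and I would reproduce the arguments as follows.

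For part~(i) --- the chattering lemma --- fix $q\in V$ and $k\ge1$. Partition $[0,T]$ into the blocks $I^k_j=[(j-1)T/k,jT/k)$, fix a finite Borel partition $\{V^k_1,\dots,V^k_{N_k}\}$ of the compact metric space $A$ with diameter at most $1/k$ and marked points $a^k_\ell\in V^k_\ell$, and on each block lay out consecutively the values $a^k_1,\dots,a^k_{N_k}$, holding $a^k_\ell$ over a subinterval of $I^k_j$ of length $\int_{I^k_j}q(s,V^k_\ell)\,ds$ (these lengths sum to $|I^k_j|$ since $q(s,A)\equiv1$). This defines an atomic, piecewise-constant-in-time measure $\psi_k(q)\in V^0$; since on $[0,t]$ the construction uses only $q|_{[0,t]}$, the map $\psi_k$ is $\mathscr{V}_t$-adapted and jointly measurable. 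As the blocks shrink and, by uniform continuity, each $\varphi\in C_b([0,T]\times A)$ is nearly constant in $t$ on a block while the sojourn times reproduce the time-average of $q$ over the block, one gets $\int_0^{\cdot}\int_A\varphi\,\psi_k(q)\to\int_0^{\cdot}\int_A\varphi\,q$ uniformly along a countable convergence-determining family, i.e. $\psi_k(q)\to q$ in $V$. This is~\cite[Lemma~3.2]{MR957652}.

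For part~(ii), one inequality is immediate: a weak control with piecewise constant control process is a weak control, so by Remark~\ref{remarkmartingale} its induced rule satisfies the two martingale problems of Definition~\ref{defrf}; hence $\cR^s_W\subseteq\cR$ and, $\Gamma$ in~\eqref{Gamma} being the common reward, $V^s_W\le V_R$. For the reverse inequality, fix $\overline{\mathbb{R}}\in\cR$. Under $\overline{\mathbb{R}}$ the $D^g$-martingale problem makes $Y$ a Brownian motion with $dL_t=p(t,X_t)L_t\,dY_t$, and the $C^f$-martingale problem makes $X$ a solution of the relaxed state martingale problem governed by the control measure $q$. Using part~(i), replace $q$ by the atomic step measures $q^k=\psi_k(q)$, which correspond to admissible $A$-valued controls $u^k$ that are piecewise constant in time; solving on a suitable stochastic basis the strict controlled equations
$$
X^k_t=x_0+\int_0^t b(s,X^k_s,u^k_s)\,ds+\int_0^t\sigma(s,X^k_s,u^k_s)\,dB_s,\qquad L^k_t=1+\int_0^t p(s,X^k_s)L^k_s\,dY_s,
$$
produces weak controls with piecewise constant control, hence rules $\overline{\mathbb{R}}^k\in\cR^s_W$. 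It then remains to check $\overline{\mathbb{R}}^k\Rightarrow\overline{\mathbb{R}}$ and $\mathbb{E}^{\overline{\mathbb{R}}^k}[\Gamma]\to\mathbb{E}^{\overline{\mathbb{R}}}[\Gamma]$: the former follows from the uniform moment and tightness bounds granted by~$(\mathbf{A1})$ and the stability of the relaxed martingale problem under weak convergence of the control measures (uniform continuity of $(b,\sigma)$); the latter from continuity of $\Gamma$ on $\overline\Omega$ together with the uniform integrability secured by $\sup_k\mathbb{E}[(L^k_T)^r]<\infty$ for all $r>1$ (boundedness of $p$) and $\sup_k\mathbb{E}[e^{c\sup_t|X^k_t|}]<\infty$ for all $c>0$ (boundedness of $b,\sigma$), which dominate the exponential growth in~$(\mathbf{A2})$. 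Thus $\mathbb{E}^{\overline{\mathbb{R}}}[\Gamma]\le V^s_W$, and taking the supremum over $\cR$ gives $V_R\le V^s_W$; the full construction is in~\cite[Section~3]{MR957652}.

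The main obstacle is the interplay of the last two points. Since $\sigma$ is control-dependent, one cannot bound $X^k-X$ by a Gronwall estimate, so the identification of the limit of $\overline{\mathbb{R}}^k$ must pass through the stability of the martingale problem under weak convergence of the relaxed control measures, as in~\cite{MR957652}; and since~$(\mathbf{A2})$ only imposes exponential growth, $\mathbb{E}[\Gamma]$ is not weakly continuous for free --- its continuity on $\cR$ rests on the uniform $L^r$-bounds for $L^k_T$ and the uniform exponential moments for $\sup_t|X^k_t|$ that the boundedness of $b,\sigma,p$ in~$(\mathbf{A1})$ provides. The measurable, adapted selection underlying $\psi_k$ in part~(i) is the remaining delicate point, but it is classical.
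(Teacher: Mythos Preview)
The paper itself gives no proof of this lemma; it simply recalls the results from~\cite{MR957652}. Your sketch is aligned with that reference and captures the essential ingredients.

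There is, however, one genuine slip in part~(i). Your chattering construction on the block $I^k_j=[(j-1)T/k,jT/k)$ requires the numbers $\int_{I^k_j}q(s,V^k_\ell)\,ds$, which depend on $q$ over the \emph{entire} block. Hence, for $t$ interior to $I^k_j$, the restriction $\psi_k(q)|_{[0,t]}$ depends on $q|_{[0,\,jT/k]}\supsetneq q|_{[0,t]}$, so your claim that ``on $[0,t]$ the construction uses only $q|_{[0,t]}$'' is false, and the adaptedness $\psi_k^{-1}(\mathscr{V}_t)\subseteq\mathscr{V}_t$ does not follow from it. The standard remedy (already in~\cite{MR957652}) is to delay by one block: on $I^k_j$ allocate the sojourn times according to the averages $\int_{I^k_{j-1}}q(s,V^k_\ell)\,ds$ computed on the previous block. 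This restores adaptedness and does not spoil the weak convergence, since the one-block shift vanishes as $k\to\infty$.

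For part~(ii) your outline is correct, but two points should be made explicit, as they are exactly where the argument would break without $(\mathbf{A1})$. First, starting from $\overline{\mathbb{R}}\in\cR$ you have only a martingale-problem description with no driving Brownian motion $B$; producing the weak controls $(X^k,L^k,B,Y,u^k)$ therefore requires an extension/representation step to manufacture $B$ with the right independence from $\mathscr{G}_T$. Second, concluding $\overline{\mathbb{R}}^k\Rightarrow\overline{\mathbb{R}}$ needs not only stability of the martingale problem under $q^k\to q$ but also \emph{uniqueness in law} of the relaxed martingale problem for the limiting $q$; under the Lipschitz condition in $(\mathbf{A1})$ this follows from pathwise uniqueness of the associated SDE after the representation. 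Both items are supplied in~\cite{MR957652}, so your citation covers them.
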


	\begin{proposition}\label{equalRSsd}
		Let Assumptions $(\mathbf{A1})$ - $(\mathbf{A2})$, and Assumption \ref{definitionYhi} hold true, tt holds that $V_R=V^{sd}_{S}$.
	\end{proposition}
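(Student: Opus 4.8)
The plan is to establish the chain $V_R = V^s_W = V^s_S \le V^{sd}_S \le V_S \le V_R$, which then forces all these values to coincide; in particular $V_R = V^{sd}_S$. The first two equalities are already available: $V_R = V^s_W$ is Lemma~\ref{V0sd}(ii), and $V^s_W = V^s_S$ is Lemma~\ref{equalWSs}. The inequalities $V^{sd}_S \le V_S \le V_R$ are contained in~\eqref{equalSWR}. So the entire content of the proposition reduces to the single remaining inequality $V^s_S \le V^{sd}_S$, i.e.\ every strictly step rule can be approximated by strictly \emph{discrete} step rules (those whose control at each period depends only on finitely many sampled observation values through a uniformly Lipschitz bounded function).

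First I would fix a step strategy $u^{s,\kappa}$ with $u^{s,\kappa}_t = w_i^{s,\kappa}(Y|_{[0,i\kappa]})$ on $[i\kappa,(i+1)\kappa)$, and its rule $\overline{\mathbb{R}}^s_S = \mathbb{Q}\circ(X^{u^{s,\kappa}},L^{u^{s,\kappa}},Y,q^{u^{s,\kappa}})^{-1}$. I would approximate each measurable functional $w_i^{s,\kappa}$ on $C([0,i\kappa];\mathbb{R}^k)$ in two stages: (a) discretize the path argument, replacing $Y|_{[0,i\kappa]}$ by the finite vector of sampled values $(Y_{r_j^i})_{j\le J_i}$ on a fine grid $\{r_j^i\}$ of mesh $1/m$, using that a continuous path is determined in the limit by its sampled values and that, under $\mathbb{Q}$, $Y$ is Brownian so its modulus of continuity is controlled; (b) mollify/truncate the resulting function on $\mathbb{R}^{k\times J_i}$ to make it uniformly Lipschitz and bounded, which is standard by convolution with a smooth kernel and truncation, with the approximation holding in $L^1(\mathbb{Q})$ (and the error can be made small because $A$ is compact, hence bounded, and because $w_i^{s,\kappa}$ is a.s.\ a continuity point of the limit after a small perturbation). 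This yields discrete step strategies $u^{sd,\kappa,m}$ with $u^{sd,\kappa,m}_t \to u^{s,\kappa}_t$ in $\mathbb{Q}$-probability (for a.e.\ $t$), as $m\to\infty$.

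Next I would propagate this convergence through the controlled system. Using Assumption~$(\mathbf{A1})$ (boundedness and Lipschitz continuity of $b,\sigma$ in $x$, continuity in $a$ via $d_A$), a standard Gronwall/BDG estimate gives $\mathbb{E}^{\mathbb{Q}}[\sup_{t\le T}|X^{u^{sd,\kappa,m}}_t - X^{u^{s,\kappa}}_t|^2]\to 0$; similarly, since $p$ is bounded and Lipschitz and $L^{u^{sd,\kappa,m}},L^{u^{s,\kappa}}$ are controlled in every $L^p(\mathbb{Q})$ (exponential martingales with bounded integrand), one gets $L^{u^{sd,\kappa,m}}_T \to L^{u^{s,\kappa}}_T$ in $L^2(\mathbb{Q})$. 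Hence $q^{u^{sd,\kappa,m}} \to q^{u^{s,\kappa}}$ weakly (in $V$) in $\mathbb{Q}$-probability and the joint laws $\overline{\mathbb{R}}^{sd,m}_S \Rightarrow \overline{\mathbb{R}}^s_S$ weakly on $\overline{\Omega}$. Since $\Gamma$ in~\eqref{Gamma} is continuous on $\overline{\Omega}$, the only remaining issue is uniform integrability to pass to the limit in $\mathbb{E}[\Gamma]$; this follows from Assumption~$(\mathbf{A2})$ (exponential growth of $K,G$), the exponential estimate~\eqref{xih4}-type control on $X$ (here just the continuous-time bound $\mathbb{E}^{\mathbb{Q}}[e^{c\sup_t|X_t|}]<\infty$, which holds since $\sigma$ is bounded), and Cauchy--Schwarz against the $L^2$-bounded family $\{L^{u^{sd,\kappa,m}}_T\}$. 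Therefore $\mathbb{E}^{\overline{\mathbb{R}}^s_S}[\Gamma] = \lim_m \mathbb{E}^{\overline{\mathbb{R}}^{sd,m}_S}[\Gamma] \le V^{sd}_S$, and taking the supremum over step rules gives $V^s_S \le V^{sd}_S$.

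The main obstacle I expect is step~(a)--(b): making precise that an arbitrary \emph{measurable} functional $w_i^{s,\kappa}$ can be approximated, in a way that survives being composed with the controlled SDE, by uniformly Lipschitz bounded functions of finitely many coordinates. The subtlety is that pointwise/a.e.\ convergence of the approximating functionals is not automatic for a merely measurable $w_i^{s,\kappa}$; one typically first approximates $w_i^{s,\kappa}$ in $L^1$ of the image law by a bounded \emph{continuous} functional of the path (Lusin/Tietze, using that $C([0,i\kappa];\mathbb{R}^k)$ is Polish and the law of $Y|_{[0,i\kappa]}$ is Radon), then restricts that continuous functional to sampled coordinates using uniform continuity on compacts together with tightness of Brownian paths, and only then mollifies. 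Handling the compositions so that the control convergence is strong enough to force $X$, $L$, and $q$ to converge jointly — rather than merely in law of the observation — is where the care lies, but since the control enters $b,\sigma$ only through the $d_A$-continuous coefficients and $A$ is compact, convergence in probability of $u^{sd,\kappa,m}_t$ to $u^{s,\kappa}_t$ for a.e.\ $t$ is exactly what the Gronwall estimate needs. (Alternatively, one may invoke the chattering-type density arguments already packaged in~\cite{MR957652}; I would cite Lemma~\ref{V0sd}(i) and the construction in~\cite[Section 4]{MR957652} to shortcut the measurable-functional approximation.)
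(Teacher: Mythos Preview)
Your proposal is correct and follows essentially the same route as the paper: use Lemma~\ref{V0sd}(ii) and Lemma~\ref{equalWSs} to get $V_R=V^s_W=V^s_S$, then approximate each measurable functional $w_i^{s,\kappa}$ by uniformly Lipschitz bounded functions of finitely many sampled values to obtain $V^s_S=V^{sd}_S$. The paper's proof states this last approximation as a one-line fact, whereas you have spelled out the Lusin/discretize/mollify mechanism and the stability of $(X^u,L^u,q^u)$ under control convergence; your added detail is sound and not a departure from the paper's argument.
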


	\begin{proof}
		According to Lemma~\ref{equalWSs} and Lemma~\ref{V0sd}, we have $V_S^s=V_R$. Since we can approximate a measurable function $w_i^{s,\kappa}(Y)$ on $C([0,i\kappa];\mathbb{R}^k)$ by 
		$$
		{w}_i^{sd,\kappa}\left(Y_{r_j^i},j\leq J_i,r_j^i \leq i\kappa \right),
		$$
		 where ${w}_i^{sd,\kappa}$ is a uniformly Lipschitz continuous and bounded function on $\mathbb{R}^{k\times J_i}$ with $J_i\in\mathbb{N}_+$, it follows that $
V^s_S=V^{sd}_{S}$. This completes the proof.
	\end{proof}

\section{Proof of the convergence result}\label{sectionproofconvergence}

	In this section, we complete the proof of the main convergence result in Theorem~\ref{ConvergenceTheorem}, which is based on the compactification technique (see also \cite{MR878312,MR957652,Pfeiffer,MR3226164}).
	As preparation, we will first provide some convergence results for the approximating processes.

	Recall that the canonical space $\overline{\Omega}$ is defined by $\overline{\Omega}:=\mathcal{D}^d \times \mathcal{D}^1 \times \mathcal{D}^k \times V$
	with canonical filtration $(\overline{\mathscr{F}}_t)_{t \in [0,T]}$ and canonical process $(X,L,Y,q)$. Throughout the section, we let Assumptions $(\mathbf{A1})$ - $(\mathbf{A2})$, and Assumption \ref{definitionYhi} hold true.

\subsection{Discrete-time control as control rules and its tightness}
\label{preRh}

	We will first redefine a discrete-time control as a probability measure on the canonical space $\left( \overline{\Omega}, \overline{\mathscr{F}} \right)$, 
	so that we can apply the weak convergence technique to derive the convergence of the scheme. 
	Given a discrete-time control  $\left\{ {Y}^h_i, {X}^{h,u^h}_i, {L}^{h,u^h}_i, u^h_i \right\}_{0\leq i \leq n}$,
	we define the continuous time process $\left( \widehat{Y}^h, \widehat{X}^{h,u^h}, \widehat{L}^{h,u^h}, u^h \right)$ by 
	\begin{equation}\nonumber
		\left( \widehat{Y}^h_t, \widehat{X}^{h,u^h}_t, \widehat{L}^{h,u^h}_t, u^h_t \right) 
		=
		\left(Y^h_i, X^{h, u^h}_i, L^{h, u^h}_i, u_i^h \right),
		\quad t\in[t_i,t_{i+1}), \quad i=0, \cdots, n-1.
	\end{equation}
	Notice that $u^h_i$ is $\sigma(Y^h_0, \cdots, Y^h_i)-$measurable, i.e.  $u^h_i:= v_i^h \left(Y^h_0,Y^h_1, \cdots,Y^h_i\right)$.

	We next provide some properties on process $(\widehat{X}^{h, u^h}, \widehat{L}^{h, u^h})$.

	\begin{lemma}\label{barXsupt}
		$\mathrm{(i)}$ There is a constant $C > 0$ independent of $h$, such that
		\begin{equation}\label{hatq}
			\sup_{n\in\mathbb{N}_+} \sup_{0\leq i \leq n-1} \mathbb{E}^{\mathbb{Q}^h} \left[ \left| \widehat{X}_{(i+1)h}^{h,{u^h}} - \widehat{X}_{ih}^{h,{u^h}} \right|^3 
			+
			\left| \widehat{L}_{(i+1)h}^{h,{u^h}} - \widehat{L}_{ih}^{h,{u^h}} \right|^3 \right] 
			\leq
			C h^\frac{3}{2}.
		\end{equation}

		\noindent $\mathrm{(ii)}$
		For any $c>0$, there is a constant $C(c)> 0$ independent of $h$, such that 
		\begin{equation}\nonumber
			\sup_{h>0} \sup_{u^h\in\mathcal{U}^h} \mathbb{E}^{\mathbb{Q}^h} \left[ \sup_{0 \leq t \leq T} \exp\left( c \left| \widehat{X}^{h,{u^h}}_t \right| \right) \right] \leq C(c).
		\end{equation}
		
		\noindent $\mathrm{(iii)}$ There is a constant $C > 0$ independent of $h$, such that
		\begin{equation}\nonumber
			\sup_{h>0} \sup_{u^h\in\mathcal{U}^h} \mathbb{E}^{\mathbb{Q}^h} \left[ \sup\limits_{0\leq s \leq T} \left| \widehat{L}^{h,u^h}_s \right|^{3} \right] \leq C.
		\end{equation}
		
	\end{lemma}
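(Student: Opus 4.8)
All three bounds rest on the same mechanism, so I would set up the common tools first. Since $\widehat X^{h,u^h}$ and $\widehat L^{h,u^h}$ are piecewise constant, $\sup_{0\le t\le T}|\widehat X^{h,u^h}_t|=\max_{0\le i\le n}|X^{h,u^h}_i|$ and likewise for $\widehat L^{h,u^h}$, while the increment over $[ih,(i+1)h)$ is exactly $H_h(t_i,X^{h,u^h}_i,u^h_i,U^h_{i+1})$ for $X$ and $L^{h,u^h}_i\,p(t_i,X^{h,u^h}_i)\!\cdot\!\eta^h_{i+1}$ for $L$. The key observation is that $X^{h,u^h}_i$ and $u^h_i$ are $\mathscr F^h_i$-measurable while $U^h_{i+1},\eta^h_{i+1}$ are independent of $\mathscr F^h_i$; hence, by the freezing lemma (conditioning and independence), conditioning on $\mathscr F^h_i$ reduces each one-step quantity to the pointwise estimates of Assumption~\ref{definitionYhi} evaluated at the frozen $(t_i,X^{h,u^h}_i,u^h_i)$. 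Combined with a discrete Gronwall iteration of the shape $(1+Ch)^n\le e^{Cnh}=e^{CT}$, this closes each estimate uniformly in $h$ and $u^h$.

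For $\mathrm{(ii)}$, from $|X^{h,u^h}_i|\le|x_0|+\sum_{j=0}^{n-1}|H_h(t_j,X^{h,u^h}_j,u^h_j,U^h_{j+1})|$ one gets $\exp\!\big(c\sup_t|\widehat X^{h,u^h}_t|\big)\le e^{c|x_0|}\prod_{j=0}^{n-1}\exp\!\big(c|H_h(t_j,X^{h,u^h}_j,u^h_j,U^h_{j+1})|\big)$. Conditioning successively on $\mathscr F^h_{n-1},\mathscr F^h_{n-2},\dots$ and invoking \eqref{xih4} produces a factor at most $1+C(c)h$ at each step, giving the total bound $e^{c|x_0|}(1+C(c)h)^n\le C(c)$.

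For $\mathrm{(iii)}$, I would first note that $L^{h,u^h}$ is an $\mathbb F^h$-martingale under $\mathbb Q^h$, since $\mathbb E^{\mathbb Q^h}[\,p(t_i,X^{h,u^h}_i)\!\cdot\!\eta^h_{i+1}\mid\mathscr F^h_i]=0$. Writing $\xi_i:=p(t_i,X^{h,u^h}_i)\!\cdot\!\eta^h_{i+1}$ and using the elementary inequality $|1+\xi|^3\le 1+3\xi+3\xi^2+|\xi|^3$, valid for all $\xi\in\mathbb R$, then taking conditional expectation with $\mathbb E[\xi_i\mid\mathscr F^h_i]=0$, $\mathbb E[\xi_i^2\mid\mathscr F^h_i]=h\,|p(t_i,X^{h,u^h}_i)|^2\le C_1^2h$ and $\mathbb E[|\xi_i|^3\mid\mathscr F^h_i]\le C_1^3\,\mathbb E[|\eta^h_{i+1}|^3]\le Ch^{3/2}$ (from \eqref{propertyetahuh} and $(\mathbf{A1})$), we obtain $\mathbb E^{\mathbb Q^h}[|L^{h,u^h}_{i+1}|^3]\le(1+Ch)\,\mathbb E^{\mathbb Q^h}[|L^{h,u^h}_i|^3]$, hence $\sup_{0\le i\le n}\mathbb E^{\mathbb Q^h}[|L^{h,u^h}_i|^3]\le(1+Ch)^n\le e^{CT}$. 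Doob's $L^3$ maximal inequality applied to the nonnegative submartingale $|L^{h,u^h}|$ then gives $\mathbb E^{\mathbb Q^h}[\sup_s|\widehat L^{h,u^h}_s|^3]\le (3/2)^3e^{CT}$. This is the one genuinely non-routine step: one must use the centering $\mathbb E[\xi_i\mid\mathscr F^h_i]=0$ so that the per-step factor is $1+O(h)$ and not $1+O(\sqrt h)$ — the latter being fatal after $n=T/h$ iterations — and the cubic inequality above is precisely what converts the second-order smallness of $\xi_i$ into the required $O(h)$.

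Finally, for $\mathrm{(i)}$: the $X$-increment over $[ih,(i+1)h)$ is $H_h(t_i,X^{h,u^h}_i,u^h_i,U^h_{i+1})$, so conditioning on $\mathscr F^h_i$ and using \eqref{xih3} gives $\mathbb E^{\mathbb Q^h}[|\widehat X^{h,u^h}_{(i+1)h}-\widehat X^{h,u^h}_{ih}|^3]\le Ch^{3/2}$; the $L$-increment is $L^{h,u^h}_i\,\xi_i$, so conditioning on $\mathscr F^h_i$, using $\mathbb E[|\xi_i|^3\mid\mathscr F^h_i]\le Ch^{3/2}$, and invoking the uniform bound $\sup_i\mathbb E^{\mathbb Q^h}[|L^{h,u^h}_i|^3]\le C$ established in $\mathrm{(iii)}$ gives $\mathbb E^{\mathbb Q^h}[|\widehat L^{h,u^h}_{(i+1)h}-\widehat L^{h,u^h}_{ih}|^3]\le Ch^{3/2}$, all uniform in $h,i,u^h$. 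Thus I would organise the argument as $\mathrm{(ii)}$, then the $L^3$-recursion and $\mathrm{(iii)}$, then $\mathrm{(i)}$; apart from the Gronwall/cubic point above, the remainder is routine bookkeeping with the freezing lemma.
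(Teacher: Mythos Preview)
Your argument is correct, and in fact tighter in its logical ordering than the paper's. The paper proves the three items in the stated order $(\mathrm i)$--$(\mathrm{ii})$--$(\mathrm{iii})$; for $(\mathrm i)$ it simply says ``direct consequence of \eqref{propertyetahuh} and \eqref{propertyxihuh}'', which is fine for the $X$-increment but, as you noticed, silently presupposes the bound $\sup_i\mathbb E[|L^{h,u^h}_i|^3]\le C$ for the $L$-increment. Your reordering $(\mathrm{ii})\to(\mathrm{iii})\to(\mathrm i)$ makes this dependency explicit. For $(\mathrm{ii})$ the paper takes a detour through a submartingale argument: it observes that $\big|X^{h,u^h}_k-\sum_{i<k}b(t_i,X_i,u_i)h\big|$ is a submartingale, applies Doob to its exponential, and only then passes to the product $\prod_i e^{c|H_h(\cdot)|}$ and invokes \eqref{xih4}; your route via the crude bound $\max_i|X_i|\le|x_0|+\sum_j|H_h(\cdot)|$ reaches the same product immediately and avoids Doob altogether. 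For $(\mathrm{iii})$ the paper uses the discrete BDG inequality on the martingale $\sum p(t_m,X_m)L_m\eta_{m+1}$, followed by the power-mean estimate $(\sum_m a_m^2)^{3/2}\le n^{1/2}\sum_m a_m^3$ and a discrete Gronwall; your approach via the elementary cubic inequality $|1+\xi|^3\le 1+3\xi+3\xi^2+|\xi|^3$ gives the one-step recursion $\mathbb E[|L_{i+1}|^3]\le(1+Ch)\mathbb E[|L_i|^3]$ directly and then lifts to the supremum by Doob. Both routes yield the same constants up to universal factors; yours is more self-contained (no BDG needed) and isolates the essential cancellation $\mathbb E[\xi_i\mid\mathscr F^h_i]=0$ very clearly, while the paper's BDG route generalises more readily to higher moments.
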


	\begin{proof}
	$\mathrm{(i)}$
	The inequality in \eqref{hatq} is a direct consequence of~\eqref{propertyetahuh} and~\eqref{propertyxihuh}.

	\vspace{0.5em}

	\noindent $\mathrm{(ii)}$
	For the exponential integrability of $\widehat{X}^{h,u^h}$, 
	we notice that the discrete process $\left\{\left| X_k^{h,u^h} - \sum\limits_{i=0}^{k-1} b \left(t_i,X_i^{h,{u^h}},u_i^h \right) h \right| \right\}_{0 \leq k \leq n}$ is a submartingale.
	Hence by Jensen's inequality, for every $c>0$, the discrete process 
	$$
		\left\{ \exp\left(c\left| X_k^{h,u^h} - \sum\limits_{i=0}^{k-1} b \left( t_i,X_i^{h,{u^h}},u_i^h \right)h \right|\right) \right\}_{0 \leq k \leq n}
	$$ 
	is also a positive submartingale. 
	In view of~\eqref{xih4} and Doob's inequality, it holds that
	\be\nonumber
	  \mathbb{E}^{\mathbb{Q}^h} \left[ \sup_{0\leq k \leq n} e^{ c \left| X_k^{h,u^h} \right| } \right] &\leq&\displaystyle C \mathbb{E}^{\mathbb{Q}^h} \left[ \sup_{0\leq k \leq n} \exp\left( c \left| X_k^{h,u^h} - \sum\limits_{i=0}^{k-1} b \left( t_i,X_i^{h,{u^h}},u_i^h \right) h \right| \right) \right] \\\nonumber
		&\leq&\displaystyle C \mathbb{E}^{\mathbb{Q}^h} \left[ \exp\left( c \left| X_n^{h,u^h} - \sum\limits_{i=0}^{n-1} b \left( t_i,X_i^{h,{u^h}},u_i^h \right) h \right| \right) \right] \leq C \mathbb{E}^{\mathbb{Q}^h} \left[ \exp\left( c \left| X_n^{h,u^h} \right| \right) \right] \\\nonumber
		&\leq&\displaystyle  C \mathbb{E}^{\mathbb{Q}^h} \left[ \prod_{i=0}^{n-1} \exp\left( c \left| H_h \left( t_i,X_i^{h,{u^h}},u_i^h,U_{i+1}\right) \right| \right) \right] \leq C(1+Ch)^n \leq C,
	\ee
	where the constant $C$ may depend on $c$, but not on $h$ and $u^h$. 

	\vspace{0.5em}
	
	\noindent $\mathrm{(iii)}$ 
	For $i=1,2, \cdots, n$, we have 
	$$
		{L}^{h,u^h}_{i} = 1 + \sum_{m=0}^{i-1} p \left(t_m,{X}_m^{h,{u^h}}\right) {L}^{h,u^h}_m\,\eta^{h}_{m+1}. 
	$$
According to the discrete BDG inequality, we get
\be\nonumber
 \mathbb{E}^{\mathbb{Q}^h} \left[ \sup_{0 \leq i \leq l+1} \left|{L}^{h,u^h}_{i}\right|^{3} \right] &\leq&\displaystyle C + C \mathbb{E}^{\mathbb{Q}^h} \left[ \sup_{0 \leq i \leq l}\left| \sum_{m=0}^{i} p \left(t_m,{X}_m^{h,{u^h}}\right) {L}^{h,u^h}_m\,\eta^{h}_{m+1} \right|^3\right]  \leq C + C \mathbb{E}^{\mathbb{Q}^h} \left[ \left( \sum_{m=0}^l \left| p \left(t_m,{X}_m^{h,{u^h}}\right) {L}^{h,u^h}_m\,\eta^{h}_{m+1} \right|^2 \right)^\frac{3}{2} \right]\\\nonumber
&\leq&\displaystyle C + C \mathbb{E}^{\mathbb{Q}^h} \left[ n^\frac{1}{2} \sum_{m=0}^l \left| {L}^{h,u^h}_m\,\eta^{h}_{m+1} \right|^3 \right] \leq C + C h  \sum_{m=0}^l \mathbb{E}^{\mathbb{Q}^h} \left[ \left| {L}^{h,u^h}_m \right|^3 \right] \leq C + C h  \sum_{m=0}^l \mathbb{E}^{\mathbb{Q}^h} \left[ \sup_{0 \leq i \leq m}\left| {L}^{h,u^h}_i \right|^3 \right].
\ee
Then using Gronwall inequality, we have 
$$
\mathbb{E}^{\mathbb{Q}^h} \left[ \sup\limits_{0\leq s \leq T} \left| \widehat{L}^{h,u^h}_s \right|^{3} \right] \leq \mathbb{E}^{\mathbb{Q}^h} \left[ \sup\limits_{0\leq i \leq n} \left|{L}^{h,u^h}_{i}\right|^{3} \right] \leq C,
$$
where the constant $C$ is independent with $h$ and $u^h$. This completes the proof.
\end{proof}

	Next, for every discrete-time control $u^h$, let us define 
	$$
		q^{h,{u^h}}(dt,da):=\delta_{u_t^h}(da)dt.
	$$
	and
	$$
		\cR_h = \left\{ \overline{\mathbb{R}}_{h,{u^h}}: u^h\in\cU_h \right\},
		\quad\mbox{with}\quad
		\overline{\mathbb{R}}_{h,{u^h}} = \mathbb{Q}^h \circ ( \widehat{X}^{h,{u^h}}, \widehat{L}^{h,{u^h}}, {\widehat{Y}^h},q^{h,{u^h}})^{-1}.
	$$
	Then it is clear that
	\begin{equation}\label{defbarVhcanonical}
		V_h = \sup_{\overline{\mathbb{R}}_{h,{u^h}}\in\cR_h}\mathbb{E}^{\overline{\mathbb{R}}_{h,{u^h}}} \left[ \Gamma \right].
	\end{equation}

To study the convergence of approximating processes $ \left( \widehat{X}^{h,{u^h}}, \widehat{L}^{h,{u^h}}, {\widehat{Y}^h},q^{h,{u^h}} \right) $, we first need to show the tightness of the laws of these processes $\left\{ \overline{\mathbb{R}}_{h,{u^h}}: u^h\in\cU_h \right\}_{h>0}$ on the enlarged canonical space $(\overline{\Omega},\overline{\mathscr{F}})$.

	\begin{lemma}\label{precompactRhLemma}
		The collection of measures 
		$$
		\left\{ \overline{\mathbb{R}}_{h,{u^h}} = \mathbb{Q}^h \circ \left( \widehat{X}^{h,{u^h}}, \widehat{L}^{h,{u^h}}, {\widehat{Y}^h},q^{h,{u^h}} \right)^{-1} : u^h\in\cU_h \right\}_{h>0}
		$$ 
		on $\overline{\Omega}$ is tight.
	\end{lemma}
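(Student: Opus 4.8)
The plan is to establish tightness on the product space $\overline{\Omega}=\mathcal{D}^d\times\mathcal{D}^1\times\mathcal{D}^k\times V$ one marginal at a time, using that on a finite product of Polish spaces a family of laws is tight as soon as each family of coordinate marginals is tight. The $q$-coordinate costs nothing: $V$ is a closed subset of $\mathbf{M}([0,T]\times A)$, which is compact because $[0,T]\times A$ is compact and every element has fixed total mass $T$; hence $V$ is compact and the family of $q$-marginals is automatically tight. It therefore remains to treat the three c\`adl\`ag coordinates $\widehat{Y}^h$, $\widehat{X}^{h,u^h}$ and $\widehat{L}^{h,u^h}$, for which I would verify the classical Aldous criterion for tightness in $\mathcal{D}([0,T],\mathbb{R}^\ell)$: (a) $\sup_{h,u^h}\mathbb{E}^{\mathbb{Q}^h}[|Z^h_t|^2]<\infty$ for each $t$ (so every one-dimensional marginal is tight), and (b) for every $\epsilon>0$, $\lim_{\delta\downarrow0}\limsup_{h\to0}\sup_{\tau\le\sigma\le\tau+\delta}\mathbb{Q}^h(|Z^h_\sigma-Z^h_\tau|>\epsilon)=0$, the supremum over stopping times of the interpolated filtration bounded by $T$.

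For $\widehat{Y}^h$, (a) is clear since $\mathbb{E}^{\mathbb{Q}^h}[|\widehat{Y}^h_t|^2]\le kT$ by \eqref{propertyetahuh}, and for (b) one uses that $\widehat{Y}^h$ is a martingale whose predictable quadratic variation grows by $hI_k$ per step, so that $\mathbb{E}^{\mathbb{Q}^h}[|\widehat{Y}^h_\sigma-\widehat{Y}^h_\tau|^2]\le C(\delta+h)$ by optional stopping, and (b) follows by Markov's inequality. For $\widehat{X}^{h,u^h}$ I would write $X^{h,u^h}_k=x_0+A^h_k+M^h_k$ with $A^h_k:=\sum_{i<k}b(t_i,X^{h,u^h}_i,u^h_i)\,h$ and $M^h$ the remaining martingale: the finite-variation part satisfies $|A^h_\sigma-A^h_\tau|\le C(\delta+h)$ deterministically since $b$ is bounded by $C_1$, while $(\mathbf{A1})$ and \eqref{xih2} show that $\langle M^h\rangle$ grows by at most $Ch$ per step, hence $\mathbb{E}^{\mathbb{Q}^h}[|M^h_\sigma-M^h_\tau|^2]\le C(\delta+h)$; part (a) for $\widehat{X}^{h,u^h}$ is covered by the uniform exponential moment bound in Lemma~\ref{barXsupt}(ii). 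For $\widehat{L}^{h,u^h}$, a martingale with increments $L^{h,u^h}_i\,p(t_i,X^{h,u^h}_i)\cdot\eta^h_{i+1}$, the predictable quadratic variation grows by $(L^{h,u^h}_i)^2|p(t_i,X^{h,u^h}_i)|^2h\le\|p\|_\infty^2\big(\sup_s|\widehat{L}^{h,u^h}_s|\big)^2h$ per step, so optional stopping and the uniform bound $\mathbb{E}^{\mathbb{Q}^h}[\sup_s|\widehat{L}^{h,u^h}_s|^3]\le C$ from Lemma~\ref{barXsupt}(iii) give $\mathbb{E}^{\mathbb{Q}^h}[|\widehat{L}^{h,u^h}_\sigma-\widehat{L}^{h,u^h}_\tau|^2]\le C(\delta+h)$, while (a) follows from the same bound.

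Two points must be checked along the way: every constant produced above depends only on $C_1$, $\|p\|_\infty$, $T$ and the constants in Assumption~\ref{definitionYhi}, hence not on $u^h\in\cU_h$ nor on $h$, so all the tightness estimates are uniform in the control; and the maximal jumps $\max_i|\eta^h_{i+1}|$, $\max_i|H_h(t_i,X^{h,u^h}_i,u^h_i,U^h_{i+1})|$ and the corresponding jumps of $\widehat{L}^{h,u^h}$ tend to $0$ in $\mathbb{Q}^h$-probability (a union bound with \eqref{propertyetahuh} and \eqref{xih3}), so that all weak limit points are supported on continuous paths. Putting the four marginal conclusions together yields the tightness of $\{\overline{\mathbb{R}}_{h,u^h}:u^h\in\cU_h\}_{h>0}$ on $\overline{\Omega}$. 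I expect the Aldous estimate (b) for $\widehat{L}^{h,u^h}$ to be the delicate point: unlike $\widehat{Y}^h$, its martingale increments are not dominated by a deterministic multiple of $\sqrt{h}$ but involve $L^{h,u^h}$ itself, so one has to insert the a priori $L^3$-bound on $\sup_s|\widehat{L}^{h,u^h}_s|$ and make sure it is genuinely uniform over controls; a minor additional technicality is the passage from increments over grid windows $[t_i,t_j]$ to increments over arbitrary stopping-time windows, which is exactly what generates the harmless $O(h)$ terms above.
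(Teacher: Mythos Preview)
Your argument is correct and complete; it simply chooses a different standard route to tightness in the Skorokhod space than the paper does. The paper handles the $q$-marginal exactly as you do (compactness of $V$), but for the three c\`adl\`ag coordinates it does not run Aldous' criterion. Instead it invokes directly the third-moment increment bound already recorded in Lemma~\ref{barXsupt}$\mathrm{(i)}$,
\[
\mathbb{E}^{\mathbb{Q}^h}\Big[\big|\widehat{X}^{h,u^h}_{(i+1)h}-\widehat{X}^{h,u^h}_{ih}\big|^3+\big|\widehat{L}^{h,u^h}_{(i+1)h}-\widehat{L}^{h,u^h}_{ih}\big|^3\Big]\le Ch^{3/2},
\]
together with \eqref{propertyetahuh} and \eqref{xih3}, and cites the Kolmogorov-type criterion (Jacod--Shiryaev, Proposition~VI.3.26, and Stroock--Varadhan, Theorem~1.4.11) that turns such uniform moment bounds on one-step increments of piecewise-constant processes into tightness in $\mathcal{D}$. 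Thus the paper's proof is essentially a two-line citation once Lemma~\ref{barXsupt}$\mathrm{(i)}$ is in hand.

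Your Aldous approach trades that citation for an explicit martingale/finite-variation decomposition and optional stopping. It is perfectly valid; the delicate point you flag for $\widehat{L}^{h,u^h}$ is handled correctly, since the predictable quadratic variation over a window of length $\le\delta$ is dominated by $\|p\|_\infty^2\,(\sup_s|\widehat{L}^{h,u^h}_s|)^2(\delta+h)$ and Lemma~\ref{barXsupt}$\mathrm{(iii)}$ makes the expectation uniformly bounded in $h$ and $u^h$. The extra remark that the maximal jumps vanish (hence limit points are supported on $\mathcal{C}$) is a bonus: it is not needed for tightness itself, and the paper establishes the continuous-path support only implicitly later, through the martingale-problem identification of cluster points in Lemma~\ref{Rhtight}.
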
 

\begin{proof}

By~\eqref{xih3} and~\eqref{hatq}, it follows the tightness of $\left\{ \overline{\mathbb{R}}_{h,{u^h}}\big|_{\mathcal{D}^d}: u^h\in\cU_h \right\}_{h>0}$, 
	see e.g.~\cite[Proposition VI.3.26]{jacod2013limit} together with \cite[Theorem 2.4.10, Problem 2.4.11]{Karatzas} or \cite[Theorem 1.4.11]{MR532498}.
	Similarly, by~\eqref{propertyetahuh},~\eqref{hatq} and Lemma~\ref{barXsupt}, we know that the set of measures $\left\{ \overline{\mathbb{R}}_{h,{u^h}}\big|_{\mathcal{D}^1 \times \mathcal{D}^k}: u^h\in\cU_h \right\}_{h>0}$ on $\mathcal{D}^1 \times \mathcal{D}^k$ is tight.

Then, we prove that the set of measures $\left\{ \overline{\mathbb{R}}_{h,{u^h}}\big|_{V}: u^h\in\cU_h \right\}_{h>0}$ on $V$ is tight. Note that $A$ is a compact Polish space. According to Prokhorov's theorem, we know that $\mathbf{M}([0,T]\times A)$ is compact under the weak convergence topology. Thus $V$ is also compact as a closed subset of $\mathbf{M}([0,T]\times A)$. According to Prokhorov's theorem, the class of probability measures $\left\{ \overline{\mathbb{R}}_{h,{u^h}}\big|_{V} \right\}$ on $V$ is tight.  This completes the proof.
\end{proof}

\subsection{A completion functional space on $\overline{\Omega}$}

To prove the convergence results, we follow~\cite{MR3226164} to introduce a space $\overline{L}^1_*$ of random variables on $\overline{\Omega}$, and prove that the reward function $\Gamma$ defined in~\eqref{Gamma} belongs to this space. Let $\overline{\cR}^* := \left( \mathop{\bigcup}\limits_{h>0} \cR_h \right) \cup \cR$,
and define a norm $\left\| \,\cdot\, \right\|_*$ for random variables on $\overline{\Omega}$ by $\left\| \xi \right\|_* = \sup\limits_{\overline{\mathbb{P}} \in \overline{\cR}^* } \mathbb{E}^{\overline{\mathbb{P}}} \left[ \,\left| \xi \right|\, \right]$. Denote by $\overline{L}^1_*$ the completion space of $C_b \left(\overline{\Omega}\right)$ under the norm $\left\| \,\cdot\, \right\|_*$.

Firstly, we derive a convergence result for random variables in $\overline{L}^1_*$.

\begin{lemma}\label{L1*convergence}
Suppose that $\xi \in \overline{L}^1_*$, and $\left\{ \overline{\mathbb{P}}_n \right\}_{n\geq 1} \subseteq \overline{\cR}^*$ such that $ \overline{\mathbb{P}}_n  \longrightarrow \overline{\mathbb{P}}_\infty \in \overline{\cR}^*$ weakly. Then $\mathbb{E}^{\overline{\mathbb{P}}_n} \left[ \xi \right] \longrightarrow \mathbb{E}^{\overline{\mathbb{P}}_\infty} \left[ \xi \right]$.
\end{lemma}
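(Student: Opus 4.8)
The plan is to exploit that $\overline{L}^1_*$ is, by construction, the $\|\cdot\|_*$-completion of $C_b(\overline{\Omega})$, and that the norm $\|\cdot\|_*$ dominates $\mathbb{E}^{\overline{\mathbb{P}}}[\,|\cdot|\,]$ simultaneously for every $\overline{\mathbb{P}}\in\overline{\cR}^*$. First I would observe that for each $\overline{\mathbb{P}}\in\overline{\cR}^*$ the linear functional $\phi\mapsto\mathbb{E}^{\overline{\mathbb{P}}}[\phi]$ on $C_b(\overline{\Omega})$ satisfies $|\mathbb{E}^{\overline{\mathbb{P}}}[\phi]|\le\mathbb{E}^{\overline{\mathbb{P}}}[\,|\phi|\,]\le\|\phi\|_*$, hence extends uniquely, by uniform continuity (the bounded linear transformation theorem), to a bounded linear functional on $\overline{L}^1_*$, still denoted $\mathbb{E}^{\overline{\mathbb{P}}}[\,\cdot\,]$, satisfying $|\mathbb{E}^{\overline{\mathbb{P}}}[\xi]|\le\|\xi\|_*$ for all $\xi\in\overline{L}^1_*$. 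Since the measures $\overline{\mathbb{P}}_1,\overline{\mathbb{P}}_2,\dots$ and $\overline{\mathbb{P}}_\infty$ all belong to $\overline{\cR}^*$ by hypothesis, the quantities $\mathbb{E}^{\overline{\mathbb{P}}_n}[\xi]$ and $\mathbb{E}^{\overline{\mathbb{P}}_\infty}[\xi]$ are well defined and controlled by $\|\xi\|_*$.

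Next, fix $\varepsilon>0$. By density of $C_b(\overline{\Omega})$ in $\overline{L}^1_*$, choose $\phi\in C_b(\overline{\Omega})$ with $\|\xi-\phi\|_*<\varepsilon$. Then the approximation error is controlled uniformly in $n$:
\[
\big|\,\mathbb{E}^{\overline{\mathbb{P}}_n}[\xi]-\mathbb{E}^{\overline{\mathbb{P}}_n}[\phi]\,\big|\;\le\;\|\xi-\phi\|_*\;<\;\varepsilon,\qquad n\in\{1,2,\dots\}\cup\{\infty\}.
\]
On the other hand, since $\phi$ is bounded and continuous on $\overline{\Omega}$ and $\overline{\mathbb{P}}_n\to\overline{\mathbb{P}}_\infty$ weakly, the definition of weak convergence gives $\mathbb{E}^{\overline{\mathbb{P}}_n}[\phi]\to\mathbb{E}^{\overline{\mathbb{P}}_\infty}[\phi]$. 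A triangle inequality then yields
\[
\limsup_{n\to\infty}\big|\,\mathbb{E}^{\overline{\mathbb{P}}_n}[\xi]-\mathbb{E}^{\overline{\mathbb{P}}_\infty}[\xi]\,\big|\;\le\;2\varepsilon+\limsup_{n\to\infty}\big|\,\mathbb{E}^{\overline{\mathbb{P}}_n}[\phi]-\mathbb{E}^{\overline{\mathbb{P}}_\infty}[\phi]\,\big|\;=\;2\varepsilon,
\]
and letting $\varepsilon\downarrow 0$ completes the argument.

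I do not expect a genuine obstacle here: the proof is soft functional analysis. The only point demanding care is the well-definedness of $\mathbb{E}^{\overline{\mathbb{P}}}[\xi]$ for an abstract element $\xi$ of the completion, together with the fact that the approximation error $|\mathbb{E}^{\overline{\mathbb{P}}}[\xi]-\mathbb{E}^{\overline{\mathbb{P}}}[\phi]|$ is majorized by $\|\xi-\phi\|_*$ uniformly over all $\overline{\mathbb{P}}\in\overline{\cR}^*$, hence over the whole sequence at once; this uniformity, which would fail for an ordinary $\overline{\mathbb{P}}$-dependent $L^1$ space, is precisely the reason the auxiliary space $\overline{L}^1_*$ is introduced. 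The real effort is reserved for the subsequent step, namely checking that $\Gamma$ from~\eqref{Gamma} — which enjoys only exponential, not bounded, growth — actually lies in $\overline{L}^1_*$; that is where the moment and exponential-integrability estimates of Lemma~\ref{barXsupt} and Assumption~\ref{definitionYhi} enter.
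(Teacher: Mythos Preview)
Your proof is correct and follows essentially the same approach as the paper: approximate $\xi$ in $\|\cdot\|_*$ by some $\phi\in C_b(\overline{\Omega})$, use that the error is controlled uniformly over all $\overline{\mathbb{P}}\in\overline{\cR}^*$, apply weak convergence to $\phi$, and conclude by the triangle inequality. Your additional remark on well-definedness of $\mathbb{E}^{\overline{\mathbb{P}}}[\xi]$ via bounded linear extension is a welcome clarification that the paper leaves implicit.
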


\begin{proof}
For every $\varepsilon > 0$, there is $\xi_\varepsilon \in C_b \left(\overline{\Omega}\right)$ such that 
$$
\left\| \xi - \xi_\varepsilon \right\|_* = \sup\limits_{\overline{\mathbb{P}} \in \overline{\cR}^* } \mathbb{E}^{\overline{\mathbb{P}}} \left[ \,\left| \xi - \xi_\varepsilon \right|\, \right] \leq \varepsilon.
$$ 
It follows that
\be\nonumber
\varlimsup_{n\rightarrow \infty} \left| \mathbb{E}^{\overline{ \mathbb{P}}_n} \left[ \xi \right] - \mathbb{E}^{\overline{\mathbb{P}}_\infty} \left[ \xi \right] \right| \leq \sup_{n\geq 1} \mathbb{E}^{\overline{\mathbb{P}}_n} \left[ \left| \xi - \xi_\varepsilon \right| \right] + \varlimsup_{n\rightarrow \infty} \left| \mathbb{E}^{\overline{\mathbb{P}}_n} \left[ \xi_\varepsilon \right] - \mathbb{E}^{\overline{\mathbb{P}}_\infty} \left[ \xi_\varepsilon \right] \right| + \mathbb{E}^{\overline{\mathbb{P}}_\infty} \left[ \left| \xi_\varepsilon - \xi \right| \right] \leq 2\varepsilon.
\ee
This completes the proof by the arbitrariness of $\varepsilon$.
\end{proof}

\begin{lemma}\label{convergencePhi}
Let $\Phi\in C \left(\overline{\Omega}\right)$. If $\sup\limits_{\overline{\mathbb{P}} \in \overline{\cR}^* } \mathbb{E}^{\overline{\mathbb{P}}} \left[ \left| \Phi \right|^2 \right] <\infty$, then $\Phi\in\overline{L}^1_*$. 
\end{lemma}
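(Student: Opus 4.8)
The plan is to realize $\Phi$ as a $\|\cdot\|_*$-limit of bounded continuous functions, using for this purpose its truncations. For each integer $N\ge 1$, set $\Phi_N:=(-N)\vee(\Phi\wedge N)$. Since $\Phi$ is continuous, so is $\Phi_N$, and $|\Phi_N|\le N$; hence $\Phi_N\in C_b(\overline{\Omega})$. Thus it suffices to show that $\|\Phi-\Phi_N\|_*\to 0$ as $N\to\infty$: granting this, $(\Phi_N)_{N\ge 1}$ is a Cauchy sequence in $\big(C_b(\overline{\Omega}),\|\cdot\|_*\big)$, hence converges in the completion $\overline{L}^1_*$, and its limit is identified with $\Phi$, which is exactly the assertion $\Phi\in\overline{L}^1_*$.

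To estimate $\|\Phi-\Phi_N\|_*$, I would start from the elementary pointwise inequality
\[
|\Phi-\Phi_N| \;=\; \big(|\Phi|-N\big)^+ \;\le\; |\Phi|\,\mathbb{I}_{\{|\Phi|\ge N\}} \;\le\; \frac{1}{N}\,|\Phi|^2 ,
\]
where the last bound uses $|\Phi|\le |\Phi|^2/N$ on the set $\{|\Phi|\ge N\}$ and the fact that the left-hand side vanishes on its complement. Consequently, for every $\overline{\mathbb{P}}\in\overline{\cR}^*$,
\[
\mathbb{E}^{\overline{\mathbb{P}}}\big[\,|\Phi-\Phi_N|\,\big]\;\le\;\frac{1}{N}\,\mathbb{E}^{\overline{\mathbb{P}}}\big[\,|\Phi|^2\,\big]\;\le\;\frac{1}{N}\,\sup_{\overline{\mathbb{P}}'\in\overline{\cR}^*}\mathbb{E}^{\overline{\mathbb{P}}'}\big[\,|\Phi|^2\,\big] ,
\]
and taking the supremum over $\overline{\mathbb{P}}$ yields $\|\Phi-\Phi_N\|_*\le N^{-1}\sup_{\overline{\mathbb{P}}\in\overline{\cR}^*}\mathbb{E}^{\overline{\mathbb{P}}}[\,|\Phi|^2\,]$, which tends to $0$ by the standing hypothesis. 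This establishes the required convergence and completes the argument.

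I do not expect a genuine obstacle here: this is a routine truncation-and-completion argument, and the square-integrability hypothesis enters only through the displayed bound. The one point worth spelling out is the meaning of ``$\Phi\in\overline{L}^1_*$'', namely that $\Phi$ is the $\|\cdot\|_*$-limit of a sequence in $C_b(\overline{\Omega})$; the truncations $\Phi_N$ provide precisely such a sequence. In the sequel this lemma will be invoked for the reward functional $\Gamma$ of~\eqref{Gamma}, the required estimate $\sup_{\overline{\mathbb{P}}\in\overline{\cR}^*}\mathbb{E}^{\overline{\mathbb{P}}}[\,|\Gamma|^2\,]<\infty$ then following from the third-moment bound on $\widehat{L}^{h,u^h}$ in Lemma~\ref{barXsupt}(iii), the exponential integrability of $\widehat{X}^{h,u^h}$ in Lemma~\ref{barXsupt}(ii) together with assumption $(\mathbf{A2})$, and H\"older's inequality; but that verification is independent of the present statement.
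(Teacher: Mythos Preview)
Your proof is correct and follows essentially the same truncation approach as the paper: define $\Phi_N=(-N)\vee(\Phi\wedge N)\in C_b(\overline{\Omega})$ and show $\|\Phi-\Phi_N\|_*\to 0$. The only cosmetic difference is that the paper bounds $\mathbb{E}^{\overline{\mathbb{P}}}[|\Phi-\Phi_N|]$ via Cauchy--Schwarz and Chebyshev, whereas your pointwise inequality $|\Phi-\Phi_N|\le|\Phi|^2/N$ gives the same $O(1/N)$ bound more directly.
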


\begin{proof}
Define $\Phi_N:= (-N) \vee (\Phi \wedge N)$. Then $\Phi_N \in C_b \left( \overline{\Omega} \right)$. According to Cauchy-Schwarz inequality, we have
\be\nonumber
\displaystyle \left\| \Phi - \Phi_N \right\|_* &=&\displaystyle \sup_{\overline{\mathbb{P}} \in \overline{\cR}^*} \mathbb{E}^{\overline{\mathbb{P}}} \left[ \left| \Phi - \Phi_N \right| \right] = \sup_{\overline{\mathbb{P}} \in \overline{\cR}^*} \mathbb{E}^{\overline{\mathbb{P}}} \left[ \left| \Phi - \Phi_N \right| \cdot 1_{\{|\Phi|>N\}} \right] \leq \sup_{ \overline{\mathbb{P}} \in \overline{\cR}^* } \left\{ \left( \mathbb{E}^{\overline{\mathbb{P}}} \left[ \left| \Phi - \Phi_N \right|^2 \right] \right)^\frac{1}{2} \left( \overline{\mathbb{P}} \left[ \left| \Phi \right|>N \right] \right)^\frac{1}{2} \right\}\\\nonumber
&\leq& \displaystyle \sup_{ \overline{\mathbb{P}} \in \overline{\cR}^*} \left\{ \left( 4 \mathbb{E}^{\overline{\mathbb{P}}} \left[ \left| \Phi \right|^2 \right] \right)^\frac{1}{2}\right\}  \sup_{ \overline{\mathbb{P}} \in \overline{\cR}^*} \left\{ \left( \frac{\mathbb{E}^{\overline{\mathbb{P}}} \left[ \left| \Phi \right|^2 \right]}{N^2} \right)^\frac{1}{2} \right\} \leq \frac{C}{N}.
\ee
Thus we have $\left\| \Phi - \Phi_N \right\|_* \longrightarrow 0$ as $N$ tends to infinity, and hence $\Phi\in\overline{L}^1_*$.
\end{proof}

Next, we show that the reward functional $\Gamma$ on the enlarged canonical space $\overline{\Omega}$ belongs to $\overline{L}^1_*$.

\begin{lemma} \label{GammainL1*}
The random variable $\Gamma$ defined by~\eqref{Gamma} lies in $\overline{L}^1_*$.
\end{lemma}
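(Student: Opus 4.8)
The plan is to invoke Lemma~\ref{convergencePhi}: since $\Gamma$ is real-valued, it suffices to prove that $\Gamma$ is continuous on $\overline{\Omega}$ and that $\sup_{\overline{\mathbb{P}}\in\overline{\cR}^*}\mathbb{E}^{\overline{\mathbb{P}}}\big[|\Gamma|^2\big]<\infty$.

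For the continuity, take a sequence $(X^n,L^n,Y^n,q^n)\to(X,L,Y,q)$ in $\overline{\Omega}$. Since on the Skorokhod spaces $\mathcal{D}^d,\mathcal{D}^1,\mathcal{D}^k$ the evaluation map at $T$ is continuous, we get $L^n_T\to L_T$, $X^n_T\to X_T$ and hence $G(X^n_T)\to G(X_T)$ by continuity of $G$. For the running reward we split
\begin{equation}\nonumber
\int_0^T\!\!\int_A K(s,X^n_s,a)\,q^n(s,da)\,ds-\int_0^T\!\!\int_A K(s,X_s,a)\,q(s,da)\,ds=I_n+II_n,
\end{equation}
with $I_n:=\int_0^T\!\int_A\big[K(s,X^n_s,a)-K(s,X_s,a)\big]q^n(s,da)\,ds$ and $II_n:=\int_0^T\!\int_A K(s,X_s,a)\,\big(q^n-q\big)(s,da)\,ds$. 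Recall that Skorokhod convergence of $X^n$ to $X$ implies $\sup_n\|X^n\|_\infty<\infty$ and $X^n_s\to X_s$ at every continuity point of $X$, hence for Lebesgue-a.e.\ $s$. Since $K$ is continuous and $A$ is compact, $\sup_{a\in A}|K(s,X^n_s,a)-K(s,X_s,a)|\to0$ for a.e.\ $s$, and by $(\mathbf{A2})$ this is dominated by the constant $2C_2\big(1+e^{C_2\sup_n\|X^n\|_\infty}+e^{C_2\|X\|_\infty}\big)$; as $\int_A q^n(s,da)=1$, dominated convergence gives $I_n\to0$. For $II_n$, the map $(s,a)\mapsto K(s,X_s,a)$ is bounded on $[0,T]\times A$ (again by $(\mathbf{A2})$, using that $X$ is c\`adl\`ag) and its discontinuity set is contained in $\{s:X_{s-}\neq X_s\}\times A$, which is $q$-null; since $q^n\to q$ in $V$, i.e.\ weakly as measures on the Polish space $[0,T]\times A$, the portmanteau theorem for bounded, a.e.-continuous integrands yields $II_n\to0$. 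Together with $L^n_T\to L_T$ this shows $\Gamma(X^n,L^n,q^n)\to\Gamma(X,L,q)$, so $\Gamma\in C(\overline{\Omega})$.

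For the uniform second moment, $(\mathbf{A2})$ gives $|\Gamma|^2\le C\,|L_T|^2\big(1+\exp(C\sup_{0\le s\le T}|X_s|)\big)$, so by H\"older's inequality with exponents $3/2$ and $3$,
\begin{equation}\nonumber
\mathbb{E}^{\overline{\mathbb{P}}}\big[|\Gamma|^2\big]\le C\,\big(\mathbb{E}^{\overline{\mathbb{P}}}\big[|L_T|^3\big]\big)^{2/3}\Big(\mathbb{E}^{\overline{\mathbb{P}}}\big[1+\exp(3C\sup_{0\le s\le T}|X_s|)\big]\Big)^{1/3}.
\end{equation}
For $\overline{\mathbb{P}}=\overline{\mathbb{R}}_{h,u^h}\in\cR_h$ the right-hand side is bounded uniformly in $h$ and $u^h$ by Lemma~\ref{barXsupt}(ii) (with $c=3C$) and (iii). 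For $\overline{\mathbb{P}}\in\cR$, the martingale problem in Definition~\ref{defrf} forces $X$, $L$, $Y$ to be continuous, with $X$ a semimartingale whose drift and diffusion coefficients are bounded by $C_1$, so a standard exponential estimate gives $\mathbb{E}^{\overline{\mathbb{P}}}\big[\exp(3C\sup_s|X_s|)\big]\le C$ uniformly; and the $D^g$-martingale property identifies $L_T=\exp\big(\int_0^T p(s,X_s)^{\mathrm T}dY_s-\tfrac12\int_0^T|p(s,X_s)|^2ds\big)$, which, since $|p|\le C_1$, is $L^r$-bounded for every $r\ge1$ uniformly over $\cR$ by Novikov's condition and Doob's inequality. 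Hence $\sup_{\overline{\mathbb{P}}\in\overline{\cR}^*}\mathbb{E}^{\overline{\mathbb{P}}}[|\Gamma|^2]<\infty$, and Lemma~\ref{convergencePhi} concludes that $\Gamma\in\overline{L}^1_*$.

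I expect the delicate point to be the joint continuity of the running-reward term in $(X,q)$: the Skorokhod convergence of $X^n$ is not uniform while the $q^n$ converge only weakly, so one cannot pass to the limit directly, and the argument relies on the fact that a c\`adl\`ag path has a Lebesgue-null discontinuity set, combined with the exponential growth bound in $(\mathbf{A2})$ to justify the dominated-convergence and portmanteau steps. The uniform moment bounds over $\cR$ are then routine given the bounded-coefficient martingale-problem estimates.
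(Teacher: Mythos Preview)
Your proof is correct and follows essentially the same route as the paper: both invoke Lemma~\ref{convergencePhi} after bounding $|\Gamma|^2$ via $(\mathbf{A2})$ and then controlling the moments of $L_T$ and the exponential moments of $\sup_t|X_t|$ separately over $\bigcup_h\cR_h$ (via Lemma~\ref{barXsupt}) and over $\cR$ (via bounded-coefficient semimartingale estimates). The only cosmetic differences are that you spell out the continuity of $\Gamma$ on $\overline{\Omega}$ explicitly (the paper asserts this without proof in the remark following Definition~\ref{defrf}), and your ``standard exponential estimate'' for $\sup_t|X_t|$ under $\overline{\mathbb{R}}\in\cR$ is made concrete in the paper via the Dambis--Dubins--Schwarz time change.
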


\begin{proof}
Under the assumption ($\mathbf{A2}$), we have
\begin{equation}\label{boundKG}
\left|\int_0^T K(s,X_s,u_s)\,ds\right| + \left| G(X_T)\right| \leq C \left[ 1+ \sup_{0\leq t \leq T} \exp \left( C \left| X_t \right| \right) \right].
\end{equation}
In view of Lemma~\ref{barXsupt}, we have
\begin{equation}\label{supcRh}
\sup_{h>0}\,\, \sup_{\overline{\mathbb{R}}_{h,u^h} \in \cR_h } \mathbb{E}^{\overline{\mathbb{R}}_{h,u^h}} \left[ \left| \Gamma \right|^2 \right] <\infty.
\end{equation}

Next, we show that $\sup\limits_{\overline{\mathbb{R}} \in \cR } \mathbb{E}^{\overline{\mathbb{R}}} \left[ \left| \Gamma \right|^2 \right] <\infty$. Fix $\overline{\mathbb{R}} \in \cR$. In view of~\eqref{eq:dynamicXL}, since the coefficients $\overline{b}$ and $\overline{\Sigma}^\frac{1}{2}$ are uniformly bounded, $X$ is a continuous semi-martingale whose finite variation part and quadratic variation part are both bounded by a constant $M_T$ under $\overline{\mathbb{R}}$. When $d=1$, by Dambis-Dubins-Schwarz's time change theorem, we get
$$
\mathbb{E}^{\overline{\mathbb{R}}} \left[ \sup_{0\leq t\leq T} \exp \left( C | X_t | \right) \right] \leq e^{C M_T} \mathbb{E}^{\overline{\mathbb{R}}} \left[ \sup_{0\leq t\leq M_T} \exp( C | {B}^\#_t |) \right]\leq C,
$$
where ${B}^\#$ is a standard 1-d Brownian motion. When $d>1$, it is enough to remark that for $X= \left(X^1, \cdots, X^d \right)$, we have
\be\nonumber
 \sup_{0\leq t\leq T} \exp\left( C \left| X_t \right|\right) = \exp\left( C\sup_{0\leq t\leq T} \left| X_t \right| \right) \leq C \exp\left\{ C\left( \sup_{0\leq t\leq T} \left| X^1_t \right| + \cdots + \sup_{0\leq t\leq T} \left| X^d_t \right| \right) \right\}.
\ee
Thus we get
\begin{equation}\label{supt[x]}
\sup_{\overline{\mathbb{R}} \in \cR} \mathbb{E}^{\overline{\mathbb{R}}} \left[ \sup_{0\leq t\leq T} \exp\left( c \left| X_t \right|\right) \right] \leq C.
\end{equation}
Since $L$ is an exponential martingale with uniformly bounded coefficient $p$, and according to~\eqref{boundKG} and~\eqref{supt[x]}, we get
\begin{equation}\label{supcRtilde}
\sup_{\overline{\mathbb{R}} \in \cR } \mathbb{E}^{\overline{\mathbb{R}}} \left[ \left| \Gamma \right|^2 \right] <\infty.
\end{equation}

Finally, in view of~\eqref{supcRh} and~\eqref{supcRtilde}, we have $\sup\limits_{\overline{\mathbb{P}} \in \overline{\cR}^* } \mathbb{E}^{\overline{ \mathbb{P}}} \left[ \left| \Gamma \right|^2 \right] <\infty$. By Lemma~\ref{convergencePhi}, we get $\Gamma\in\overline{L}^1_*$.
\end{proof}

\subsection{Approximating martingale property}

Since our numerical schemes for the discrete processes~\eqref{defYhi},~\eqref{defXhuhi} and~\eqref{Lhuhi} are quite general, with our fairly general assumptions on the coefficients, we can only prove the weak convergence results, i.e., the tightness of distribution of the approximating processes on the enlarged canonical space. In this case, the martingale property is expected to be inherited as the time mesh size $h$ tends to $zero$, so that we can justify that the limit distribution is the desired distribution of the optimal state process. In the following lemma, we study the converging martingale property for the approximating signal processes $\widehat{X}^{h,{u^h}}$ and for the approximating Radon-Nikodym derivative $\widehat{L}^{h,{u^h}}$.

\begin{lemma}\label{martingaleproperty}
Suppose that a sequence $\left\{ \overline{\mathbb{R}}_{h,{u^h}} \right\} \subseteq \left\{ \cR_h \right\}_{h>0}$ converges weakly to $\overline{\mathbb{Q}}$ as $h\rightarrow 0$. Then the process $\left\{C_t^{f}(X,q)\right\}_{0\leq t\leq T}$ defined by~\eqref{CtfB[x][z]} is a $\left\{\overline{\mathscr{G}}_T \vee \overline{\mathscr{F}}_t \right\}_{0\leq t\leq T}$ -martingale under $\overline{\mathbb{Q}}$.

\end{lemma}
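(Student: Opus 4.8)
The plan is to verify the martingale property in its integral form, transferring an approximate discrete martingale identity from the level-$h$ systems through the weak convergence $\overline{\mathbb{R}}_{h,u^h}\to\overline{\mathbb{Q}}$. First I would reduce the claim: since $f,f_t,Df,D^2f$ are bounded and $b,\sigma$ are bounded by $(\mathbf{A1})$, the process $C_t^{f}(X,q)$ defined in~\eqref{CtfB[x][z]} is bounded; hence, by a standard monotone-class argument, it suffices to show that for every $0\le s\le t\le T$ and every $\Phi$ of the form
$$
\Phi = \phi\big(X_{s_1},L_{s_1},\dots,X_{s_p},L_{s_p}\big)\cdot\varphi(Y)\cdot\psi\Big(\int_0^T\!\!\int_A g(r,a)\,q(dr,da)\Big),\qquad 0\le s_1\le\cdots\le s_p\le s,
$$
with $\phi$, $\psi$, $g$ bounded continuous and $\varphi\in C_b(\mathcal{D}^k)$ depending on finitely many coordinates, one has $\mathbb{E}^{\overline{\mathbb{Q}}}\big[(C_t^{f}-C_s^{f})\,\Phi\big]=0$; these $\Phi$ form a multiplicative class generating $\overline{\mathscr{G}}_T\vee\overline{\mathscr{F}}_s$ and are bounded and continuous off an $\overline{\mathbb{Q}}$-null set (see the last step).

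Next, for fixed $h$ and $u^h$, I would establish the discrete counterpart. Pulled back through the embedding $(\widehat{X}^{h,u^h},\widehat{L}^{h,u^h},\widehat{Y}^h,q^{h,u^h})$, such a $\Phi$ becomes $\sigma(\eta_1^h,\dots,\eta_n^h,U_1^h,\dots,U_j^h)$-measurable, where $j=\lfloor s/h\rfloor$: the $X$- and $L$-arguments are frozen at times $\le s$, while the $Y$- and $q$-functionals depend only on the increments $\{\eta_i^h\}$. Telescoping $C_t^{f}-C_s^{f}$ along the grid (which differs from the grid sum by $O(h)$ because piecewise-constant paths are used), pulling $\Phi$ inside the conditional expectation, and using that conditionally on $\sigma(\eta_1^h,\dots,\eta_n^h,U_1^h,\dots,U_i^h)$ the increment $X_{i+1}^{h,u^h}-X_i^{h,u^h}$ is distributed as $H_h(t_i,x,a,U)|_{x=X_i^{h,u^h},\,a=u_i^h}$ with $U$ uniform and independent --- this is where the independence of $\{\eta_i^h\}$ and $\{U_i^h\}$ is used, and it is exactly what makes the enlargement by the whole future of $(Y^h,q^h)$ harmless --- a second-order Taylor expansion of $f$ together with~\eqref{propertyxihuh} gives, uniformly in $\omega$ and $i$,
$$
\Big|\mathbb{E}^{\mathbb{Q}^h}\!\big[\,f(t_{i+1},X_{i+1}^{h,u^h})-f(t_i,X_i^{h,u^h})-\cL^{t_i,X_i^{h,u^h},u_i^h}f(t_i,X_i^{h,u^h})\,h\ \big|\ \sigma(\eta_1^h,\dots,\eta_n^h,U_1^h,\dots,U_i^h)\big]\Big|\le C\,h^{3/2},
$$
the second-moment identity~\eqref{xih2} producing the $\tfrac12\mathrm{Tr}[\sigma\sigma^{\mathrm T}D^2f]$-term plus an $O(h^2)$ error, and the third-moment bound~\eqref{xih3} controlling the Taylor remainder. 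Summing over the at most $n=T/h$ intervals and using $|\Phi|\le\|\Phi\|_\infty$ then yields $\big|\mathbb{E}^{\overline{\mathbb{R}}_{h,u^h}}\big[(C_t^{f}-C_s^{f})\,\Phi\big]\big|\le C\,\|\Phi\|_\infty\,\sqrt h\to 0$ as $h\to 0$.

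Finally I would pass to the limit. By Lemma~\ref{barXsupt}(i) and the tightness in Lemma~\ref{precompactRhLemma}, the limit $\overline{\mathbb{Q}}$ is carried by paths that are continuous in the $X$- (and $L$-, $Y$-) components; since $q(dr,A)=dr$ has no atoms, $q\mapsto\int_0^t\!\int_A g(r,a)q(dr,da)$ is weakly continuous, and since Skorokhod convergence to a continuous limit is uniform while $b,\sigma$ are Lipschitz in $x$ and $Df,D^2f$ are uniformly continuous, the map $(X,L,Y,q)\mapsto(C_t^{f}(X,q)-C_s^{f}(X,q))\Phi$ is bounded and $\overline{\mathbb{Q}}$-a.s.\ continuous on $\overline{\Omega}$. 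The portmanteau theorem then gives $\mathbb{E}^{\overline{\mathbb{R}}_{h,u^h}}[(C_t^{f}-C_s^{f})\Phi]\to\mathbb{E}^{\overline{\mathbb{Q}}}[(C_t^{f}-C_s^{f})\Phi]$, and combined with the previous estimate we conclude $\mathbb{E}^{\overline{\mathbb{Q}}}[(C_t^{f}-C_s^{f})\Phi]=0$, hence the lemma. I expect the main obstacle to be the bookkeeping of the discrete near-martingale identity --- in particular, handling the conditioning on the entire future of $(Y^h,q^h)$ so that the $\overline{\mathscr{G}}_T$-enlargement is legitimate, while keeping the per-step error at $O(h^{3/2})$ so that its accumulation over $n\sim 1/h$ steps still vanishes; by contrast the Skorokhod-space continuity issues are mild once we know the limiting $X$ is continuous.
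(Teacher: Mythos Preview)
Your proposal is correct and follows essentially the same route as the paper: both reduce the martingale property to showing that $\mathbb{E}^{\overline{\mathbb{R}}_{h,u^h}}\big[\Phi\cdot(C_t^f-C_s^f)\big]\to 0$ for finite-dimensional bounded continuous test functions $\Phi$, both exploit the independence of $U^h_{i+1}$ from $\sigma(\eta^h_1,\dots,\eta^h_n,U^h_1,\dots,U^h_i)=\mathscr{F}^h_i\vee\sigma(Y^h)$ to justify the enlargement by $\overline{\mathscr{G}}_T$, and both obtain an $O(h^{3/2})$ per-step conditional error via Taylor expansion and~\eqref{propertyxihuh}, summing to a quantity that vanishes with $h$ (the paper tracks the modulus $\rho(h)$ in addition to $h^{1/2}$, which you should too when replacing $\cL^{t_i,\cdot}f\cdot h$ by the time integral). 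Your final paragraph on the $\overline{\mathbb{Q}}$-a.s.\ continuity needed for the portmanteau step is in fact more explicit than the paper's own argument at this point.
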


\begin{proof}

Since the random variable $U^h_{i+1}$ is independent with the $\sigma$-algebra $\sigma \left( {Y}^h \right):=\sigma\left(Y^h_i:i=0,1,\cdots,n\right)$, the integral properties for $H_h \left( t_i , X_i^{h,{u^h}}, u^h_i, U^h_{i+1} \right)$ in~\eqref{propertyxihuh} hold still for $\mathbb{E}^Y_i\left[ \,\cdot\, \right]: = \mathbb{E}^{\mathbb{Q}^h} \left[ \,\,\cdot\,\, \big|\mathscr{F}_{i}^h \vee \sigma\left({Y}^h\right)\right]$.

Note that $\widehat{X}^{h,{u^h}}_{t_{i+1}} - \widehat{X}^{h,{u^h}}_{t_i} = H_h \left( t_i, X_i^{h,{u^h}}, u^h_i, U^h_{i+1} \right)$. By Taylor expansion, we know that for each $f\in C_b^{\infty}([0,T]\times \mathbb{R}^d)$,
\begin{equation}\label{boundedpf}
\mathbb{E}^Y_i\left[f\left( t_{i+1}, \widehat{X}^{h,{u^h}}_{t_{i+1}} \right) - f\left( t_i, \widehat{X}^{h,{u^h}}_{t_i} \right)- \cL^{t_i,\widehat{X}^{h,{u^h}}_{t_i},\,u^h_{t_i}}f\left( t_i, \widehat{X}^{h,{u^h}}_{t_i} \right) h\right] = \varepsilon_i^h,
\end{equation}
where $| \varepsilon_i^h | \leq C h^\frac{3}{2} $. Thus, we have
$$
\mathbb{E}^Y_i\left[f \left( t_{i+1}, \widehat{X}^{h,{u^h}}_{t_{i+1}} \right) - f \left( t_i, \widehat{X}^{h,{u^h}}_{t_i} \right) - \int_{t_i}^{t_{i+1}} \cL^{r,\widehat{X}^{h,{u^h}}_r,\,u^h_r}f(r,\widehat{X}^{h,{u^h}}_r)\,dr\right] = \theta^h_i,
$$
where 
$$
\theta^h_i := \mathbb{E}^Y_i \left[\int_{t_i}^{t_{i+1}} \left\{ \cL^{t_i,\widehat{X}^{h,{u^h}}_{t_i},u^h_{t_i}}f \left( t_i ,\widehat{X}^{h,{u^h}}_{t_i} \right) - \cL^{r,\widehat{X}^{h,{u^h}}_r,\,u^h_r}f\left(r,\widehat{X}^{h,{u^h}}_r\right) \right\} dr\right] + \varepsilon^h_i.
$$
By~\eqref{hatq}, we get 
\begin{equation}\nonumber
\mathbb{E}^{\mathbb{Q}^h} \left[ \left| \theta^h_i \right| \right] \leq Ch \left[ \rho(h) + h^\frac{1}{2} \right].
\end{equation} 
It holds that
\be\nonumber
&&\displaystyle \left| \mathbb{E}^Y_s \left[f \left(t,\widehat{X}^{h,{u^h}}_{t} \right)-f \left(s,\widehat{X}^{h,{u^h}}_{s}\right)-\int_{s}^{t} \cL^{r,\widehat{X}^{h,{u^h}}_r,\,u^h_r}f \left(r,\widehat{X}^{h,{u^h}}_r\right)dr\right] \right| \\\nonumber
&\leq&\displaystyle \left| \mathbb{E}^Y_s\left[f \left(t,\widehat{X}^{h,{u^h}}_{t}\right)-f \left(n^h_th,\widehat{X}^{h,{u^h}}_{n^h_th}\right)-\int_{n^h_th}^{t} \cL^{r,\widehat{X}^{h,{u^h}}_r,\,u^h_r}f \left(r,\widehat{X}^{h,{u^h}}_r\right) dr \right] \right|
\\\nonumber
&&\displaystyle +\mathbb{E}^Y_s \Bigg\{  \sum_{i=n^h_s+1}^{n^h_t-1} \bigg| \mathbb{E}^Y_i  \bigg[f \left( t_{i+1},\widehat{X}^{h,{u^h}}_{t_{i+1}}\right) - f \left( t_i,\widehat{X}^{h,{u^h}}_{t_i}\right) -\int_{t_i}^{t_{i+1}} \cL^{r,\widehat{X}^{h,{u^h}}_r,\,u^h_r}f \left(r,\widehat{X}^{h,{u^h}}_r \right)dr\bigg] \bigg| \Bigg\} \\\nonumber
&&\displaystyle + \bigg| \mathbb{E}^Y_s\bigg[f \left((n^h_s+1)h,\widehat{X}^{h,{u^h}}_{(n^h_s+1)h}\right) - f \left(s,\widehat{X}^{h,{u^h}}_{s}\right) - \int_{s}^{(n^h_s+1)h} \cL^{r,\widehat{X}^{h,{u^h}}_r,\,u^h_r} f\left(r,\widehat{X}^{h,{u^h}}_r\right) dr\bigg] \bigg| \\\label{martingaleconvergence}
\quad\quad&\leq&\displaystyle \Delta_h,
\ee
where $n_s^h := t_i$ for $s\in[t_i,t_{i+1})$, $\mathbb{E}^Y_s\left[ \,\cdot\, \right]: = \mathbb{E}^Y_{n^h_s+1}\left[ \,\cdot\, \right]$ and $\mathbb{E}^{\mathbb{Q}^h} \left[ \left| \Delta_h \right| \right] \leq C \left[ \rho(h) + h^\frac{1}{2} \right]$.

To prove that the process $C_t^{f}(X,q)$ is a $\overline{\mathscr{G}}_T \vee \overline{\mathscr{F}}_t$ - martingale under $\overline{\mathbb{Q}}$, it is enough to show that for any $f\in C_b^\infty([0,T]\times\mathbb{R}^d$),
\be\label{hmartingale}
 \mathbb{E}^{\overline{\mathbb{R}}_{h,{u^h}}} \bigg[ \Phi\left( X_{s_i}, L_{s_i}, Y_{r_j}, q_{r_j}(\phi_m); \,\,i\leq I, j\leq J, m\leq M \right) \times \left\{ C_t^{f}\left(X,q\right) - C_s^{f}\left(X,q\right) \right\} \bigg]
\ee
tends to $zero$ as $h\rightarrow 0$, for arbitrary $I,J,M\in\mathbb{N}_+$, $0\leq s_i \leq s <t \leq T$, $0\leq r_j \leq T$, $\phi_m\in C_b([0,T]\times a)$, $\Phi \in C_b\left( \mathbb{R}^{d\times I} \times \mathbb{R}^{I} \times \mathbb{R}^{k\times J} \times \mathbb{R}^{M \times J} \right)$. Since $\overline{\mathbb{R}}_{h,{u^h}} = \mathbb{Q}^h \circ (\widehat{X}^{h,{u^h}}, \widehat{L}^{h,{u^h}}, {\widehat{Y}^h}, q^{h,{u^h}})^{-1}$, we know that~\eqref{hmartingale} is equal to
\be\nonumber
&&\displaystyle \mathbb{E}^{\mathbb{Q}^h} \Bigg[ \Phi\left( \widehat{X}^{h,{u^h}}_{s_i}  \,,\, \widehat{L}^{h,{u^h}}_{s_i}  \,,\, \widehat{Y}^h_{r_j} \,,\, \int_0^{r_j} \phi_m \left( r, u^h_r\right) dr \,;\,\, i\leq I\,,\, j\leq J\,,\, m\leq M \right) \\\nonumber
&&\displaystyle \quad\quad\quad\quad \times \left\{ f\left(t,\widehat{X}^{h,{u^h}}_{t}\right)-f \left(s,\widehat{X}^{h,{u^h}}_{s}\right)-\int_{s}^{t} \cL^{r,\widehat{X}^{h,{u^h}}_r,\,u^h_r}f \left(r,\widehat{X}^{h,{u^h}}_r \right)dr \right\} \Bigg] .
\ee
Since $s_i \leq s \leq \left(n^h_s +1 \right)h$ for $i=1, \cdots,I$, the above term equals to
\be\nonumber
&&\displaystyle \mathbb{E}^{\mathbb{Q}^h} \Bigg[ \Phi\left( \widehat{X}^{h,{u^h}}_{s_i} \,,\, \widehat{L}^{h,{u^h}}_{s_i} \,,\, \widehat{Y}^h_{r_j} \,,\, \int_0^{r_j} \phi_m \left( r, u^h_r\right) dr \,;\,\, i\leq I \,,\, j\leq J \,,\, m\leq M\right) \\\nonumber
&&\displaystyle \quad\quad\quad \times \mathbb{E}^Y_s\left\{ f\left(t,\widehat{X}^{h,{u^h}}_{t}\right)-f \left(s,\widehat{X}^{h,{u^h}}_{s}\right) - \int_{s}^{t} \cL^{r,\widehat{X}^{h,{u^h}}_r,u^h_r}f \left(r,\widehat{X}^{h,{u^h}}_r \right)dr \right\} \Bigg],
\ee
which is bounded by $C_\Phi \mathbb{E}^{\mathbb{Q}^h} \left[ \left| \Delta_h \right| \right] \leq C [ \rho(h) + h^\frac{1}{2} ]$ according to~\eqref{martingaleconvergence}. This completes the proof.
\end{proof}

\begin{lemma}\label{martingalepropertyLY}
Suppose that a sequence $\left\{ \overline{\mathbb{R}}_{h,{u^h}} \right\} \subseteq \left\{ \cR_h \right\}_{h>0}$ converges weakly to $\overline{\mathbb{Q}}$ as $h\rightarrow 0$. Then the process $\left\{ D_t^{g}(X,L,Y) \right\}_{0\leq t\leq T}$ defined by~\eqref{Dtg} is a $\left\{\overline{\mathscr{F}}_t \right\}_{0\leq t\leq T}$ -martingale under $\overline{\mathbb{Q}}$.

\end{lemma}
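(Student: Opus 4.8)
The plan is to follow the proof of Lemma~\ref{martingaleproperty} almost verbatim, with the Fokker--Planck operator $\cL$ replaced by $\cM$ (defined in~\eqref{cML}) and the signal increments replaced by those of the discrete Radon--Nikodym pair $(\widehat L^{h,u^h},\widehat Y^h)$. A preliminary simplification is that $D_t^g$ is only required to be a martingale for the \emph{un}-enlarged filtration $\{\overline{\mathscr{F}}_t\}$, so there is no need for the conditioning device $\mathbb{E}^Y_i[\cdot]=\mathbb{E}^{\mathbb{Q}^h}[\cdot\,|\,\mathscr{F}^h_i\vee\sigma(Y^h)]$ used in Lemma~\ref{martingaleproperty}: everywhere I would simply condition on the natural discrete filtration, writing $\mathbb{E}_i[\cdot]:=\mathbb{E}^{\mathbb{Q}^h}[\cdot\,|\,\mathscr{F}^h_i]$, with respect to which $\eta^h_{i+1}$ and $U^h_{i+1}$ are independent of the past.

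First I would carry out the one-step Taylor expansion. By~\eqref{defYhi} and~\eqref{Lhuhi}, the increments over $[t_i,t_{i+1})$ are $\widehat Y^h_{t_{i+1}}-\widehat Y^h_{t_i}=\eta^h_{i+1}$ and $\widehat L^{h,u^h}_{t_{i+1}}-\widehat L^{h,u^h}_{t_i}=L^{h,u^h}_i\,p(t_i,X^{h,u^h}_i)\,\eta^h_{i+1}$; both are linear in the \emph{same} increment $\eta^h_{i+1}$, which is exactly what will produce the off-diagonal blocks $p^{\mathrm T}l$ and $pl$ of $\cM$. Expanding $g\in C_b^\infty([0,T]\times\mathbb{R}\times\mathbb{R}^k)$ to second order around $(t_i,L^{h,u^h}_i,Y^h_i)$ and taking $\mathbb{E}_i[\cdot]$: the first-order terms vanish since $\mathbb{E}_i[\eta^h_{i+1}]=0$; the second-order terms, using $\mathrm{Var}_i[\eta^h_{i+1}]=hI_k$ from~\eqref{propertyetahuh}, reproduce precisely $\cM^{t_i,X^{h,u^h}_i,L^{h,u^h}_i}g(t_i,L^{h,u^h}_i,Y^h_i)\,h$ (the $(\Delta L)^2$-term giving the $pp^{\mathrm T}l^2$ block, the $\Delta L\,\Delta Y$-term the $p^{\mathrm T}l$ block, the $\Delta Y\,\Delta Y^{\mathrm T}$-term the $I_k$ block); and the third-order remainder is bounded by $C\big(h^3+|\eta^h_{i+1}|^3+|L^{h,u^h}_i|^3|\eta^h_{i+1}|^3\big)$ since $p$ is bounded (A1) and $g$ has bounded derivatives. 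Taking $\mathbb{E}^{\mathbb{Q}^h}$ and using $\mathbb{E}_i[|\eta^h_{i+1}|^3]\le Ch^{3/2}$ together with the uniform bound $\sup_i\mathbb{E}^{\mathbb{Q}^h}[|L^{h,u^h}_i|^3]\le C$ of Lemma~\ref{barXsupt}(iii), this remainder has expectation $O(h^{3/2})$ per step; replacing $\cM^{t_i,\cdot}g(t_i,\cdot)\,h$ by $\int_{t_i}^{t_{i+1}}\cM^{r,\cdot}g(r,\cdot)\,dr$ costs a further $O(h\rho(h)+h^{3/2})$ per step, by the uniform continuity in (A1) and the increment bound~\eqref{hatq}.

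Next I would sum these one-step estimates over the blocks $[s,(n^h_s+1)h]$, $\{[t_i,t_{i+1})\}$ and $[n^h_t h,t]$, exactly as in~\eqref{martingaleconvergence}, to get $\big|\mathbb{E}_{n^h_s+1}[D_t^g(\widehat X^{h,u^h},\widehat L^{h,u^h},\widehat Y^h)-D_s^g(\widehat X^{h,u^h},\widehat L^{h,u^h},\widehat Y^h)]\big|\le\widetilde\Delta_h$ with $\mathbb{E}^{\mathbb{Q}^h}[|\widetilde\Delta_h|]\le C[\rho(h)+h^{1/2}]\to0$ (there are $O(1/h)$ blocks). To obtain the martingale property under $\overline{\mathbb{Q}}$, I would test against an arbitrary bounded continuous functional $\Phi$ of $(X_{s_i},L_{s_i},Y_{r_j},q_{r_j}(\phi_m))$ with $s_i,r_j\le s$, which under $\overline{\mathbb{R}}_{h,u^h}$ is $\mathscr{F}^h_{n^h_s+1}$-measurable, so that $\big|\mathbb{E}^{\overline{\mathbb{R}}_{h,u^h}}[\Phi\,(D_t^g-D_s^g)]\big|\le C_\Phi\,\mathbb{E}^{\mathbb{Q}^h}[|\widetilde\Delta_h|]\to0$. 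It then remains to pass to the weak limit $\overline{\mathbb{R}}_{h,u^h}\to\overline{\mathbb{Q}}$; the single difference from Lemma~\ref{martingaleproperty} is that $D_t^g$ is \emph{not} bounded, since $\cM g$ carries the factor $L^2$, so I would invoke the $\overline{L}^1_*$ machinery: $\Phi\,(D_t^g-D_s^g)$ is continuous on $\overline\Omega$ ($\overline{\mathbb{Q}}$ charges only continuous paths, being the limit of processes with $O(h^{1/2})$ jumps) and has uniformly bounded second moment over $\overline{\cR}^*$ (by Lemma~\ref{barXsupt}(iii), or its higher-moment analogue obtained by the same discrete BDG/Gronwall argument, on $\bigcup_h\cR_h$, and by $L$ being an exponential martingale with bounded $p$ on $\cR$), hence lies in $\overline{L}^1_*$ by Lemma~\ref{convergencePhi}; Lemma~\ref{L1*convergence} then gives $\mathbb{E}^{\overline{\mathbb{Q}}}[\Phi\,(D_t^g-D_s^g)]=\lim_h\mathbb{E}^{\overline{\mathbb{R}}_{h,u^h}}[\Phi\,(D_t^g-D_s^g)]=0$, and by arbitrariness of $\Phi$ the process $D^g$ is an $\{\overline{\mathscr{F}}_t\}$-martingale under $\overline{\mathbb{Q}}$.

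The main obstacle — indeed the only point needing care beyond transcribing Lemma~\ref{martingaleproperty} — is the unbounded Radon--Nikodym factor $L^{h,u^h}_i$, which appears both in the cubic Taylor remainder (handled by the uniform third-moment estimate of Lemma~\ref{barXsupt}(iii)) and inside $\cM g$ itself, so that $D_t^g$ is unbounded and the weak limit cannot be taken naively; this is the reason one must route the limit passage through the uniform-integrability/$\overline{L}^1_*$ argument already developed for $\Gamma$ rather than directly through weak convergence of bounded continuous functionals.
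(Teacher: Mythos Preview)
Your proposal is correct and essentially matches the paper's proof: Taylor-expand $g$ to extract $\cM g\,h$ with a cubic remainder controlled via the third-moment bound on $L^{h,u^h}$ from Lemma~\ref{barXsupt}(iii), sum over blocks, and then pass to the weak limit through the $\overline{L}^1_*$ machinery (Lemmas~\ref{L1*convergence}--\ref{convergencePhi}) precisely because $D_t^g$ carries the unbounded factor $L^2$. The paper is slightly more careful in one phrasing: the test functional $\Phi\,(D_t^g-D_s^g)$ is not continuous on all of $\overline{\Omega}$ but only $\overline{\mathbb{Q}}$-a.s.\ continuous (the paper restricts $s,t,s_i$ to the complement of a countable set $\mathbb{T}_0$), though your parenthetical about $\overline{\mathbb{Q}}$ charging only continuous paths is exactly the reason this suffices.
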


\begin{proof}

By Taylor expansion, we know that for each $g\in C^\infty_b([0,T]\times \mathbb{R} \times \mathbb{R}^k)$,
\be\label{L2pf}
\mathbb{E}_i\bigg[g\left(t_{i+1}, \widehat{L}^{h,{u^h}}_{t_{i+1}}, \widehat{Y}^{h}_{t_{i+1}} \right) - g\left(t_i, \widehat{L}^{h,{u^h}}_{t_i}, \widehat{Y}^{h}_{t_i} \right) - \cM^{t_i,\widehat{X}^{h,{u^h}}_{t_i},\widehat{L}^{h,{u^h}}_{t_i} } g\left(t_i, \widehat{L}^{h,{u^h}}_{t_i}, \widehat{Y}^{h}_{t_i} \right) h\bigg] = \hat{\varepsilon}_i^h,
\ee
where $\mathbb{E}_i\left[\,\cdot\,\right] = \mathbb{E}^{\mathbb{Q}^h} \left[\,\cdot\,\big| \mathscr{F}_{i}^h \right]$ and $\left| \hat{\varepsilon}_i^h \right| \leq C \left( 1 + \left| \widehat{L}^{h,{u^h}}_{t_i} \right|^3 \right)  h^\frac{3}{2} $. Compared with~\eqref{boundedpf}, the additional term $C \left( 1 + \left| \widehat{L}^{h,{u^h}}_{t_i} \right|^3 \right)$ in $\hat{\varepsilon}_i^h$ comes from the unbounded increment of the approximating Radon-Nikodym derivative process 
$$
\widehat{L}^{h,{u^h}}_{t_{i+1}} - \widehat{L}^{h,{u^h}}_{t_i} = p \left( t_i, X_i^{h,{u^h}} \right) \widehat{L}^{h,{u^h}}_{t_i} \eta^h_{i+1}.
$$ 
However, the uniform integrability of $\widehat{L}^{h,{u^h}}$ in Lemma~\ref{barXsupt} can help us to deal with this issue, which gives that $\mathbb{E}^{\mathbb{Q}^h} \left[ \left| \hat{\varepsilon}_i^h \right| \right] \leq C h^\frac{3}{2}$. Then we have
\be\nonumber
\mathbb{E}_i\bigg[g\left(t_{i+1}, \widehat{L}^{h,{u^h}}_{t_{i+1}}, \widehat{Y}^{h}_{t_{i+1}} \right) - g\left(t_i, \widehat{L}^{h,{u^h}}_{t_i}, \widehat{Y}^{h}_{t_i} \right) - \int_{t_i}^{t_{i+1}} \cM^{r,\widehat{X}^{h,{u^h}}_{r},\widehat{L}^{h,{u^h}}_{r} } g\left(r, \widehat{L}^{h,{u^h}}_{r}, \widehat{Y}^{h}_{r} \right) dr\bigg] = \hat{\theta}_i^h,
\ee
where $\mathbb{E}^{\mathbb{Q}^h} \left[ \left| \hat{\theta}_i^h \right| \right] \leq Ch [ \rho(h) + h^\frac{1}{2} ]$ according to~\eqref{hatq} and Lemma~\ref{barXsupt}. It follows that
\be\label{martingaleconvergenceLY}
\bigg| \mathbb{E}_s \bigg[g\left(t, \widehat{L}^{h,{u^h}}_{t}, \widehat{Y}^{h}_{t} \right) - g\left(s, \widehat{L}^{h,{u^h}}_{s}, \widehat{Y}^{h}_{s} \right) - \int_{s}^{t} \cM^{r,\widehat{X}^{h,{u^h}}_{r},\widehat{L}^{h,{u^h}}_{r} } g\left(r, \widehat{L}^{h,{u^h}}_{r}, \widehat{Y}^{h}_{r} \right) dr \bigg] \bigg| \leq \hat{\Delta}_h,
\ee
in which $\mathbb{E}_s\left[ \,\cdot\, \right]: = \mathbb{E}_{n^h_s+1}\left[ \,\cdot\, \right]$ and $\mathbb{E}^{\mathbb{Q}^h} \left[ \left| \hat{\Delta}_h \right| \right] \leq C [ \rho(h) + h^\frac{1}{2} ]$.

	Notice that there exists a countable set $\mathbb{T}_0 \subset (0,T)$, such that, for arbitrary $I,M\in\mathbb{N}_+$,  $\phi_m\in C_b([0,T]\times a)$, $\hat{\Phi} \in C_b\left( \mathbb{R}^{d\times I} \times \mathbb{R}^{I} \times \mathbb{R}^{k\times I} \times \mathbb{R}^{M \times I} \right)$, and $0\leq s_i \leq s <t \leq T$ with $s, t \in [0,T] \setminus \mathbb{T}_0$, the functional $\Theta(X,L,Y,q) : \overline{\Omega} \longrightarrow \R$ defined as follows is $\overline{\mathbb{Q}}$-a.s. continuous,
\be\nonumber
\Theta\left( X,L,Y,q \right):=\hat{\Phi} \left( X_{s_i}, L_{s_i}, Y_{s_i}, q_{s_i}(\phi_m); i\leq I, m\leq M \right) \left[ D_t^{g}(X,L,Y) - D_s^{g}(X,L,Y) \right].
\ee
One can check that $\left| \Theta \right| \leq C\left( 1 + \sup\limits_{0\leq r \leq T} \left| L_r \right|^2 \right) $. In view of Lemma~\ref{barXsupt}, Lemma~\ref{L1*convergence} and Lemma~\ref{convergencePhi}, we know that $\mathbb{E}^{\overline{\mathbb{R}}^{h,u^h}} \left[ \Theta \right] \longrightarrow \mathbb{E}^{\overline{\mathbb{Q}}} \left[ \Theta \right]$. Thus, to prove that the process $D_t^{g}(X,L,Y)$ is a $\overline{\mathscr{F}}_t$ - martingale under $\overline{\mathbb{Q}}$, it is enough to show that $\mathbb{E}^{\overline{\mathbb{R}}^{h,u^h}} \left[ \Theta \right] \longrightarrow 0$. Since $\overline{\mathbb{R}}_{h,{u^h}} = \mathbb{Q}^h \circ \left(\widehat{X}^{h,{u^h}}, \widehat{L}^{h,{u^h}}, {\widehat{Y}^h}, q^{h,{u^h}} \right)^{-1}$ and $s_i \leq s \leq \left(n^h_s +1 \right)h$ for $i=1,\cdots,I$, we get
\be\nonumber
\mathbb{E}^{\overline{\mathbb{R}}^{h,u^h}} \left[ \Theta \right] &=&\displaystyle \mathbb{E}^{\mathbb{Q}^h} \Bigg[ \hat{\Phi} \left( \widehat{X}^{h,{u^h}}_{s_i}  \,,\, \widehat{L}^{h,{u^h}}_{s_i}  \,,\, \widehat{Y}^h_{s_i} \,,\, \int_0^{s_i} \phi_m \left( r, u^h_r\right) dr; \,\,i\leq I \,,\, m\leq M \right)\\\nonumber 
&&\displaystyle \times \left\{ g\left(t, \widehat{L}^{h,{u^h}}_{t}, \widehat{Y}^{h}_{t} \right) - g\left(s, \widehat{L}^{h,{u^h}}_{s}, \widehat{Y}^{h}_{s} \right)- \int_{s}^{t} \cM^{r,\widehat{X}^{h,{u^h}}_{r},\widehat{L}^{h,{u^h}}_{r} } g\left(r, \widehat{L}^{h,{u^h}}_{r}, \widehat{Y}^{h}_{r} \right) dr \right\} \Bigg] \\\nonumber
&=&\displaystyle \mathbb{E}^{\mathbb{Q}^h} \Bigg[ \hat{\Phi} \left( \widehat{X}^{h,{u^h}}_{s_i}  \,,\, \widehat{L}^{h,{u^h}}_{s_i}  \,,\, \widehat{Y}^h_{s_i} \,,\, \int_0^{s_i} \phi_m \left( r, u^h_r\right) dr; \,\,i\leq I \,,\, m\leq M \right)\\\nonumber 
&&\displaystyle \times \mathbb{E}_s  \left[ g\left(t, \widehat{L}^{h,{u^h}}_{t}, \widehat{Y}^{h}_{t} \right) - g\left(s, \widehat{L}^{h,{u^h}}_{s}, \widehat{Y}^{h}_{s} \right)- \int_{s}^{t} \cM^{r,\widehat{X}^{h,{u^h}}_{r},\widehat{L}^{h,{u^h}}_{r} } g\left(r, \widehat{L}^{h,{u^h}}_{r}, \widehat{Y}^{h}_{r} \right) dr \right] \Bigg],
\ee
which is bounded by $C_{\hat{\Phi}} \,\mathbb{E}^{\mathbb{Q}^h} \left[ \left| \hat{\Delta}_h \right| \right] \leq C [\rho(h) + h^\frac{1}{2}]$ according to~\eqref{martingaleconvergenceLY}. This completes the proof.
\end{proof}

\subsection{Proof of Theorem~\ref{ConvergenceTheorem}}\label{proofmainresultsub}

	We finally provide the proof of our main result.
	Let us first prove some important Lemmas.

\begin{lemma}\label{Rhtight}
Let $\{ \overline{\mathbb{R}}_{h_m,{u^{h_m}}} \}_{m\in\mathbb{N}_+} \subseteq \{ \cR_h \}_{h>0}$ be a sequence of approximating control rules and $\lim_{m\rightarrow \infty} h_m =0$. Then $\{ \overline{\mathbb{R}}_{h_m,{u^{h_m}}} \}_{m\in\mathbb{N}_+}$ is tight, and any cluster point belongs to the set of relaxed control rules $\cR$.
\end{lemma}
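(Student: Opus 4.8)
The plan is to read this off directly from the tightness result of Lemma~\ref{precompactRhLemma} together with the approximating-martingale lemmas, Lemma~\ref{martingaleproperty} and Lemma~\ref{martingalepropertyLY}, so that the statement becomes essentially a bookkeeping exercise. First, since $\{\overline{\mathbb{R}}_{h,u^h} : u^h \in \cU_h\}_{h>0}$ is tight on $\overline{\Omega}$ by Lemma~\ref{precompactRhLemma}, the subfamily $\{\overline{\mathbb{R}}_{h_m,u^{h_m}}\}_{m\ge 1}$ is tight as well, hence by Prokhorov's theorem relatively compact for the weak topology. I would then fix an arbitrary cluster point $\overline{\mathbb{Q}}$, i.e. a weak limit $\overline{\mathbb{R}}_{h_{m_j},u^{h_{m_j}}} \to \overline{\mathbb{Q}}$ along a subsequence with $h_{m_j}\to 0$, and verify that $\overline{\mathbb{Q}}$ satisfies the three defining properties of a relaxed control rule in Definition~\ref{defrf}.

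Next I would dispatch the initial condition: the map $(X,L,Y,q)\mapsto X_0$ is continuous on $\overline{\Omega}$ (evaluation at $t=0$ is continuous on the Skorokhod space $\cD^d$), and $\widehat{X}^{h,u^h}_0 = x_0$ under each $\overline{\mathbb{R}}_{h,u^h}$, so passing to the weak limit yields $X_0 = x_0$, $\overline{\mathbb{Q}}$-a.s. Membership of the $q$-coordinate in $V$ needs no separate argument, since $\overline{\mathbb{Q}}$ is by construction a measure on $\overline{\Omega}$ whose last factor is $V$ (and $V$ is weakly closed, so the $V$-marginals of the whole family are automatically supported on $V$).

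For the two martingale properties I would simply invoke Lemma~\ref{martingaleproperty} and Lemma~\ref{martingalepropertyLY} with the weakly convergent subsequence $\{\overline{\mathbb{R}}_{h_{m_j},u^{h_{m_j}}}\}_j$: for every $f\in C_b^\infty([0,T]\times\mathbb{R}^d)$ the process $C_t^f(X,q)$ of~\eqref{CtfB[x][z]} is a $\{\overline{\mathscr{G}}_T\vee\overline{\mathscr{F}}_t\}$-martingale under $\overline{\mathbb{Q}}$, and for every $g\in C_b^\infty([0,T]\times\mathbb{R}\times\mathbb{R}^k)$ the process $D_t^g(X,L,Y)$ of~\eqref{Dtg} is a $\{\overline{\mathscr{F}}_t\}$-martingale under $\overline{\mathbb{Q}}$. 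Together with the initial condition this gives $\overline{\mathbb{Q}}\in\cR$, and since $\overline{\mathbb{Q}}$ was an arbitrary cluster point, the claim follows.

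The genuine difficulty of this development is not in the present lemma, which is a packaging step, but upstream: it sits in the proofs of Lemma~\ref{martingaleproperty} and Lemma~\ref{martingalepropertyLY}, where the passage to the weak limit must be carried out against suitable countable families of bounded continuous test functionals evaluated at continuity points of the limit, and where the unbounded increments of $\widehat{L}^{h,u^h}$ are controlled through the uniform third-moment and exponential-integrability estimates of Lemma~\ref{barXsupt}. Here the only point to keep in mind is purely formal, namely that ``cluster point'' is to be read as ``weak subsequential limit'' and that each of the three conditions of Definition~\ref{defrf} must be checked individually for that limit, which is precisely what the steps above accomplish.
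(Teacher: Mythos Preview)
Your proposal is correct and follows essentially the same route as the paper's own proof: invoke Lemma~\ref{precompactRhLemma} for tightness, then Lemmas~\ref{martingaleproperty} and~\ref{martingalepropertyLY} to verify that any cluster point satisfies the martingale conditions of Definition~\ref{defrf}. If anything, your version is slightly more careful in explicitly checking the initial condition $X_0=x_0$, which the paper leaves implicit.
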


\begin{proof}
According to Lemma~\ref{precompactRhLemma}, we know that $\{ \overline{\mathbb{R}}_{h_m,{u^{h_m}}} \}_{m\in\mathbb{N}_+}$ is tight. For each cluster $\overline{\mathbb{Q}}$, by Lemma~\ref{martingaleproperty}, the process $ \left\{ C_t^f(X,q) \right\}_{0\leq t\leq T}$ defined by~\eqref{CtfB[x][z]} is a $\left\{\overline{\mathscr{G}}_T \vee \overline{\mathscr{F}}_t \right\}_{0\leq t\leq T}$ - martingale, and by Lemma~\ref{martingalepropertyLY}, the process $\left\{ D_t^{g}(X,L,Y) \right\}_{0\leq t\leq T}$ defined by~\eqref{Dtg} is a $\left\{\overline{\mathscr{F}}_t \right\}_{0\leq t\leq T}$ -martingale. We have verified that the measure $\overline{\mathbb{Q}}$ satisfies all the conditions of Definition~\ref{defrf}, and thus belongs to $\cR$.
\end{proof}

Next, by the uniqueness of the solution to the martingale problem, the convergence results can help us to show that any feasible control rule in $\cR_S^{sd}$ can be approximated by the discrete problem when $h$ tends to $zero$.

\begin{lemma}\label{constructRs0}
Let $\overline{\mathbb{R}}_S^{sd} \in \cR_S^{sd}$. Then we can construct a sequence of approximating control rules $\left\{ \overline{\mathbb{R}}_{h,{\overline{u}^h}} \right\}_{h>0}$ such that $\overline{\mathbb{R}}_{h,{\overline{u}^h}} \in \cR_h$ and $\overline{\mathbb{R}}_{h,{\overline{u}^h}} \rightarrow \overline{\mathbb{R}}_S^{sd}$ as $h\rightarrow 0$.
\end{lemma}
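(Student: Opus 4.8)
The plan is to construct, for each $h=T/n$, a discrete-time control $\overline{u}^h\in\cU_h$ that imitates the discrete step strategy underlying $\overline{\mathbb{R}}^{sd}_{S}$, then use the tightness of $\cR_h$ and the inherited martingale properties to extract relaxed cluster points, and finally identify every such cluster point with $\overline{\mathbb{R}}^{sd}_{S}$ by a uniqueness-in-law argument, so that the whole sequence converges. Write $\overline{\mathbb{R}}^{sd}_{S}=\mathbb{Q}\circ(X^{u^{sd,\kappa}},L^{u^{sd,\kappa}},Y,q^{u^{sd,\kappa}})^{-1}$ with $u^{sd,\kappa}_t=w_i^{sd,\kappa}(Y_{r^i_1},\dots,Y_{r^i_{J_i}})$ on $[i\kappa,(i+1)\kappa)$, each $w_i^{sd,\kappa}$ being uniformly Lipschitz continuous and $A$-valued and $r^i_j\le i\kappa$. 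For $h=T/n$, set $n^h_i:=\lfloor i\kappa/h\rfloor$ and $m^h_{i,j}:=\lfloor r^i_j/h\rfloor$, so that $m^h_{i,j}\le n^h_i$, $n^h_0=0$ and $n^h_N=n$, and define
$$
\overline{u}^h_k:=w_i^{sd,\kappa}\big(Y^h_{m^h_{i,1}},\dots,Y^h_{m^h_{i,J_i}}\big)\qquad\text{for}\quad n^h_i\le k<n^h_{i+1},
$$
and $\overline{u}^h_n:=\overline{u}^h_{n-1}$. Since $m^h_{i,j}\le n^h_i\le k$, each $\overline{u}^h_k$ is $\sigma(Y^h_0,\dots,Y^h_k)$-measurable, hence $\overline{u}^h\in\cU_h$; this is exactly where the \emph{discreteness} of the step strategy (dependence on finitely many observation values rather than on the path $Y|_{[0,i\kappa]}$) is used. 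Let $\overline{\mathbb{R}}_{h,{\overline{u}^h}}\in\cR_h$ be the induced control rule.

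By Lemma~\ref{precompactRhLemma} the family $\{\overline{\mathbb{R}}_{h,{\overline{u}^h}}\}_{h>0}$ is tight on $\overline{\Omega}$, and by Lemma~\ref{Rhtight} every cluster point $\overline{\mathbb{Q}}$, along any sequence $h_m\downarrow0$, belongs to $\cR$. It therefore suffices to show $\overline{\mathbb{Q}}=\overline{\mathbb{R}}^{sd}_{S}$ for each such $\overline{\mathbb{Q}}$, since then the tight sequence has a unique cluster point and hence converges. So fix $\overline{\mathbb{Q}}$ with $\overline{\mathbb{R}}_{h_m,{\overline{u}^{h_m}}}\to\overline{\mathbb{Q}}$.

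First I identify the $q$-component. Since $\widehat{Y}^h$ is constant equal to $Y^h_k$ on $[t_k,t_{k+1})$ and $r^i_j\in[t_{m^h_{i,j}},t_{m^h_{i,j}+1})$, the (piecewise-constant) control $u^{h_m}$ coincides with the path functional $v^{\widehat{Y}^{h_m}}$, where $v^y_t:=w_i^{sd,\kappa}(y_{r^i_1},\dots,y_{r^i_{J_i}})$ on $[i\kappa,(i+1)\kappa)$, outside a set of times of Lebesgue measure $O(h_m)$; hence $q^{h_m,{\overline{u}^{h_m}}}$ and $q^{v^{\widehat{Y}^{h_m}}}$ differ by $O(h_m)$ in $V$ and share the same weak cluster points. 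The map $y\mapsto q^{v^y}$ from $\mathcal{D}^k$ into $V$ is continuous at every continuous path: uniform convergence $y^m\to y$ gives $y^m_{r^i_j}\to y_{r^i_j}$, continuity of $w_i^{sd,\kappa}$ gives $v^{y^m}_s\to v^y_s$ for a.e.\ $s$, and dominated convergence (using compactness of $A$) gives weak convergence in $V$. Since, by Definition~\ref{defrf} (take $g=g(t,y)$ in the martingale property \eqref{Dtg}, whose operator $\cM$ reduces to $g_t+\tfrac12\Delta_y g$), $Y$ is a Brownian motion under $\overline{\mathbb{Q}}$ and hence a.s.\ continuous, the extended continuous mapping theorem yields $q=q^{v^Y}$ $\overline{\mathbb{Q}}$-a.s.; equivalently $q=q^{u}$ with $u_t=w_i^{sd,\kappa}(Y_{r^i_1},\dots,Y_{r^i_{J_i}})$ on $[i\kappa,(i+1)\kappa)$.

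Now $q$ is atomic and $\overline{\mathscr{G}}_T$ coincides with $\sigma(Y)$. Conditioning the martingale property \eqref{CtfB[x][z]} of Definition~\ref{defrf} on $\overline{\mathscr{G}}_T$, the regular conditional distribution $\overline{\mathbb{Q}}(\,\cdot\mid\overline{\mathscr{G}}_T)$ is, for a.e.\ realisation $y$ of $Y$, a solution of the time-inhomogeneous martingale problem for $\cL^{t,x,v^y_t}$ started at $x_0$; since $v^y$ is a piecewise-constant $A$-valued function, Assumption $(\mathbf{A1})$ makes the coefficients $b(\cdot,\cdot,v^y_\cdot)$ and $\sigma\sigma^{\mathrm{T}}(\cdot,\cdot,v^y_\cdot)$ bounded, Lipschitz in $x$ and Borel in $t$, so this martingale problem is well posed and the conditional law of $X$ given $\overline{\mathscr{G}}_T$ is uniquely determined. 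The martingale property \eqref{Dtg} then forces $L$ to solve the linear equation $L_t=1+\int_0^t p(s,X_s)L_s\,dY_s$, which has a unique (explicit, exponential) solution given the paths of $X$ and $Y$. Hence $\overline{\mathbb{Q}}$ is the unique element of $\cR$ with $X_0=x_0$, $q=q^{v^Y}$ and $Y$ a Brownian motion; since $\overline{\mathbb{R}}^{sd}_{S}\in\cR$ shares these properties, $\overline{\mathbb{Q}}=\overline{\mathbb{R}}^{sd}_{S}$, which completes the proof. I expect the identification step to be the main obstacle: one must combine the almost-sure continuity of the control functional along Brownian paths with the well-posedness of the martingale problem granted by $(\mathbf{A1})$, the latter --- i.e.\ uniqueness in law --- being what actually pins down the limit.
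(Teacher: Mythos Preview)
Your proof is correct and follows essentially the same strategy as the paper's: construct $\overline{u}^h$ by plugging the discrete observation $\widehat{Y}^h$ into the Lipschitz functionals $w_i^{sd,\kappa}$, use Lemma~\ref{Rhtight} for tightness and membership of cluster points in $\cR$, and then identify the limit via uniqueness of the associated martingale problem together with continuity of $y\mapsto q^{u^{sd,\kappa}(y)}$. The only cosmetic differences are that the paper restricts to $h\in\{\kappa/2^l\}$ (so the grids align exactly, avoiding your $O(h)$ discrepancy argument) and identifies the $(X,L,Y)$-marginal first before handling $q$, whereas you pin down $q$ first and then run the conditional martingale-problem uniqueness; both orderings lead to the same conclusion.
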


\begin{proof}
 
Let $\kappa > 0$. We suppose that $\overline{\mathbb{R}}^{sd}_{S} = \mathbb{Q} \circ \left(X^{u^{sd,\kappa}}, L^{u^{sd,\kappa}}, Y,\, q^{u^{sd,\kappa}}\right)^{-1}$, where
$$
u^{sd,\kappa}_t = u^{sd,\kappa}_t (Y) := w_m^{sd,\kappa} \left(Y_{r_j^m} \,;\, j\leq J_m,r_j^m \leq m\kappa \right),\quad\quad t\in[m\kappa,(m+1)\kappa),
$$
with $w_m^{sd,\kappa}$ being a Lipschitz continuous and bounded function on $\mathbb{R}^{k\times J_m}$.

The measure $\overline{\mathbb{R}}^{sd}_{S}\big|_{ \mathcal{D}^d \times \mathcal{D}^1 \times \mathcal{D}^k } = \mathbb{Q} \circ \left(X^{u^{sd,\kappa}}, L^{u^{sd,\kappa}}, Y \right)^{-1}$ is clearly the unique probability measure on $\mathcal{D}^d \times \mathcal{D}^1 \times \mathcal{D}^k$ under which: $X_0=x_0$; and $Y$ is a standard Brownian motion; and for every $f \in C^{\infty}_b([0,T] \times \mathbb{R}^d)$, the process
\begin{equation}\label{kappamartingale2}
f(t,X_t) - f(0,X_0) - \int_0^t \cL^{s,X_s,u^{sd,\kappa}_s (Y)} f(s,X_s)\,ds
\end{equation}
is a $\left( {\mathscr{C}}^d_t \otimes {\mathscr{C}}^1_t\right) \vee {\mathscr{C}}^k_T $-martingale with $\cL$ being defined by~\eqref{cLB}; and for every $g\in C_b^{\infty}([0,T]\times \mathbb{R} \times \mathbb{R}^k)$, the process
\begin{equation}\label{kappamartingale3}
g(t,L_t,Y_t)-g(0,L_0,Y_0)-\int_0^t \cM^{s,X_s,L_s}g(s,L_s,Y_s)\,ds
\end{equation}
is a ${\mathscr{C}}^d_t \otimes {\mathscr{C}}^1_t \otimes {\mathscr{C}}^k_t $-martingale with $\cM$ being defined by~\eqref{cML}.

For $h\in\left\{ \frac{\kappa}{2^l} : l\in\mathbb{N}_+\right\}$, we define
$$
\overline{u}_t^h = u^{sd,\kappa}_t \left(\widehat{Y}^h\right) := {w}_m^{sd,\kappa} \left(\widehat{Y}^h_{r_j^m}\,,\, j\leq J_m \,,\, r_j^m \leq m\kappa \right),\quad\quad t\in[m\kappa,(m+1)\kappa),
$$
where $\left\{ \widehat{Y}^h_{r_j^m} \,,\, j\leq J_m \,,\, r_j^m \leq m\kappa \right\}$ is determined by $\left\{ Y^h_i \right\}_{0\leq t_i \leq m \kappa}$. Then $\overline{u}^h\in \cU_h$ with
\begin{equation}\label{pfRhuhQ}
\overline{\mathbb{R}}_{h,\overline{u}^h} = \mathbb{Q}^h \circ \left( \widehat{X}^{h,\overline{u}^h}, \widehat{L}^{h,\overline{u}^h}, {\widehat{Y}^h}, q^{h,\overline{u}^h} \right)^{-1} \in\cR_h.
\end{equation}
According to Lemma~\ref{Rhtight}, we know that $\left\{ \overline{\mathbb{R}}_{h,\overline{u}^h} \right\}_{h\in\left\{ \frac{\kappa}{2^l} : l\in\mathbb{N}_+\right\}}$ is tight. Suppose that its convergent subsequence $\overline{\mathbb{R}}_{h,\overline{u}^h} \longrightarrow \overline{\mathbb{Q}}$. Lemma~\ref{Rhtight} also tells us that $\overline{\mathbb{Q}} \in \cR$.

Then by the same arguments as in proving~\eqref{hmartingale}, together with the uniqueness of the solution to the martingale problem associated to~\eqref{kappamartingale2} and~\eqref{kappamartingale3}, we have 
\begin{equation}\label{pflemmarhuhddk}
\overline{\mathbb{R}}_{h,\overline{u}^h}\big|_{ \mathcal{D}^d \times \mathcal{D}^1 \mathcal{D}^k} \longrightarrow \overline{\mathbb{R}}^{sd}_{S}\big|_{ \mathcal{D}^d \times \mathcal{D}^1 \times \mathcal{D}^k }.
\end{equation}

According to~\eqref{pfRhuhQ}, it holds that for any $\Phi \in C_b(\overline{\Omega})$,
\be\nonumber
\displaystyle \mathbb{E}^{\overline{\mathbb{R}}_{h,\overline{u}^h}} \left[ \Phi \left(X,L,Y,q\right) \right] &=&\displaystyle \mathbb{E}^{\mathbb{Q}^h} \left[ \Phi \left( \widehat{X}^{h,\overline{u}^h}, \widehat{L}^{h,\overline{u}^h}, {\widehat{Y}^h}, q^{h,\overline{u}^h} \right) \right] = \mathbb{E}^{\mathbb{Q}^h} \left[ \Phi \left(\widehat{X}^{h,\overline{u}^h}, \widehat{L}^{h,\overline{u}^h}, {\widehat{Y}^h}, q^{{u}^{sd,\kappa}(\widehat{Y}^h)}\right) \right] \\\nonumber
&=&\displaystyle \mathbb{E}^{\overline{\mathbb{R}}_{h,\overline{u}^h}} \left[ \Phi \left(X,L,Y, q^{{u}^{sd,\kappa}(Y)}\right) \right] = \mathbb{E}^{\overline{\mathbb{R}}_{h,\overline{u}^h}} \left[ \Phi \left(X,L,Y,\delta_{{u}_t^{sd,\kappa}(Y)}(da)dt \right) \right].
\ee
Since the functions $\left\{{w}_m^{sd,\kappa}\right\}$ are Lipschitz continuous, by~\eqref{pflemmarhuhddk} we have
\be\nonumber
\displaystyle \mathbb{E}^{\overline{\mathbb{Q}} } \left[ \Phi(X,L,Y,q) \right] &=&\displaystyle \lim_{h\rightarrow 0} \mathbb{E}^{\overline{\mathbb{R}}_{h,\overline{u}^h}} \left[ \Phi(X,L,Y,q) \right] = \lim_{h\rightarrow 0} \mathbb{E}^{\overline{\mathbb{R}}_{h,\overline{u}^h}} \left[ \Phi \left(X,L,Y,\delta_{{u}_t^{sd,\kappa}(Y)}(da)dt \right) \right]  \\\nonumber
&=&\displaystyle  \mathbb{E}^{\overline{\mathbb{R}}^{sd}_{S}} \left[ \Phi \left(X,L,Y,\delta_{{u}_t^{sd,\kappa}(Y)}(da)dt \right) \right] = \mathbb{E}^{\overline{\mathbb{R}}^{sd}_{S}} \left[ \Phi \left(X,L,Y,q \right) \right].
\ee
Thus $ \overline{\mathbb{Q}} = \overline{\mathbb{R}}^{sd}_{S}$, and we have $\overline{\mathbb{R}}_{h,\overline{u}^h} \longrightarrow \overline{\mathbb{R}}^{sd}_{S}$ as $h\rightarrow 0$. This completes the proof.
\end{proof}

Finally, we are ready to prove our main result as follows.

\noindent \textbf{Proof of Theorem~\ref{ConvergenceTheorem}}. Note that $\Gamma\in \overline{L}^1_*$ by Lemma~\ref{GammainL1*}. In view of Lemma~\ref{L1*convergence} and Lemma~\ref{Rhtight}, we have
\begin{equation}\nonumber
\varlimsup_{h\rightarrow 0} \,\,\sup_{\overline{\mathbb{R}}_{h,{u^h}} \in \cR_h} \mathbb{E}^{\overline{\mathbb{R}}_{h,{u^h}}} \left[ \Gamma \right] \leq \sup_{\overline{\mathbb{R}} \in \cR} \mathbb{E}^{\overline{\mathbb{R}}} \left[ \Gamma \right]=V_R.
\end{equation}

According to Lemma~\ref{constructRs0}, we have
\begin{equation}\nonumber
\varliminf_{h\rightarrow 0} \,\,\sup_{\overline{\mathbb{R}}_{h,{u^h}} \in \cR_h} \mathbb{E}^{\overline{\mathbb{R}}_{h,{u^h}}} \left[ \Gamma \right] \geq \sup_{\overline{\mathbb{R}}^{sd}_{S} \in \cR^{sd}_{S}} \mathbb{E}^{\overline{\mathbb{R}}^{sd}_{S}} \left[ \Gamma \right] =V^{sd}_{S}.
\end{equation}

By~\eqref{defbarVhcanonical} and Proposition~\ref{equalRSsd}, we get
$$
V_R=V^{sd}_{S}\leq \varliminf_{h\rightarrow 0} {V}_h \leq \varlimsup_{h\rightarrow 0} {V}_h \leq V_R.
$$
Thus $V_S = V_R = \lim\limits_{h\rightarrow 0} {V}_h$ according to~\eqref{equalSWR}. This completes the proof of Theorem~\ref{ConvergenceTheorem}.

\section*{Acknowledgments} The research of Xiaolu Tan is supported by Hong Kong RGC General Research Fund (project 14302921).

\bibliographystyle{siam}
\bibliography{references}

\end{document}